\documentclass[11pt,a4paper,dvipsnames]{scrartcl}
\usepackage{amsmath,amsthm,amssymb,url}

\usepackage[english]{babel}
\usepackage{geometry}\geometry{margin=1in} %

\usepackage[backend=biber, giveninits=true, style=ieee, dashed=false, sorting = nty,citestyle=numeric, sortcites]{biblatex}

\addbibresource{refs.bib}
\AtEveryBibitem{\clearfield{doi} 
\clearfield{month}
\clearfield{day}
\clearfield{issn}}

\usepackage{bm}
\usepackage{microtype}
\usepackage{mathtools}
\usepackage{csquotes}

\linespread{1.03}

\usepackage{enumitem}
\setlist[enumerate]{itemsep=0mm,parsep=2mm,topsep=5pt,label=\textit{(\alph*)}}
\setlist[itemize]{itemsep=1mm,parsep=2mm,topsep=6pt}

\usepackage{pgf,tikz,pgfplots}
\pgfplotsset{compat=1.17}
\usetikzlibrary{shapes.geometric}
\usetikzlibrary{cd}
\usetikzlibrary{patterns}
\usetikzlibrary{decorations.pathreplacing}
\usepackage{tkz-euclide}

\usepackage{hyperref}
\hypersetup{
linkcolor  = black,
citecolor  = black,
colorlinks = true,
}
\usepackage[capitalise, noabbrev, nameinlink, nosort]{cleveref}
\crefname{equation}{}{}
\crefname{conjecture}{Conjecture}{Conjectures}

\AddToHook{env/lemma/begin}{\crefalias{theorem}{lemma}}
\AddToHook{env/conjecture/begin}{\crefalias{theorem}{conjecture}}
\AddToHook{env/proposition/begin}{\crefalias{theorem}{proposition}}

\theoremstyle{definition}
\newtheorem{theorem}{Theorem}[section]
\newtheorem{proposition}[theorem]{Proposition}
\newtheorem{lemma}[theorem]{Lemma}

\newtheorem{conjecture}[theorem]{Conjecture}

\newcommand{\rat}{{\mathbb{Q}}}

\newcommand{\R}{{\mathbb{R}}}
\newcommand{\Rd}{{\mathcal{R}}_d}

\newcommand{\cR}{{\mathcal{R}}}
\newcommand{\sm}{-}%

\DeclareMathOperator{\rk}{rk}

\newcommand{\sharedstressnullity}[2]{k_{#1}(#2)}

\newcommand{\sharedstressrank}[2]{s_{#1}(#2)}

\begin{document}
\title{Sparsity,
Stress-Independence and  Globally Linked Pairs in Graph Rigidity Theory}

\author{D\'aniel Garamv\"olgyi\thanks{
HUN-REN Alfréd Rényi Institute of Mathematics, Reáltanoda utca 13-15, Budapest, 1053, Hungary, and the HUN-REN-ELTE Egerv\'ary Research Group
on Combinatorial Optimization, P\'azm\'any P\'eter s\'et\'any 1/C, 1117 Budapest, Hungary.
e-mail: \texttt{daniel.garamvolgyi@ttk.elte.hu}}
\and
Bill Jackson\thanks{Queen Mary University of London, London, E1 4NS, UK. e-mail: \texttt{b.jackson@qmul.ac.uk}}
\and
Tibor Jord\'an\thanks{Department of Operations Research, ELTE E\"otv\"os Lor\'and University, and the HUN-REN-ELTE Egerv\'ary Research Group
on Combinatorial Optimization, P\'azm\'any P\'eter s\'et\'any 1/C, 1117 Budapest, Hungary.
e-mail: \texttt{tibor.jordan@ttk.elte.hu}}
}

\date{}
\maketitle

\begin{abstract}
      A graph 
      is $\mathcal{R}_d$-independent (resp.\ $\cR_d$-connected) if its $d$-dimensional generic rigidity matroid is free (resp.\ connected). A result of Maxwell from 1867 implies that every $\mathcal{R}_d$-independent graph  satisfies the sparsity condition $|E(H)|\leq d|V(H)|-\binom{d+1}{2}$ for all subgraphs $H$  with at least $d+1$ vertices. Several other families of graphs $G$ arising naturally in rigidity theory, such as minimally globally $d$-rigid graphs,  are known to satisfy the bound $|E(G)|\leq (d+1)|V(G)|-\binom{d+2}{2}$. We unify and extend these results by considering
      the family of $d$-stress-independent graphs which 
      includes many of these families. We show that every $d$-stress-independent graph is $\mathcal{R}_{d+1}$-independent.  A key ingredient in our proofs is the concept of $d$-stress-linked pairs of vertices. 
      We derive a new sufficient condition for $d$-stress linkedness and use it to obtain a similar condition for a pair of vertices of a graph to be globally $d$-linked. This result 
      strengthens a result of Tanigawa on globally $d$-rigid graphs. We also show that every minimally $\cR_d$-connected  graph $G$ is $\mathcal{R}_{d+1}$-independent and that the only subgraphs of $G$ that can satisfy Maxwell's criterion for $\mathcal{R}_{d+1}$-independence with equality are copies of $K_{d+2}$. Our results give affirmative answers to two conjectures
      in graph rigidity theory.
\end{abstract}

\section{Introduction}

In this paper we investigate the relationship between rigidity and global rigidity of graphs, as well as unify and extend known sparsity results for various graph classes appearing in combinatorial rigidity theory. A graph is \emph{$d$-rigid} (resp.\ \emph{globally $d$-rigid}) if its generic realisations in $\R^d$ cannot be deformed continuously (resp.\ arbitrarily) such that the edge lengths are preserved. (Precise definitions are given in the next section.)   
Rigidity and global rigidity are both well-studied graph properties with a number of applications, both within and outside mathematics. From a combinatorial perspective, $d$-rigidity can be seen as a matroidal analogue of $d$-connectivity. Indeed, it is well-known that every $d$-rigid graph on at least $d+1$ vertices is $d$-connected, and that the edge sets of $d$-rigid graphs on $n$ vertices are the spanning sets of the \emph{$d$-dimensional generic rigidity matroid} $\mathcal{R}_d(K_n)$ of the complete graph $K_n$.

In contrast, there is no matroid underlying global rigidity, which makes the combinatorial study of globally $d$-rigid graphs more challenging. To circumvent this difficulty, one can try to find necessary and sufficient conditions for global $d$-rigidity in terms of $d$-rigidity.
It is immediate that every globally $d$-rigid graph is $d$-rigid, and by a seminal result of Hendrickson~\cite{hendrickson_1992}, globally $d$-rigid graphs on at least $d+2$ vertices are also \emph{redundantly $d$-rigid}, which means that they remain rigid after the removal of any edge. In the other direction, we have the following key result of Tanigawa~\cite{tanigawa_2015}. We say that a $d$-rigid graph $G$ is \emph{vertex-redundantly $d$-rigid} if $G-v$ is $d$-rigid for every vertex $v$ of $G$. 

\begin{theorem}\label{theorem:shinichi}\cite{tanigawa_2015}
      Every vertex-redundantly $d$-rigid graph is globally $d$-rigid. 
\end{theorem}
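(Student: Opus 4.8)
The plan is to reduce to a stress-matrix certificate and then use the vertex-deletions to rule out a nontrivial kernel vector. Global $d$-rigidity is trivial when $|V(G)|\le d+1$ (there $G=K_n$), so assume $n:=|V(G)|\ge d+2$ and fix a generic $p\colon V(G)\to\R^d$. By Connelly's well-known sufficient condition it suffices to exhibit an equilibrium stress of $(G,p)$ whose stress matrix has the maximum possible rank $n-d-1$. Writing $U=\langle \mathbf 1,p^{(1)},\dots,p^{(d)}\rangle$ for the $(d+1)$-dimensional space that lies in $\ker\Omega(\omega)$ for every equilibrium stress $\omega$ (here $p^{(k)}$ is the vector of $k$-th coordinates), the common kernel $W=\bigcap_\omega\ker\Omega(\omega)$ always contains $U$, and a maximum-rank stress matrix has kernel exactly $W$; hence its rank is $n-\dim W$, which equals $n-d-1$ iff $W=U$. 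So I suppose for contradiction that there is some $z\in W\setminus U$.

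The key move is to read $z$ as an extra coordinate. Set $p'=(p,z)\colon V(G)\to\R^{d+1}$. Since $(\Omega(\omega)z)_i=\sum_{j\sim i}\omega_{ij}(z_i-z_j)$, the condition $\Omega(\omega)z=0$ is exactly the equilibrium condition for $\omega$ in the $(d+1)$-st coordinate of $(G,p')$. Thus $z\in W$ says precisely that every equilibrium stress of $(G,p)$ in $\R^d$ is also an equilibrium stress of $(G,p')$ in $\R^{d+1}$; the reverse inclusion is automatic, so the two stress spaces coincide. Comparing dimensions gives $\rk R_{d+1}(G,p')=\rk R_d(G,p)=dn-\binom{d+1}{2}$, the last equality because $G$ is $d$-rigid and $p$ is generic. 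As $R_d(G,p)$ is obtained from $R_{d+1}(G,p')$ by deleting the $n$ vertical columns, equality of ranks means these vertical columns lie in the span of the horizontal ones. Consequently, for every $b\in\R^n$ there is an $a$ with $(a,b)$ an infinitesimal flex of $(G,p')$ in $\R^{d+1}$: every assignment of vertical velocities extends to a flex.

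Now I feed in vertex-redundancy. Fix a vertex $v$ and apply the previous step with $b=e_v$, obtaining a flex $(a,e_v)$ of $(G,p')$. On $G-v$ the vertical velocities vanish, so there the edge equations read $(p_i-p_j)\cdot(a_i-a_j)=0$; that is, $a|_{V-v}$ is an infinitesimal flex of $(G-v,p)$ in $\R^d$. Since $G-v$ is $d$-rigid and $p$ is generic, $(G-v,p)$ is infinitesimally rigid, so this restriction is a trivial flex $x\mapsto Sx+t$. Subtracting this motion from $a$ yields a flex of $(G,p')$ in which only $v$ moves, with horizontal velocity $\tilde a_v\in\R^d$ and vertical velocity $1$; the star equations become $(p_v-p_j)\cdot\tilde a_v=z_j-z_v$ for all $j\sim v$. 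Doing this at every vertex produces vectors $\tilde a_v$, and adding the equations of an edge $vj$ from both endpoints gives $(p_v-p_j)\cdot(\tilde a_v-\tilde a_j)=0$ for every edge; thus $\tilde a=(\tilde a_v)_v$ is itself an infinitesimal flex of $(G,p)$ in $\R^d$, hence trivial: $\tilde a_v=Sp_v+t$ for a fixed skew-symmetric $S$ and $t\in\R^d$.

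Substituting back, $z_j-z_v=(p_v-p_j)\cdot(Sp_v+t)=(p_v-p_j)\cdot t-p_j^{\top}Sp_v$ along every edge. Summing around any cycle, the left-hand side and the $t$-term telescope to zero, leaving a relation that, by genericity of $p$, forces $S=0$ (already on a single triangle, and otherwise on the many cycles of the dense graph $G$). With $S=0$ we get $z_j+p_j\cdot t=z_v+p_v\cdot t$ across edges, so $z_i=c-p_i\cdot t$ for a constant $c$; that is, $z$ is an affine function of $p$, whence $z\in U$, contradicting $z\in W\setminus U$. I expect the conceptual crux to be the equivalence in the second paragraph: recognising that a common stress-kernel vector is exactly a height function making the $(d+1)$-dimensional lift share all the stresses of $(G,p)$, which is what turns ``extra kernel'' into ``free vertical flexes''. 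The only other step needing care is the genericity argument in the last paragraph that kills the skew part $S$.
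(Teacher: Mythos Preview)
Your approach is genuinely different from the paper's. The paper does not give a self-contained proof of \cref{theorem:shinichi}; instead it derives the stronger \cref{corollary:shinichistronger} from \cref{theorem:vertexredundantlylinkedstronger}, whose proof works stress-by-stress: given $\omega\in S(G+uv,p)$, for each vertex $z$ with $\{u,v\}$ $d$-linked in $G-z$ it finds a circuit through $uv$ avoiding $z$, subtracts the corresponding circuit stress to land in $S(G,p)\subseteq S(G,q)$, and then invokes \cref{lemma:almoststress}. Your argument instead attacks the shared stress kernel $W$ directly, interprets an extra kernel vector $z$ as a height coordinate, and uses vertex-redundancy to produce the local flexes $\tilde a_v$. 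The lifting step and the reduction to a single trivial flex $\tilde a_v = S'p_v + t'$ are correct and elegant.

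The gap is in the last paragraph. After substituting, each edge $vj$ yields
\[
  z_j - z_v \;=\; (p_v-p_j)\cdot t' \;-\; p_j^{\top} S' p_v,
\]
and summing around a cycle indeed leaves the single linear constraint $\sum_i p_{v_{i+1}}^{\top} S' p_{v_i}=0$ on $S'$. But for $d\ge 3$ the space of skew-symmetric $d\times d$ matrices has dimension $\binom{d}{2}>1$, so one cycle (triangle or otherwise) cannot force $S'=0$; your remark ``already on a single triangle'' is only valid for $d\le 2$. The fallback ``otherwise on the many cycles of the dense graph $G$'' is exactly what needs to be proved: you must show that for generic $p$ the linear map $S'\mapsto\bigl(\sum_{(v\to j)\in C} p_j^{\top}S'p_v\bigr)_C$ from $\mathrm{Skew}(d)$ to the cycle space of $G$ is injective. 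This is a graph-dependent statement, and vertex-redundantly $d$-rigid graphs can be bipartite (for instance $K_{4,n-4}$ is vertex-redundantly $2$-rigid for $n\ge 8$), so there may be no triangles at all. Note also that you cannot simply invoke genericity of $p$ against a ``free'' $S'$: the matrix $S'$ is produced by your construction and depends on $p$, so you first need the Zariski-open statement that the above map is injective for \emph{some} $p$, and that requires an honest argument using the structure of $G$. Until this injectivity is established, the proof is incomplete for $d\ge 3$.
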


We can use \cref{theorem:shinichi} and standard techniques in rigidity theory to deduce the following ``dimension dropping'' result.

\begin{theorem}\label{theorem:rigidgloballyrigid}\cite{Jext}
      Every $(d+1)$-rigid graph is globally $d$-rigid.
\end{theorem}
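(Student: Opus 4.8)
The plan is to derive \cref{theorem:rigidgloballyrigid} from Tanigawa's theorem (\cref{theorem:shinichi}) by showing that every $(d+1)$-rigid graph is in fact vertex-redundantly $d$-rigid. Since the statement is trivial when $|V(G)| \le d+1$ (where $(d+1)$-rigidity forces $G$ to be complete, and complete graphs are globally rigid in every dimension), I would assume $|V(G)| \ge d+2$ throughout. The crux is the following dimension-dropping claim: if $G$ is $(d+1)$-rigid, then $G-v$ is $d$-rigid for every vertex $v$.

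To prove the claim I would invoke Whiteley's coning theorem, a standard tool relating rigidity in consecutive dimensions: a graph $H$ is $d$-rigid if and only if its cone $H*w$ --- obtained by adding a new apex vertex $w$ joined to every vertex of $H$ --- is $(d+1)$-rigid. Fix a vertex $v$ of $G$ and set $H = G-v$. In $G$ the vertex $v$ is joined only to a subset of $V(H)$, whereas in $H*v$ it is joined to all of $V(H)$, so $G$ is a spanning subgraph of the cone $H*v$. As adding edges preserves rigidity, the $(d+1)$-rigidity of $G$ implies that $H*v$ is $(d+1)$-rigid, and hence, by the coning theorem, $H = G-v$ is $d$-rigid. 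Since $v$ was arbitrary, this establishes the claim.

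It then remains to check that $G$ itself is $d$-rigid, so that the definition of vertex-redundant $d$-rigidity is fully met. Here I would use the standard fact that a $(d+1)$-rigid graph on at least $d+2$ vertices has minimum degree at least $d+1$. Choosing any vertex $v$, the graph $G-v$ is $d$-rigid by the claim, and $G$ is recovered from $G-v$ by adding $v$ back together with its at least $d$ incident edges; since attaching a new vertex of degree at least $d$ to a $d$-rigid graph (a Henneberg $0$-extension, possibly with additional edges) preserves $d$-rigidity, $G$ is $d$-rigid. Thus $G$ is vertex-redundantly $d$-rigid, and \cref{theorem:shinichi} yields that $G$ is globally $d$-rigid.

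I expect the main conceptual point to be the correct application of the coning theorem --- in particular, recognising that $G$ embeds as a spanning subgraph of the cone over $G-v$, so that one only needs monotonicity of rigidity under edge addition rather than any delicate counting argument. The remaining ingredients (the minimum-degree bound and the $0$-extension step) are routine facts from the combinatorial theory of the generic rigidity matroid.
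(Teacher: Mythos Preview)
Your proposal is correct and follows essentially the same route the paper indicates: the paper states that \cref{theorem:rigidgloballyrigid} follows from \cref{theorem:shinichi} together with ``standard techniques in rigidity theory,'' and those techniques are precisely the coning argument you give (indeed, the paper uses exactly the same $G\subseteq (G-v)*v$ trick in the proofs of \cref{lem:bridges}(b) and \cref{lemma:d+1linked}). Your additional care with the base case $|V|\le d+1$ and the verification that $G$ itself is $d$-rigid are routine details the paper leaves implicit.
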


We may also compare minimally $d$-rigid and minimally globally $d$-rigid graphs.
Minimally $d$-rigid graphs on $n$ vertices correspond to the bases of the $d$-dimensional generic rigidity matroid $\mathcal{R}_d(K_n)$. In particular, they are \emph{$\mathcal{R}_d$-independent}, which means that their edge set is independent in $\mathcal{R}_d(K_n)$, and for $n \geq d+1$, they have exactly $dn - \binom{d+1}{2}$ edges.  

Again, global $d$-rigidity does not have an associated matroid, and hence the structure of minimally globally $d$-rigid graphs is less understood. It follows from Hendrickson's result that a globally $d$-rigid graph on at least $d+2$ vertices cannot be minimally $d$-rigid. On the other hand, the following recent result implies that a $(d+1)$-rigid graph on at least $d+3$ vertices cannot be minimally globally $d$-rigid, simply because it has too many edges.

\begin{theorem}\cite{GJ}\label{theorem:minimallygloballyrigid}
      Let $G = (V,E)$ be a minimally globally $d$-rigid graph on at least $d+2$ vertices. Then 
      \begin{equation*}
            |E| \leq (d+1)|V| - \binom{d+2}{2},
      \end{equation*}
      where equality holds if and only if
$G=K_{d+2}$.
\end{theorem}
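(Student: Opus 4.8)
The plan is to prove the inequality by showing that the edge set of a minimally globally $d$-rigid graph $G=(V,E)$ on $n\ge d+2$ vertices is independent in the $(d+1)$-dimensional generic rigidity matroid $\mathcal{R}_{d+1}(K_n)$. Maxwell's sparsity bound, applied in dimension $d+1$ to the subgraph $H=G$ (which has at least $d+2$ vertices), then yields $|E|\le (d+1)n-\binom{d+2}{2}$ at once. So the whole content of the inequality is the $\mathcal{R}_{d+1}$-independence of $E$, and I would argue this by contradiction.

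If $E$ is dependent in $\mathcal{R}_{d+1}(K_n)$, then it contains an $\mathcal{R}_{d+1}$-circuit, and hence there is an edge $e=uv\in E$ with $e\in\cl_{d+1}(E\sm e)$. By the standard description of the closure operator of the generic rigidity matroid, this means that $G\sm e$ contains a $(d+1)$-rigid subgraph $H=(W,F)$ with $u,v\in W$. Since $e\notin F$ while both endpoints of $e$ lie in $W$, necessarily $|W|\ge d+2$ (a $(d+1)$-rigid graph on at most $d+1$ vertices is complete). By \cref{theorem:rigidgloballyrigid}, $H$ is globally $d$-rigid.

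The key step is then a gluing argument showing that $G\sm e$ is itself globally $d$-rigid, contradicting minimality. I would fix a generic $p\colon V\to\R^d$, so that its restriction $p|_W$ is generic and $(H,p|_W)$ is a globally $d$-rigid framework. Let $(G\sm e,q)$ be any framework equivalent to $(G\sm e,p)$. Then $q|_W$ and $p|_W$ are equivalent as realisations of $H$ (every edge of $F$ lies in $E\sm e$), so by global rigidity of $H$ there is an isometry $\gamma$ of $\R^d$ with $\gamma(q_w)=p_w$ for all $w\in W$; in particular $\|q_u-q_v\|=\|p_u-p_v\|$. Hence $(G,\gamma q)$ and $(G,p)$ agree on \emph{every} edge of $E$, including $e$, and global $d$-rigidity of $G$ forces $\gamma q$, and therefore $q$, to be congruent to $p$. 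Thus every framework equivalent to the generic $(G\sm e,p)$ is congruent to it, i.e.\ $G\sm e$ is globally $d$-rigid, the desired contradiction. I expect this gluing to be routine; the only points demanding care are the closure-to-rigid-subgraph translation and the fact that $p|_W$ inherits genericity.

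For the equality case, note that $|E|=(d+1)n-\binom{d+2}{2}$ together with $\mathcal{R}_{d+1}$-independence means $E$ is a basis of $\mathcal{R}_{d+1}(K_n)$, i.e.\ $G$ is minimally $(d+1)$-rigid (isostatic in dimension $d+1$); moreover $G$ is redundantly $d$-rigid by Hendrickson's theorem~\cite{hendrickson_1992}. It then suffices to show that a graph that is simultaneously minimally globally $d$-rigid and minimally $(d+1)$-rigid must equal $K_{d+2}$. For $n=d+2$ this is immediate, since $\binom{d+2}{2}$ edges on $d+2$ vertices forces $G=K_{d+2}$. For $n\ge d+3$ I would proceed by induction on $n$, choosing a vertex $v$ of minimum degree (which lies in $[d+1,2d+1]$ by the isostatic count together with the $(d+1)$-connectivity of globally rigid graphs): in the case $\deg(v)=d+1$ one checks that $G\sm v$ is again minimally $(d+1)$-rigid and, using that a vertex of degree at least $d+1$ can be re-attached without destroying global rigidity, that $G\sm v$ is in fact minimally globally $d$-rigid, so the inductive hypothesis applies. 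The main obstacle, and the part I expect to be genuinely hard, is twofold: ruling out minimum degree $\ge d+2$ (which the edge count alone does not forbid for large $n$), and, even when $\deg(v)=d+1$, showing that the resulting boundary graph ($K_{d+2}$ together with a degree-$(d+1)$ vertex) is \emph{not} minimally globally $d$-rigid. Both require establishing global $d$-rigidity of specific graphs that fail to be $(d+1)$-rigid, so \cref{theorem:rigidgloballyrigid} no longer suffices, and one must instead invoke the finer stress-matrix criteria for global rigidity or the globally linked pair techniques developed later in the paper.
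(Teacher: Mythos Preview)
First, note that \cref{theorem:minimallygloballyrigid} is quoted from \cite{GJ} and is not proved in this paper. What the paper \emph{does} prove is your main lemma, namely that every minimally globally $d$-rigid graph is $\mathcal{R}_{d+1}$-independent (\cref{theorem:globallyrigidindependent}); so it makes sense to compare your argument with the paper's route to that statement.

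Your plan is the right one, but your proof of $\mathcal{R}_{d+1}$-independence has a genuine gap. You claim that $e=uv\in\cl_{d+1}(E\sm e)$ forces $G-e$ to contain a $(d+1)$-rigid subgraph $H$ with $u,v\in V(H)$. This ``closure equals lying in a common rigid subgraph'' description of $\cl_{d+1}$ is valid only for $d+1\le 2$ (it relies on every $\mathcal{R}_2$-circuit being $2$-rigid). For $d+1\ge 3$ it fails: the double banana $B$ (the $2$-sum of two copies of $K_5$, cf.\ \cref{lemma:Mconnected2sum}) is an $\mathcal{R}_3$-circuit that is not $3$-rigid; for any edge $ua$ of $B$ with $u$ a hinge vertex and $a$ not, the pair $\{u,a\}$ is $3$-linked in $B-ua$, yet $B-ua$ contains no $3$-rigid subgraph through both $u$ and $a$ (a short degree count rules out every candidate vertex set). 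So your argument only goes through for $d=1$.

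What you really need is the implication ``$\{u,v\}$ $(d+1)$-linked in $G-uv$ $\Rightarrow$ $\{u,v\}$ globally $d$-linked in $G-uv$''. This is exactly \cref{theorem:linkedgloballylinked}, which was conjectured in \cite{GJ} and is one of the main results of the present paper. The paper proves it via the stress-linked machinery: $(d+1)$-linked $\Rightarrow$ vertex-redundantly $d$-linked (\cref{lemma:d+1linked}, using coning) $\Rightarrow$ $d$-stress-linked (\cref{theorem:vertexredundantlylinkedstronger}, using the almost-stress \cref{lemma:almoststress}) $\Rightarrow$ globally $d$-linked (\cref{theorem:stresslinked}(a)). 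With this in hand your contradiction is immediate, and this is precisely how the paper derives \cref{theorem:globallyrigidindependent} (via \cref{cor:d-stress} and \cref{theorem:mainstrongerstress}(a)).

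For the equality case, the paper does not reprove it; it is imported from \cite{GJ} and used as a black box (e.g.\ in the proof of \cref{theorem:mainstrongerstress}(b)). Your inductive sketch is honest about its own obstacles, and those obstacles are real: neither ruling out minimum degree $\ge d+2$ nor handling $K_{d+2}$ with a pendant degree-$(d+1)$ vertex follows from the tools you have assembled.
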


The first main result of this paper is the following direct connection between global rigidity in dimension $d$ and rigidity in dimension $d+1$.

\begin{theorem}\label{theorem:globallyrigidindependent}
      Every minimally globally $d$-rigid graph is $\mathcal{R}_{d+1}$-independent.
\end{theorem}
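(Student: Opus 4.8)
The plan is to recast $\mathcal{R}_{d+1}$-independence as the vanishing of equilibrium stresses and then to exploit the stress-matrix certificate for global $d$-rigidity. Recall that a graph on $n$ vertices is $\mathcal{R}_{d+1}$-independent precisely when a generic realisation in $\R^{d+1}$ admits no nonzero equilibrium stress. I would first record the standard lifting correspondence: given a generic $p\in\R^{dn}$ and generic heights $z\in\R^{n}$, the realisation $\hat p$ in $\R^{d+1}$ with $\hat p_i=(p_i,z_i)$ is generic, and a stress of $(G,\hat p)$ is exactly an equilibrium stress $\omega$ of $(G,p)$ that in addition satisfies $\Omega_\omega z=0$, where $\Omega_\omega$ denotes the stress matrix of $\omega$. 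Writing $S$ for the space of equilibrium stresses of the generic $d$-dimensional framework $(G,p)$, the goal reduces to finding a single height vector $z$ for which $\omega\mapsto\Omega_\omega z$ is injective on $S$; equivalently, $G$ is $\mathcal{R}_{d+1}$-independent if and only if $\bigcup_{0\neq\omega\in S}\ker\Omega_\omega\neq\R^{n}$.

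Next I would feed in global rigidity and minimality. Every stress matrix annihilates the $(d+1)$-dimensional space $L=\operatorname{span}(p^{(1)},\dots,p^{(d)},\mathbf{1})$, so it is natural to pass to the quotient $W=\R^{n}/L$ of dimension $n-d-1$. By the standard stress-matrix characterisation of generic global $d$-rigidity, $G$ carries a stress $\bar\omega$ whose stress matrix has the maximum possible rank $n-d-1$, i.e.\ $\ker\Omega_{\bar\omega}=L$; thus $\bar\omega$ already fails to survive a generic lift, and the same holds for a generic $\omega\in S$ by lower semicontinuity of rank. On the counting side, \cref{theorem:minimallygloballyrigid} gives $|E|\le (d+1)|V|-\binom{d+2}{2}$, whence $\dim S=|E|-\big(dn-\binom{d+1}{2}\big)\le n-d-1=\dim W$. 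The degenerate cases are immediate: for $n\le d+1$ global $d$-rigidity forces $G=K_n$, and the equality case of \cref{theorem:minimallygloballyrigid} is $G=K_{d+2}$; in both cases $G$ is a simplex in dimension $d+1$ and hence $\mathcal{R}_{d+1}$-independent.

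With these in hand I would attack the bad locus $\bigcup_{0\neq\omega\in S}\ker\Omega_\omega$ directly. Since a generic stress in $S$ has a maximal-rank stress matrix, only the proper determinantal subvariety $D\subset\mathbb{P}(S)$ of stresses with $\rk\Omega_\omega<n-d-1$ contributes a nontrivial kernel in $W$, so the bad locus equals $\{0\}\cup\bigcup_{\omega\in D}\ker\Omega_\omega$. The strategy is to bound the dimension of this set by $\dim D$ plus the generic kernel dimension along $D$, and to show the total stays strictly below $n-d-1$, so that a generic height vector escapes it and kills every stress simultaneously.

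The hard part will be controlling the deep strata of $D$: stresses whose stress matrices have rank far below $n-d-1$ have correspondingly large kernels, and a crude dimension count over $D$ need not close. Beating this down is exactly where minimality must be used beyond the edge bound. I expect to need the $d$-stress-linked-pair analysis to show that in a minimally globally $d$-rigid graph the low-rank stress locus is small --- intuitively, deleting any edge destroys a globally linked pair, which should force each edge into the support of the relevant stresses and thereby limit how far the rank of a stress matrix can drop. Turning this local, per-pair information into a bound on $\dim\big(\bigcup_{\omega\in D}\ker\Omega_\omega\big)$, and thus certifying that some lift leaves no surviving stress, is the crux of the argument.
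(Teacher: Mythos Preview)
Your reduction to showing that the bad locus $B=\bigcup_{0\neq\omega\in S}\ker\Omega_\omega$ is proper in $\R^n$ is correct, as is the observation that the generic stratum contributes a set of dimension at most $(\dim S-1)+(d+1)\le n-1$. But the gap you flag in the final paragraph is real and is not closed by anything in the proposal. The two ingredients you actually invoke --- the bound $\dim S\le n-d-1$ from \cref{theorem:minimallygloballyrigid} and the existence of a maximal-rank stress from Gortler--Healy--Thurston --- together do \emph{not} force $\mathcal{R}_{d+1}$-independence. For $d=1$, take $K_4$ on $\{1,2,3,4\}$ together with the path $4,5,6,1$: this graph is $2$-connected (hence globally $1$-rigid, so its generic stress matrix has rank $n-2=4$), has $|E|=9=2\cdot 6-3$ (so $\dim S=4=n-d-1$), yet it contains $K_4$ and is therefore not $\mathcal{R}_2$-independent. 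Thus no stratified dimension count over $D$ can succeed using only these hypotheses; minimality must enter in a sharper way, and ``I expect to need the $d$-stress-linked-pair analysis'' is not yet an argument.

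The paper's route is quite different and never touches the rank stratification of $S$. It works edge by edge: it shows (\cref{theorem:linkedtostresslinked}, via \cref{theorem:vertexredundantlylinkedstronger} and the short \cref{lemma:almoststress}) that whenever a pair $\{u,v\}$ is $(d+1)$-linked in a graph $H$, it is already $d$-stress-linked in $H$. Since a minimally globally $d$-rigid graph $G$ is $d$-stress-independent (\cref{cor:d-stress}), the contrapositive applied to $H=G-uv$ shows that no edge $uv\in E(G)$ has $\{u,v\}$ $(d+1)$-linked in $G-uv$; equivalently, every edge of $G$ is an $\mathcal{R}_{d+1}$-bridge, and $G$ is $\mathcal{R}_{d+1}$-independent (\cref{theorem:mainstrongerstress}(a)). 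Minimality enters only through the qualitative fact that deleting any edge destroys global $d$-rigidity, not through any control on low-rank stresses.
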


Our proof of \cref{theorem:globallyrigidindependent} is based on two main ideas. The first is to prove local analogues of \cref{theorem:shinichi,theorem:rigidgloballyrigid}. Given a vertex pair $u,v \in V$, the pair $\{u,v\}$ is \emph{$d$-linked} in $G$ (resp.\ \emph{globally $d$-linked} in $G$) if for all generic embeddings $p: V \to \R^d$, each continuous (resp.\ arbitrary) deformation of $p$ that preserves the edge lengths of $G$ also preserves the distance between $p(u)$ and $p(v)$. Thus $G$ is $d$-rigid (resp.\ globally $d$-rigid) if and only if every pair of vertices of $G$ is $d$-linked (resp.\ globally $d$-linked) in $G$. 

We prove the following strengthening of \cref{theorem:shinichi}. We say that a pair of vertices $\{u,v\}$ is \emph{vertex-redundantly $d$-linked} in $G$ if $\{u,v\}$ is $d$-linked in $G-z$ for every $z \in V - \{u,v\}$.

\begin{theorem}\label{theorem:vertexredundantlylinkedgloballylinked}
      Let $G = (V,E)$ be a graph on at least three vertices and  $u,v \in V$. If $\{u,v\}$ is vertex-redundantly $d$-linked in $G$, then $\{u,v\}$ is globally $d$-linked in $G$.  
\end{theorem}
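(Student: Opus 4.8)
The plan is to argue by contradiction, adapting to a single vertex pair the averaging technique that underlies \cref{theorem:shinichi}. Suppose $\{u,v\}$ is vertex-redundantly $d$-linked in $G$ but not globally $d$-linked. Then there is a generic map $p\colon V\to\R^d$ and an equivalent map $q\colon V\to\R^d$, so that $\|p_i-p_j\|=\|q_i-q_j\|$ for every edge $ij\in E$, with $\|p_u-p_v\|\neq\|q_u-q_v\|$. Writing $s=p+q$ and $t=p-q$, the polarisation identity
\[
\langle s_i-s_j,\ t_i-t_j\rangle=\|p_i-p_j\|^2-\|q_i-q_j\|^2
\]
shows that $t$ is an infinitesimal flex of the averaged framework $(G,s)$, whereas $\langle s_u-s_v,\ t_u-t_v\rangle=\|p_u-p_v\|^2-\|q_u-q_v\|^2\neq 0$. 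Hence $(G,s)$ has an infinitesimal flex that does not infinitesimally preserve the distance between $u$ and $v$; in particular $t$ is non-trivial, as every trivial flex preserves all pairwise distances infinitesimally.

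Next I would reduce to a single vertex deletion. Say that $\{u,v\}$ is \emph{infinitesimally linked} at a framework $(H,x)$ if the rigidity-matrix row indexed by the pair $uv$ lies in the row space of $R(H,x)$, equivalently if every infinitesimal flex of $(H,x)$ infinitesimally preserves the $uv$-distance. For each $z\in V-\{u,v\}$ the restriction of $t$ to $V-z$ is an infinitesimal flex of $(G-z,s)$ that still violates the $uv$-constraint. It therefore suffices to produce a single $z\in V-\{u,v\}$ for which $\{u,v\}$ is infinitesimally linked at $(G-z,s)$, since this contradicts the existence of that flex. By hypothesis $\{u,v\}$ is $d$-linked in $G-z$, which is equivalent to $uv$ lying in the $\cR_d$-closure of $E(G-z)$, and hence to $\{u,v\}$ being infinitesimally linked at $(G-z,x)$ for every \emph{generic} configuration $x$. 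Everything thus comes down to transferring this generic statement to the specific configuration $s$.

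The hard part is precisely that $s=p+q$ need not be generic, and this is where the vertex-redundant hypothesis is essential. For each $z$ the relation defining infinitesimal linkedness in $G-z$ fails only on a proper algebraic subset $W_z$ of the configuration space of $V-z$, so the goal becomes to show that the restriction of $s$ to $V-z$ avoids $W_z$ for at least one admissible $z$. I would exploit two freedoms. First, replacing $q$ by a generic congruent copy $Rq+c$ leaves it equivalent to $p$ and still changes the $uv$-distance; combined with the genericity of $p$, this lets me take $s$ to be a generic point of the image, under the sum map $(x,y)\mapsto x+y$, of the component of the variety $\{(x,y):\|x_i-x_j\|=\|y_i-y_j\|\text{ for all }ij\in E\}$ that contains $(p,q)$, which controls exactly how special $s$ can be. Second, vertex-redundancy provides $|V|-2$ distinct deletions, and the point is that $s$ cannot be simultaneously bad for all of them. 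The genuinely delicate case arises when these averaged configurations fill out only a lower-dimensional family, as happens typically when $p$ and $q$ are non-congruent and the edge geometry of $(G,s)$ degenerates; showing that such a degenerate family is not contained in every $W_z$ is the technical heart of the proof, and it is here that having $\{u,v\}$ linked after each single deletion, rather than after one fixed deletion, is indispensable. Once a deletion $z$ with $s\notin W_z$ is found, the flex obtained above gives the desired contradiction; the remaining degenerate possibility, in which $s$ fails to affinely span $\R^d$, is handled by passing to the affine hull.
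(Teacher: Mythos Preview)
Your proposal has a genuine gap at exactly the point you label the ``technical heart.'' You correctly reduce to showing that the averaged configuration $s=p+q$ (after possibly composing $q$ with a generic congruence) avoids the bad locus $W_z$ for at least one $z\in V-\{u,v\}$, but you never supply an argument for this---only the assertion that ``$s$ cannot be simultaneously bad for all of them.'' The difficulty is real: the image of the sum map, restricted to the equivalence class of $p$, can be a low-dimensional variety, and there is no a priori reason it should avoid $\bigcap_z W_z$. Linkedness of a single pair in $G-z$ is a far weaker hypothesis than rigidity of $G-z$, and your outline does not explain how having many admissible deletions $z$, rather than one, actually rules out the bad case; you declare that it is ``indispensable'' without showing how it is used. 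As written, the proposal is a strategy sketch that stops short of the main step.

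The paper takes an entirely different route, avoiding averaged frameworks altogether. It works on the stress side: vertex-redundant $d$-linkedness is shown to imply that $\{u,v\}$ is $d$-stress-linked in $G$ (\cref{theorem:vertexredundantlylinkedstronger}), and one then invokes the implication from $d$-stress-linked to globally $d$-linked (\cref{theorem:stresslinked}(a)). The key technical ingredient is \cref{lemma:almoststress}: if $\omega$ is a stress of a general-position framework $(G,p)$, then for any $(G,q)$ the set of vertices at which the equilibrium conditions for $\omega$ fail is either empty or has size at least $d+2$. For each $z\notin\{u,v\}$, linkedness in $G-z$ supplies an $\cR_d$-circuit through $uv$ avoiding $z$; subtracting its stress from a given $\omega\in S(G+uv,p)$ lands in $S(G,p)\subseteq S(G,q)$ and shows that the equilibrium conditions for $\omega$ hold at $z$ in $(G+uv,q)$. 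Since this works for every $z$, \cref{lemma:almoststress} forces $\omega\in S(G+uv,q)$. The argument never leaves the cokernel of the rigidity matrix and so never needs to control the geometry of $q$ or of $p+q$.
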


Using standard techniques, we can turn \cref{theorem:vertexredundantlylinkedgloballylinked} into the following dimension dropping result, which was conjectured to hold in~\cite[Conjecture 4.1]{GJ}, and from which \cref{theorem:globallyrigidindependent} can be easily deduced.

\begin{theorem}\label{theorem:linkedgloballylinked}
      Let $G = (V,E)$ be a graph and  $u,v \in V$. If $\{u,v\}$ is $(d+1)$-linked in $G$, then $\{u,v\}$ is globally $d$-linked in $G$.  
\end{theorem}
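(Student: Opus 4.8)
The plan is to derive \cref{theorem:linkedgloballylinked} from \cref{theorem:vertexredundantlylinkedgloballylinked} by showing that $(d+1)$-linkedness of a pair descends to $d$-linkedness after the deletion of any single vertex; the passage between the two dimensions will be handled by Whiteley's coning theorem. First I would dispose of the degenerate case $|V|\le 2$: then $uv$ is the only possible edge, and a $(d+1)$-linked pair on two vertices must be adjacent, so $\{u,v\}$ is trivially globally $d$-linked. Thus assume $|V|\ge 3$. Throughout I use the standard matroidal description of generic local linkedness: for a graph $H$ with $u,v\in V(H)$ and $uv\notin E(H)$, the pair $\{u,v\}$ is $d$-linked in $H$ precisely when $uv\in\cl(E(H))$, where $\cl$ is the closure operator of the generic rigidity matroid $\mathcal{R}_d(K_{V(H)})$ (and $\{u,v\}$ is trivially linked when $uv\in E(H)$). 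This is the pair analogue of the generic equivalence between continuous and infinitesimal rigidity.

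The key step is the following vertex-deletion lemma: if $\{u,v\}$ is $(d+1)$-linked in $G$, then $\{u,v\}$ is $d$-linked in $G-z$ for every $z\in V-\{u,v\}$. To prove it, fix such a $z$, set $W=V-z$, and let $C=\{zx:x\in W\}$ be the complete star at $z$. Whiteley's coning theorem, in matroidal form, asserts that contracting $C$ in $\mathcal{R}_{d+1}(K_V)$ produces exactly $\mathcal{R}_d(K_W)$ (with $C$ independent and $\rk_{d+1}(F\cup C)=\rk_d(F)+|C|$ for every $F\subseteq E(K_W)$). Combining this with the identity $e\in\cl_{\mathcal{M}/C}(F)\iff e\in\cl_{\mathcal{M}}(F\cup C)$ for closures in a contraction, I obtain, for the non-cone element $uv$ (recall $u,v\neq z$), that $uv\in\cl(E(G-z))$ in $\mathcal{R}_d(K_W)$ if and only if $uv\in\cl(E(G-z)\cup C)$ in $\mathcal{R}_{d+1}(K_V)$. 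Now $E(G-z)\cup C=E((G-z)*z)$ is the cone of $G-z$ over apex $z$, and since $E(G)\subseteq E((G-z)*z)$, monotonicity of closure turns the hypothesis $uv\in\cl(E(G))$ into $uv\in\cl(E((G-z)*z))=\cl(E(G-z)\cup C)$ in $\mathcal{R}_{d+1}(K_V)$, hence into $uv\in\cl(E(G-z))$ in $\mathcal{R}_d(K_W)$. Thus $\{u,v\}$ is $d$-linked in $G-z$. As $z$ was arbitrary, $\{u,v\}$ is vertex-redundantly $d$-linked in $G$, and \cref{theorem:vertexredundantlylinkedgloballylinked} immediately gives that $\{u,v\}$ is globally $d$-linked.

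I expect the main obstacle to lie in the careful bookkeeping of the coning correspondence at the level of closures rather than ranks: one must verify that the matroidal coning theorem genuinely identifies $\mathcal{R}_{d+1}(K_V)/C$ with $\mathcal{R}_d(K_W)$ as matroids on the common ground set $E(K_W)$ (so that the edge $uv$ corresponds to itself), and one must be sure that the continuous notion of $d$-linkedness appearing in the statement coincides with membership in the rigidity-matroid closure. The remaining ingredients, namely monotonicity of closure, the contraction-closure identity, and the inclusion $E(G)\subseteq E((G-z)*z)$, are routine.
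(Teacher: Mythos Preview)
Your proposal is correct and follows essentially the same route as the paper: you prove that $(d+1)$-linkedness of $\{u,v\}$ implies $d$-linkedness in every $G-z$ via the coning theorem (this is the paper's \cref{lemma:d+1linked}), and then invoke \cref{theorem:vertexredundantlylinkedgloballylinked}. The only cosmetic difference is that you phrase the coning step in terms of matroid contraction and closures, whereas the paper uses the equivalent bridge/rank formulation of \cref{lem:bridges}(a); the underlying argument---passing from $G$ to the supergraph $(G-z)*z$ and then dropping the cone---is identical.
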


The second main idea involved in proving \cref{theorem:globallyrigidindependent} (and also Theorems \ref{theorem:vertexredundantlylinkedgloballylinked} and \ref{theorem:linkedgloballylinked}) is the notion of stress-linked vertex pairs, which was  recently introduced by the first author in \cite{stresslinked}. A pair of vertices is \emph{$d$-stress-linked} in a graph $G$ if it is $d$-linked and every generic realisation of $G$ in $\R^d$ satisfies a certain linear algebraic condition (see \cref{subsection:stresslinked}). It was shown in~\cite{stresslinked} that $d$-stress-linked vertex pairs are globally $d$-linked. Instead of verifying \cref{theorem:vertexredundantlylinkedgloballylinked,theorem:linkedgloballylinked} directly, we prove  stress-linked analogues of these results (\cref{theorem:vertexredundantlylinkedstronger,theorem:linkedtostresslinked}).

Working with stress-linked vertex pairs also lets us unify and extend previous results about the sparsity of several graph classes related to rigid and globally rigid graphs. To explain this connection, let us consider the following properties of a graph $G = (V,E)$ on $n$ vertices. Below, $r_d(G)$ denotes the rank of $E$ in the rigidity matroid $\mathcal{R}_d(K_n)$.  
\begin{enumerate}
      \item $|E| \leq (d+1)n - \binom{d+2}{2}$, %
      \item $G$ is $\mathcal{R}_{d+1}$-independent,
      \item $|E| \leq r_d(G) + n - d - 1$, with equality if and only if $G$ is a copy of $K_{d+2}$. 
      \item $|E| \leq (d+1)n - {(d+1)}^2$.
\end{enumerate}

Clearly, \emph{(d)} implies \emph{(a)}. It follows from Maxwell's criterion for independence in $\cR_d(G)$, see \cite{schulze.whiteley_2017}, that \emph{(b)} and \emph{(c)} also imply \emph{(a)} when $n\geq d+2$; in fact, since a subgraph of an $\mathcal{R}_{d+1}$-independent graph is again $\mathcal{R}_{d+1}$-independent, \emph{(b)} implies that the sparsity bound \emph{(a)} holds for all subgraphs of $G$ on at least $d+2$ vertices. 

There is a variety of graph classes whose minimal members satisfy some or all of these properties. The prototypical example of this is the class of $(d+1)$-connected graphs. It is a classical result of Mader~\cite{mader_1972a} that \emph{(a)} is true for any minimally $(d+1)$-connected graph $G = (V,E)$, and that if $|V| \geq 3d+1$, then \emph{(d)} also holds. Using results from rigidity theory, it is not difficult to show that these graphs also satisfy \emph{(b)} and \emph{(c)} (see \cref{theorem:mainstronger2}).

Similarly, \emph{(b)}, and hence also \emph{(a)} when $n\geq d+2$, hold for minimally vertex-redundantly $d$-rigid graphs~\cite{kaszanitzky.kiraly_2016},  minimally (edge-)redundantly $d$-rigid graphs~\cite{Jext}, and (by \cref{theorem:globallyrigidindependent}) minimally globally $d$-rigid graphs. The latter two families (but not the first) are also known to satisfy \emph{(c)}, and it was shown in~\cite{Jear} that minimally redundantly $2$-rigid graphs on at least seven vertices satisfy  \emph{(d)} when $d=2$. 

To unify these results and to explain the gap between properties \emph{(a)} and \emph{(c)}, we introduce the following notion. A graph $G$ is \emph{$d$-stress-independent} if $\{u,v\}$ is not $d$-stress-linked in $G-uv$, for every edge $uv$ of $G$. Results in~\cite{stresslinked} imply that minimally globally $d$-rigid graphs, minimally redundantly $d$-rigid graphs, and minimally $(d+1)$-connected graphs are all $d$-stress-independent, and that \emph{(c)} holds for any $d$-stress-independent graph on at least $d+2$ vertices. We show that \emph{(b)} also holds for these graphs (\cref{theorem:mainstrongerstress}), and conjecture that they also satisfy \emph{(d)}, provided that the number of vertices is large enough (\cref{conjecture:stressindependentsharp}). We verify this conjecture for $d=2$ (\cref{theorem:2dsharpbound}). Note that for $d=1$, \emph{(c)} implies \emph{(d)} for graphs on at least $4$ vertices.

We also consider the family of {\em minimally $\mathcal{R}_d$-connected graphs} -- graphs whose $d$-dimensional rigidity matroid is connected, but does not remain so after the deletion of any edge. This family was investigated by the third author in~\cite{Jear}, where it was conjectured (and verified for $d=2$) that \emph{(a)} holds for minimally $\mathcal{R}_d$-connected graphs with $n \geq d+2$, and that \emph{(d)} holds when $n \geq \binom{d+2}{2}+1$. We partly resolve this conjecture by showing that \emph{(b)}  holds for minimally $\mathcal{R}_d$-connected graphs for all $n$ and $d$, and that \emph{(c)} holds when $n\geq d+2$. In fact, we prove a slightly stronger result that also includes the case of minimally redundantly $d$-rigid graphs (see \cref{theorem:mainstronger}). We also show that a conjecture from~\cite{stresslinked} would imply that every minimally $\mathcal{R}_d$-connected graph is $d$-stress-independent (\cref{prop:minimallyMconnectedstresslinked}). See \cref{fig:diag1} for an overview of the graph classes considered in this paper and our main results and conjectures.  

The paper is organised as follows.
In the next section we introduce some basic notions from rigidity theory, as well as recall  material from~\cite{stresslinked} about stress-linked vertex pairs. In \cref{section:stresslinked} we prove \cref{theorem:globallyrigidindependent,theorem:vertexredundantlylinkedgloballylinked,theorem:linkedgloballylinked}. In \cref{section:stressindependent} we introduce and discuss the notion of stress-independent graphs and prove \cref{theorem:mainstrongerstress,theorem:2dsharpbound}. In \cref{section:Mconnected}, we investigate minimally $\mathcal{R}_d$-connected graphs and prove \cref{theorem:mainstronger}. Finally, in \cref{section:closing} we prove an additional result about $2$-stress-independent graphs and discuss how our results fit into the literature about minimally $k$-connected graphs.

\begin{figure}[t]
      \centering
      \begin{tikzcd}[column sep={6cm,between origins},row sep={2.2cm,between origins}, cells={anchor=center}, shorten >=4pt, shorten <=4pt]
            \text{minimally globally } d\text{-rigid} \arrow[dd, "\text{\cite{stresslinked}}" description] 
            & \text{minimally redundantly } d\text{-rigid} \arrow[d] 
            & \text{minimally } \mathcal{R}_d\text{-connected} \arrow[dd]
            \\
            & \text{minimally } \mathcal{R}_d\text{-bridgeless} \arrow[dl, "\text{\cite{stresslinked}}" description] \arrow[dr] 
            &  
            \\
            d\text{-stress-independent} \arrow[dr,Rightarrow,red, sloped, "\text{\cref{theorem:mainstrongerstress}}"{yshift=1pt}] \arrow[dd,dashed,"\text{\cref{conjecture:stressindependentsharp}}" description] 
            & 
            & \parbox[c]{4cm}{\centering every $\mathcal{R}_d$-component is\\minimally $\mathcal{R}_d$-connected} \arrow[Rightarrow,red,sloped, "\text{\cref{theorem:mainstronger}}"{yshift=1pt}]{dl} \arrow[ll,dashed,"\text{\cref{conjecture:stresslinkedMcomponent} + \cref{prop:minimallyMconnectedstresslinked}}" description] 
            \\  
            & \mathcal{R}_{d+1}\text{-independent} \arrow[d] 
            & 
            \\  
            \parbox[c]{4cm}{\centering at most $(d+1)|V| - {(d+1)}^2$ \\ edges}
            \arrow[r, shorten <=-6pt, shorten >=-6pt]
            & \parbox[c]{4cm}{\centering at most $(d+1)|V| - \binom{d+1}{2}$ \\ edges} 
            & \parbox[c]{3.5cm}{\centering minimally vertex-redundantly $d$-rigid} \arrow[ul,shorten <=-8pt,"\text{\cite{kaszanitzky.kiraly_2016}}"{pos=0.44, description}] 
      \end{tikzcd}
      \caption{A diagram showing the relationships between the various graph classes discussed in this paper. %
      }\label{fig:diag1}
\end{figure}
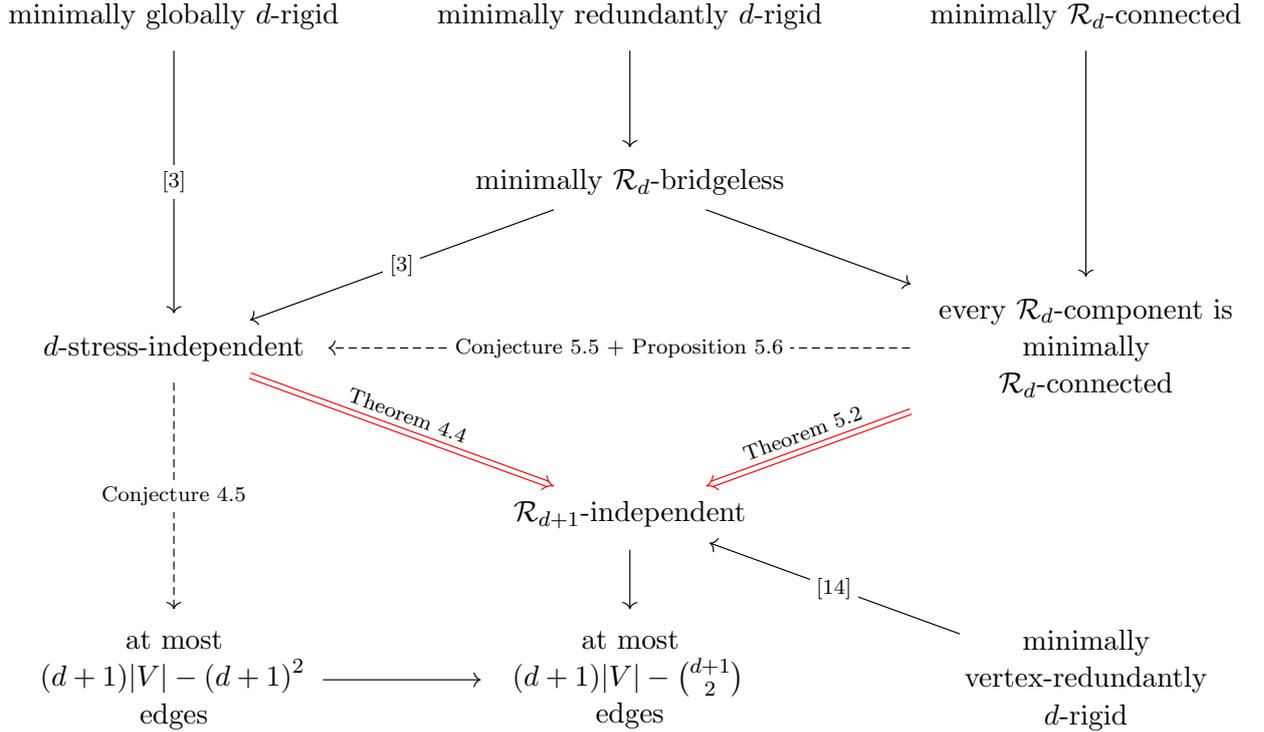

\section{Preliminaries}\label{section:prelims}

All graphs considered throughout the paper are finite and simple.
For a graph $G$, we let $V(G)$ and $E(G)$ denote the vertex and edge set of $G$, respectively. Given a pair of nonadjacent vertices $u,v$ of $G$,  $\kappa(u,v;G)$ denotes the maximum number of pairwise internally disjoint $u,v$-paths in $G$.
We will use $d$ to denote a fixed positive integer throughout the paper.

\subsection{Rigid graphs and the generic rigidity matroid}
We  recall the definitions and results from rigidity theory that we shall use. We refer the reader to  \cite{schulze.whiteley_2017,jordan.whiteley_2017} for a more detailed introduction to this topic. 

A \emph{framework} in $\R^d$ is a pair $(G,p)$ consisting of a graph $G = (V,E)$ and a mapping $p : V \to \R^d$. We will informally refer to both the framework $(G,p)$ and the map $p$ as a {\em realisation} of $G$ in $\R^d$. The framework is \emph{generic} if the multiset of coordinates of $\{p(v), v \in V\}$ is algebraically independent over $\rat$. Two frameworks $(G,p)$ and $(G,q)$ are \emph{equivalent} if $\lVert p(u) - p(v) \rVert = \lVert q(u) - q(v) \rVert$ holds for each edge $uv \in E$, and they are \emph{congruent} if the above equality holds for every pair of vertices $u,v \in V$.

To each framework $(G,p)$ we associate its \emph{rigidity matrix} $R(G,p) \in \R^{|E| \times d|V|}$, a matrix whose rows are indexed by $E$, columns are
indexed by $V \times \{1, \dots, d\}$, and in the row corresponding to the edge $uv$, the entries in the $d$ columns corresponding to vertices $u$ and $v$ contain the vectors $p(u) - p(v)$ and $p(v) - p(u)$, respectively, while the remaining entries are zeros.
The \emph{$d$-dimensional generic rigidity matroid} $\mathcal{R}_d(G)$ of $G$ is the row matroid of $R(G,p)$, for any generic framework $(G,p)$ in $\R^d$. We let $r_d$ denote the rank function of $\mathcal{R}_d(G)$, and by a slight abuse of notation, we write $r_d(G)$ for the rank of $\mathcal{R}_d(G)$.

A graph $G$ on $n$ vertices is \emph{$d$-rigid} if $r_d(G) = r_d(K_n)$. When $n \geq d$, this is equivalent to $r_d(G) = dn - \binom{d+1}{2}$. The graph $G$ is \emph{${\cal R}_d$-independent} if ${\cal R}_d(G)$ is a free matroid, and is an \emph{${\cal R}_d$-circuit} if $E$ is a circuit in ${\cal R}_d(G)$ and $G$ has no isolated vertices. If $G$ is $\mathcal{R}_d$-independent, on at least $d$ vertices, then the above mentioned result of Maxwell
implies that 
\begin{equation}\label{eq:Rdsparsity}
      |E(G)| = r_d(G) \leq d|V(G)| - \binom{d+1}{2},
\end{equation}
and since subgraphs of $\mathcal{R}_d$-independent graphs are also $\mathcal{R}_d$-independent, the bound in \cref{eq:Rdsparsity} also applies to every subgraph of $G$ on at least $d$ vertices.

An edge $e$ of $G$ is an \emph{${\cal R}_d$-bridge} if $r_d(G-e)=r_d(G)-1$ holds. Equivalently, $e$ is an ${\cal R}_d$-bridge if it is not contained in any subgraph of $G$ that is an ${\cal R}_d$-circuit. Similarly, for a pair of vertices $u,v \in V$, we say that $\{u,v\}$ is \emph{$d$-linked in $G$} if $r_d(G+uv)=r_d(G)$. Hence $\{u,v\}$ is $d$-linked in $G$ if and only if $uv$ is not an $\mathcal{R}_d$-bridge in $G+uv$. 

The graph $G$ is \emph{${\cal R}_d$-connected} if ${\cal R}_d(G)$ is connected (i.e., every pair of edges of $G$ is  contained in an ${\cal R}_d$-circuit in $G$)
and $G$ has no isolated vertices. We say that an ${\cal R}_d$-connected graph $G = (V,E)$ with $|E|\geq 2$ is \emph{minimally ${\cal R}_d$-connected} if $G-e$ is not ${\cal R}_d$-connected for all $e\in E$.

We will need the following ``dimension dropping'' result.

\begin{theorem}\label{theorem:Mconnecteddimensiondropping}\cite[Theorem 5.1]{GGJ}
      Every $\Rd$-connected graph is $\mathcal{R}_{d'}$-connected for all $1 \leq d' < d$.
\end{theorem}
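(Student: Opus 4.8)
The plan is to prove the statement by induction on $d-d'$, so that it suffices to treat the case $d'=d-1$, i.e.\ to show that every $\mathcal{R}_d$-connected graph $G$ is $\mathcal{R}_{d-1}$-connected. Since a matroid is connected precisely when any two of its elements lie in a common circuit, and $G$ is $\mathcal{R}_d$-connected, every pair of edges $e,f\in E(G)$ lies in a common $\mathcal{R}_d$-circuit $C\subseteq E(G)$. Hence the whole statement reduces to the following local claim: \emph{every $\mathcal{R}_d$-circuit is $\mathcal{R}_{d-1}$-connected}. Indeed, granting this, $e$ and $f$ lie in a common $\mathcal{R}_{d-1}$-circuit inside $C$, so they lie in the same $\mathcal{R}_{d-1}$-component of $G$; as this holds for every pair, $\mathcal{R}_{d-1}(G)$ is connected (and $G$ has no isolated vertices, being $\mathcal{R}_d$-connected).

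The key elementary tool I would use is a comparison of the two matroids through coordinate projection. If $(C,p)$ is a generic framework in $\R^d$ and $\bar p$ is obtained by deleting the last coordinate of every point, then $\bar p$ is a \emph{generic} framework in $\R^{d-1}$, since its $(d-1)|V(C)|$ coordinates form a subset of an algebraically independent set. Writing the self-stress condition coordinatewise shows that every self-stress of $(C,p)$ is also a self-stress of $(C,\bar p)$; equivalently, the left kernel of the $\R^d$ rigidity matrix embeds into that of the $\R^{d-1}$ rigidity matrix. This yields the monotonicity $r_{d-1}(H)\le r_d(H)$ for all $H$ and, more usefully, identifies the unique (up to scaling) self-stress $\omega$ of $C$, which is nonzero on every edge, with a full-support self-stress of the generic $(d-1)$-dimensional realisation. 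I would also record the standard fact that an $\mathcal{R}_d$-circuit is $\mathcal{R}_d$-rigid, so that $r_d(C)=d|V(C)|-\binom{d+1}{2}$, which pins down the difference $r_d(C)-r_{d-1}(C)$.

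To prove the local claim I would argue by contradiction. Suppose $\mathcal{R}_{d-1}(C)$ is disconnected; then there is a nontrivial partition $E(C)=E_1\sqcup E_2$ with $r_{d-1}(E_1)+r_{d-1}(E_2)=r_{d-1}(C)$, so that every self-stress of the generic $(d-1)$-dimensional realisation splits as a sum of one supported on $E_1$ and one supported on $E_2$. Since $C$ is $\mathcal{R}_d$-connected, the same partition is \emph{not} an $\mathcal{R}_d$-separation, giving $r_d(E_1)+r_d(E_2)\ge r_d(C)+1$. Combining this strict inequality with the $\mathcal{R}_{d-1}$ rank identity, with Maxwell's bound \cref{eq:Rdsparsity} applied to $E_1$ and $E_2$ in both dimensions, and with $r_d(C)=d|V(C)|-\binom{d+1}{2}$, I would bound from below the number of shared vertices $|V(G_1)\cap V(G_2)|$ of the two parts $G_i=(V(E_i),E_i)$. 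The aim is to force this overlap up to the gluing threshold and then apply a gluing lemma for $\mathcal{R}_{d-1}$-connectivity to conclude that $G_1\cup G_2$ is $\mathcal{R}_{d-1}$-connected, contradicting that $E_1$ and $E_2$ lie in different $\mathcal{R}_{d-1}$-components.

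The main obstacle is precisely this last step: the rank inequalities above, used crudely, only force an overlap of order $d/2$, whereas the gluing lemma requires an overlap of at least $d$ vertices. Closing this gap is the crux and needs genuine rigidity-theoretic input rather than pure matroid counting; equivalently, one must prove a subadditivity statement for the rank difference $r_d(\cdot)-r_{d-1}(\cdot)$ across an $\mathcal{R}_{d-1}$-separation, for which controlling the ranks $r_d(E_i)$ exactly (and not merely bounding them by Maxwell) appears essential. An alternative route that sidesteps the vertex-overlap bookkeeping would be to set up a recursive generation of $\mathcal{R}_d$-circuits from $K_{d+2}$ by operations --- such as edge additions within the circuit class and $d$-dimensional vertex splits --- that visibly preserve $\mathcal{R}_{d-1}$-connectivity, with the base case $K_{d+2}$ being $\mathcal{R}_{d-1}$-connected because $d+2\ge (d-1)+2$; here the obstacle shifts to the availability of such a clean inductive construction for general $d$.
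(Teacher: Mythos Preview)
The paper does not supply its own proof of this statement; it is quoted from \cite[Theorem 5.1]{GGJ}. So there is nothing in-paper to compare against directly, and your proposal must be judged on its own.

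Your reduction to the claim that every $\mathcal{R}_d$-circuit is $\mathcal{R}_{d-1}$-connected is correct, and the observation that the unique full-support $d$-stress of a circuit descends, via coordinate projection, to a full-support $(d-1)$-stress is valid. But the argument then stops, and you say so yourself: the rank inequalities coming from ``$(E_1,E_2)$ is not an $\mathcal{R}_d$-separation'' plus Maxwell's bound only yield $d\,|V_1\cap V_2|\ge \binom{d+1}{2}+1$, i.e.\ an overlap of roughly $d/2$, while any gluing-type statement for $\mathcal{R}_{d-1}$-connectivity would need an overlap of order $d$. Nothing in your setup controls $r_d(E_i)$ more tightly than Maxwell, so the gap is real and not a matter of sharpening constants. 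Your fallback---a recursive generation of $\mathcal{R}_d$-circuits from $K_{d+2}$ by moves preserving $\mathcal{R}_{d-1}$-connectivity---is not available: no such generation theorem is known for $d\ge 3$.

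The route in \cite{GGJ} bypasses all of this rank bookkeeping by going through global rigidity. One chains three facts: (i) every $\mathcal{R}_d$-connected graph is $d$-rigid (any two vertices lie in a common $\mathcal{R}_d$-circuit, and circuits of the rigidity matroid are rigid); (ii) every $d$-rigid graph is globally $(d-1)$-rigid (\cref{theorem:rigidgloballyrigid}); and (iii) every globally $(d-1)$-rigid graph on at least $d+1$ vertices is $\mathcal{R}_{d-1}$-connected (\cref{theorem:mconnected}). Step (iii) is the main theorem of \cite{GGJ} and is the substantive rigidity-theoretic input your approach is missing; it is not a matroidal counting statement, and your separation/overlap calculus cannot substitute for it.
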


The \emph{$d$-dimensional edge split} operation consists of subdividing an edge by a new vertex $w$, and then adding $d-1$ more edges incident to $w$. Given a pair of graphs $G_1$ and $G_2$ and edges $u_iv_i \in E(G_i), i \in \{1,2\}$, the \emph{parallel connection} of $G_1$ and $G_2$ along $u_1v_1$ and $u_2v_2$ is the graph obtained by taking the union of $G_1$ and $G_2$ and then identifying $u_1$ with $v_1$ and $u_2$ with $v_2$. The \emph{$2$-sum} of $G_1$ and $G_2$ along $u_1v_1$ and $u_2v_2$ is obtained by taking the parallel connection and then deleting the identified edge. We denote the $2$-sum by $G_1 \oplus_2 G_2$.

\begin{lemma}\cite[Lemma 3.9]{jackson.jordan_2005}\label{lemma:edgesplit}
      If $G$ is obtained from an $\mathcal{R}_2$-connected graph by a $2$-dimensional edge split operation, then $G$ is also $\mathcal{R}_2$-connected.
\end{lemma}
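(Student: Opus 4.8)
The plan is to prove \cref{lemma:edgesplit} directly from the definition of $\mathcal{R}_2$-connectedness, i.e. by showing that every pair of edges of $G$ lies in a common $\mathcal{R}_2$-circuit. Let $G_0$ be the $\mathcal{R}_2$-connected graph from which $G$ is obtained, so that the edge split subdivides some edge $xy \in E(G_0)$ with a new vertex $w$, replaces $xy$ with $wx$ and $wy$, and adds one further edge $wz$ for some vertex $z \in V(G_0)$. First I would record the standard rigidity fact that the $d$-dimensional edge split preserves both $\mathcal{R}_d$-independence and $\mathcal{R}_d$-rigidity (and, in $d=2$, takes $\mathcal{R}_2$-circuits to $\mathcal{R}_2$-circuits); the key special case I want is that if $C$ is an $\mathcal{R}_2$-circuit in $G_0$ containing the edge $xy$, then performing the edge split on $C$ (subdividing $xy$ by $w$ and attaching $w$ to a third vertex of $C$) produces an $\mathcal{R}_2$-circuit in $G$ containing $wx$ and $wy$.

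The second step is to use this to exhibit circuits in $G$ through the three new edges $wx$, $wy$, $wz$ and through arbitrary old edges. Since $G_0$ is $\mathcal{R}_2$-connected, the edge $xy$ lies in an $\mathcal{R}_2$-circuit together with any prescribed other edge $e \in E(G_0)$; splitting such a circuit yields an $\mathcal{R}_2$-circuit of $G$ containing $wx$, $wy$ and $e$. This simultaneously shows that $wx$ and $wy$ are in a common circuit with every old edge, and that $wx$ and $wy$ are in a common circuit with each other. The only genuinely new edge to handle is $wz$; here I would argue that $\{wx, wz, wy, zx\}$ (or an analogous short configuration on the vertices $w,x,y,z$) forms an $\mathcal{R}_2$-circuit, placing $wz$ in a common circuit with $wx$ and $wy$, which by transitivity of the "lies in a common circuit" relation (matroid connectivity is an equivalence relation on the ground set) then links $wz$ to all remaining edges as well. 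Finally, for two old edges $e, f \in E(G_0)$, since $G_0$ is $\mathcal{R}_2$-connected they already lie in a common $\mathcal{R}_2$-circuit $C$ of $G_0$; if $C$ avoids $xy$ then $C$ is still a circuit in $G$, and if $C$ contains $xy$ then its edge split is a circuit of $G$ containing $e$ and $f$.

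I expect the main obstacle to be the first step: establishing cleanly that the $2$-dimensional edge split sends $\mathcal{R}_2$-circuits to $\mathcal{R}_2$-circuits. This requires knowing that the operation preserves $\mathcal{R}_2$-independence on the complement while creating exactly one new dependency, which is the content of the classical Henneberg-type results for the generic rigidity matroid in the plane; I would either cite these directly or verify via a rank count using Maxwell's bound \cref{eq:Rdsparsity} that the split of a circuit is dependent with every proper subgraph independent. Once the circuit-to-circuit behaviour is in hand, the combinatorial bookkeeping to cover all pairs of edges of $G$ is routine, and transitivity of matroid connectivity does most of the work.

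Since this lemma is quoted from \cite[Lemma 3.9]{jackson.jordan_2005}, an alternative and shorter route is simply to invoke the cited proof; the sketch above is the self-contained argument one would give if the reference were unavailable.
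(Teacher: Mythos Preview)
The paper does not give its own proof of this lemma; it simply cites \cite[Lemma 3.9]{jackson.jordan_2005}. So there is no paper argument to compare against, and the question is purely whether your sketch stands on its own. It does not, for two related reasons.

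First, your central move is: take an $\mathcal{R}_2$-circuit $C$ in $G_0$ through $xy$ and a prescribed edge $e$, and ``split'' it to obtain a circuit of $G$ through $wx,wy,e$. But splitting $C$ means replacing $xy$ by $wx,wy$ and adding an edge from $w$ to some third vertex $z'\in V(C)$. The resulting graph is a subgraph of $G$ only when $z'=z$, i.e.\ only when the specific vertex $z$ used in the edge split of $G_0$ already lies in $V(C)$. Nothing in your choice of $C$ guarantees this. The standard repair is to fix an edge $e_z$ of $G_0$ incident to $z$, take $C$ to contain both $xy$ and $e_z$ (which $\mathcal{R}_2$-connectedness of $G_0$ allows), so that $z\in V(C)$ and the split of $C$ genuinely lives inside $G$; then use transitivity through $e_z$ to reach every other edge.

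Second, your treatment of $wz$ is incorrect. The set $\{wx,wz,wy,zx\}$ is four edges on four vertices, hence $\mathcal{R}_2$-independent; the smallest $\mathcal{R}_2$-circuit is $K_4$ with six edges. Moreover $zx$ need not be an edge of $G$ at all. So this ``short configuration'' does not place $wz$ in any circuit. The correct way to capture $wz$ is exactly the construction above: the split (at $z$) of a circuit through $xy$ and $e_z$ already contains $wz$ along with $wx$ and $wy$. With these two fixes your outline becomes a valid proof; without them the argument has a real gap.
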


\begin{lemma}\label{lemma:Mconnected2sum}(\cite[Lemma 4.9]{GJ} and~\cite[Lemma 10]{grasegger.etal_2022})
      Let $G_i = (V_i,E_i), i \in \{1,2\}$ be two disjoint graphs and fix $u_iv_i \in E_i, i \in \{1,2\}$. The following are equivalent.
      \begin{enumerate}[label=\textit{(\alph*)}]
            \item $G_1$ and $G_2$ are both $\Rd$-connected,
            \item the 2-sum $G_1 \oplus_2 G_2$ of $G_1$  and $G_2$ along $u_1v_1$ and $u_2v_2$ is $\Rd$-connected,
            \item the parallel connection of $G_1$  and $G_2$ along $u_1v_1$ and $u_2v_2$ is $\Rd$-connected.
      \end{enumerate}
      Moreover, $G_1 \oplus_2 G_2$ is an $\mathcal{R}_d$-circuit if and only if $G_1$ and $G_2$ are both $\mathcal{R}_d$-circuits.
\end{lemma}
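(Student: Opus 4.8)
The plan is to reduce the statement to classical facts about the parallel connection and $2$-sum of matroids, by first identifying the rigidity matroid of the \emph{graph} parallel connection with the \emph{matroid} parallel connection of the two factors. Write $P$ for the parallel connection of $G_1$ and $G_2$, let $e$ be the common edge obtained by identifying $u_1v_1$ with $u_2v_2$, and let $u,v$ be its endpoints, so that $E(P)=E_1\cup E_2$ with $E_1\cap E_2=\{e\}$ and $V_1\cap V_2=\{u,v\}$. The main claim will be that
\[
\mathcal{R}_d(P)=P\big(\mathcal{R}_d(G_1),\mathcal{R}_d(G_2);e\big),
\]
the matroid parallel connection along the basepoint $e$. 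Granting this, the equivalence of \emph{(a)} and \emph{(c)} is exactly the classical theorem that $P(M_1,M_2)$ is connected if and only if both $M_1$ and $M_2$ are connected (the basepoint being neither a loop nor a coloop in either $\mathcal{R}_d(G_i)$ when these are connected), while \emph{(b)} follows because the $2$-sum deletes $e$ and deletion commutes with taking the rigidity matroid, so $\mathcal{R}_d(G_1\oplus_2 G_2)=\mathcal{R}_d(P)\setminus e=\mathcal{R}_d(G_1)\oplus_2\mathcal{R}_d(G_2)$; the connectivity of the $2$-sum and the final circuit statement are then the standard matroid facts that $M_1\oplus_2 M_2$ is connected (resp.\ a circuit) if and only if both $M_1$ and $M_2$ are connected (resp.\ circuits).

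The technical heart is therefore the matroid identity, which I would establish at the level of self-stresses. Fix a generic realisation $p$ of $P$; its restrictions $p_1,p_2$ to $V_1,V_2$ are generic realisations of $G_1,G_2$ and hence compute the correct generic stress spaces $S(G_1),S(G_2)$. The key lemma is a \emph{force decomposition}: if an assignment $\omega_1\colon E_1\to\R$ is in equilibrium at every vertex of $V_1\setminus\{u,v\}$, then the force system $f=R(G_1,p_1)^{\mathsf T}\omega_1$, being orthogonal to all trivial infinitesimal motions of $\R^d$, satisfies $\sum_w f(w)=0$ (from the translations); since its only nonzero values occur at $u$ and $v$ this gives $f(v)=-f(u)$, and the rotational motions then force $f(u)=\mu\,(p(u)-p(v))$ for some scalar $\mu$. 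Subtracting $\mu$ from the stress value on $e$ therefore turns $\omega_1$ into a genuine self-stress of $G_1$.

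Using this I would build an explicit isomorphism $S(P)\xrightarrow{\ \sim\ } S(G_1)\oplus S(G_2)$: a stress $\omega$ of $P$ restricts to $\omega_1,\omega_2$ with residual scalars $\mu_1,\mu_2$ as above, equilibrium at $u$ in $P$ forces $\omega_e=\mu_1+\mu_2$, and sending $\omega$ to the pair of corrected stresses of $G_1,G_2$ is invertible, with the inverse gluing any pair $(\sigma_1,\sigma_2)$ into a stress of $P$ by assigning the value $\sigma_{1,e}+\sigma_{2,e}$ to $e$. This yields $\dim S(P)=\dim S(G_1)+\dim S(G_2)$, equivalently $r_d(P)=r_d(G_1)+r_d(G_2)-1$, and, tracking the supports of minimal stresses, shows that the circuits of $\mathcal{R}_d(P)$ are exactly the circuits of $\mathcal{R}_d(G_1)$, those of $\mathcal{R}_d(G_2)$, and the composites $(C_1\setminus e)\cup(C_2\setminus e)$ with $e\in C_i\in\mathcal{C}(\mathcal{R}_d(G_i))$ — precisely the circuit family of the matroid parallel connection.

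The force-decomposition lemma is dimension-independent, so this argument covers all $d$ uniformly, and the genericity of the restricted realisations is automatic since the coordinates of $p_1,p_2$ form subfamilies of an algebraically independent set. I expect the main obstacle to be the bookkeeping that upgrades the stress-space isomorphism from a matching of \emph{ranks} to an identity of \emph{circuit families}: one must verify that minimality of stress supports is preserved under $\Phi$ and its inverse, and treat the degenerate cases in which $e$ is a coloop of some $\mathcal{R}_d(G_i)$ (so the corresponding $\mu_i$ vanishes and no composite circuit through $e$ exists). These edge cases are, however, absorbed by the classical matroid statements once the unconditional identity is in place.
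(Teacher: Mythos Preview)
The paper does not include a proof of this lemma; it is quoted from the cited references \cite{GJ,grasegger.etal_2022} without argument. Your approach---reducing to the classical matroid theory of parallel connections and $2$-sums by first establishing the identity $\mathcal{R}_d(P)=P(\mathcal{R}_d(G_1),\mathcal{R}_d(G_2);e)$ via an explicit stress-space decomposition---is correct and is essentially the strategy used in those sources. The force-decomposition step is sound: $f=R(G_1,p_1)^{\mathsf T}\omega_1$ lies in $(\ker R(G_1,p_1))^\perp$, so orthogonality to translations gives $f(u)+f(v)=0$ and orthogonality to infinitesimal rotations forces $(p(u)-p(v))f(u)^{\mathsf T}$ to be symmetric, hence $f(u)\parallel p(u)-p(v)$. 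The upgrade from the rank identity to the full matroid identity does require the circuit-level check you flag, but this is routine once the stress isomorphism is in place (and in any case the restriction $\mathcal{R}_d(P)|_{E_i}=\mathcal{R}_d(G_i)$ together with the modular rank formula already pins down the parallel connection).
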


Given a matroid ${\cal M}=(E,r)$, and a subset $F \subseteq E$, we let ${\cal M}|_F$ denote the restriction of ${\cal M}$ to $F$. The maximal sets $F\subseteq E$ with the property that ${\cal M}|_F$ is connected are called the \emph{connected components} of ${\cal M}$. A standard result in matroid theory tells us that the connected components of ${\cal M}$ are pairwise disjoint and that the rank of ${\cal M}$ is equal to the sum of the ranks of its connected components. We refer to the subgraphs of a graph $G$  which are  (edge-)induced by the connected components of $\cR_d(G)$ as the \emph{$\cR_d$-components} of $G$. 

A \emph{cocircuit} in a matroid ${\cal M}$ is a minimal set of elements whose deletion decreases the rank of $\mathcal{M}$. In particular, a pair of elements $\{e,f\}$ is a cocircuit if and only if
$r({\cal M})=r({\cal M}-e)=r({\cal M}-f)$ and $r({\cal M}-\{e,f\})=r({\cal M})-1$ hold.

The \emph{cone} of a graph $G$, denoted by $G*x$, is obtained from $G$
by adding a new vertex $x$ and an edge from $x$ to each vertex of $G$.
A fundamental result of Whiteley~\cite{Whcone} states that
$G$ is $d$-rigid if and only if $G*x$ is $(d+1)$-rigid.

We will need the following lemma. Part \emph{(a)} is given in~\cite{kaszanitzky.kiraly_2016} and part \emph{(b)} is~\cite[Lemma 16]{K}. We  reproduce their proofs for completeness.

\begin{lemma}\label{lem:bridges} Let $G=(V,E)$ be a graph with $e,f\in E$, and let $G*x$ be the cone of $G$. Then:
      \begin{enumerate}
            \item  $e$ is an $\cR_d$-bridge  in  $G$ if and only if $e$ is an $\mathcal{R}_{d+1}$-bridge in  $G*x$;
            \item  $e, f$ are both bridges in  $\mathcal{R}_{d+1}(G)$ if $\{e,f\} $ is a cocircuit  in  $\mathcal{R}_d(G)$.
      \end{enumerate}
\end{lemma}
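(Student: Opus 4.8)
The plan is to prove both parts by exploiting Whiteley's coning theorem, which tells us that $\mathcal{R}_d$-rigidity in $G$ corresponds exactly to $\mathcal{R}_{d+1}$-rigidity in $G*x$, and to track how rank changes under edge deletion translate between the two settings.

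For part \emph{(a)}, the key observation is that coning is very well-behaved with respect to rank. I would first establish the rank identity $r_{d+1}(G*x) = r_d(G) + |V(G)|$, which follows from Whiteley's theorem applied to $G$ and to all subgraphs: the $|V(G)|$ new cone edges from $x$ are always independent and span the ``coning directions.'' More usefully, I would argue that for any edge $e \in E(G)$, coning and edge deletion commute in the sense that $(G-e)*x = (G*x) - e$, so that
\begin{equation*}
r_{d+1}((G*x)-e) = r_{d+1}((G-e)*x) = r_d(G-e) + |V(G)|.
\end{equation*}
Combining this with the identity for $G*x$ itself gives
\begin{equation*}
r_{d+1}(G*x) - r_{d+1}((G*x)-e) = r_d(G) - r_d(G-e),
\end{equation*}
and the left side equals $1$ precisely when $e$ is an $\mathcal{R}_{d+1}$-bridge in $G*x$, while the right side equals $1$ precisely when $e$ is an $\mathcal{R}_d$-bridge in $G$. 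This yields the equivalence. The main subtlety to get right is the rank identity $r_{d+1}(H*x) = r_d(H) + |V(H)|$ for every relevant subgraph $H$, which is the quantitative form of Whiteley's theorem; I would cite or briefly justify it via the structure of the coned rigidity matrix.

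For part \emph{(b)}, I want to show that if $\{e,f\}$ is a cocircuit in $\mathcal{R}_d(G)$, then both $e$ and $f$ are bridges in $\mathcal{R}_{d+1}(G)$. By definition of a cocircuit on two elements, we have $r_d(G) = r_d(G-e) = r_d(G-f)$ but $r_d(G - \{e,f\}) = r_d(G) - 1$. The natural strategy is to relate the matroid $\mathcal{R}_{d+1}(G)$ to $\mathcal{R}_d$ of a coned or related graph, or alternatively to argue directly via circuits: $e$ is a bridge in $\mathcal{R}_{d+1}(G)$ iff $e$ lies in no $\mathcal{R}_{d+1}$-circuit. I would aim to show that any $\mathcal{R}_{d+1}$-circuit $C$ containing $e$ would, upon restricting to dimension $d$ (via a dimension-dropping or projection argument), force $f$ also to lie in the $\mathcal{R}_d$-closure of $C - e$ in a way incompatible with $\{e,f\}$ being a cocircuit. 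Concretely, the cocircuit condition says $e$ and $f$ are ``linked'' in $\mathcal{R}_d$ — neither alone drops the rank but together they do — which is exactly the dual statement that every $\mathcal{R}_d$-circuit through $e$ also passes through $f$ and vice versa. Lifting to dimension $d+1$ should break this coupling because the extra dimension gives more independence, leaving $e$ (and symmetrically $f$) in no $(d+1)$-circuit at all.

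\textbf{The main obstacle} I anticipate is part \emph{(b)}: unlike part \emph{(a)}, it is only a one-directional implication, and the relationship between cocircuits in dimension $d$ and bridges in dimension $d+1$ is less transparent than the clean coning correspondence. The cleanest route is likely to reduce \emph{(b)} to \emph{(a)} by a coning argument — perhaps showing that a cocircuit $\{e,f\}$ in $\mathcal{R}_d(G)$ becomes a pair of bridges after coning, then transferring back — but care is needed because \emph{(b)} concerns $\mathcal{R}_{d+1}(G)$ directly rather than $\mathcal{R}_{d+1}(G*x)$. An alternative is to use the characterisation of a $2$-element cocircuit together with the submodularity of $r_{d+1}$ and the general inequality $r_{d+1}(H) - r_d(H)$ bounds, comparing rank increments of $G$, $G-e$, $G-f$, and $G-\{e,f\}$ across the two matroids; I expect the decisive step to be showing that the rank deficiency witnessed by the $\mathcal{R}_d$-cocircuit cannot survive into dimension $d+1$, so that $e$ and $f$ individually become redundant (i.e.\ bridges) there.
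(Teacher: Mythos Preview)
Your treatment of part \emph{(a)} is correct and matches the paper's argument exactly: both use the quantitative form of Whiteley's coning theorem, $r_{d+1}(H*x) = r_d(H) + |V(H)|$, applied to $H=G$ and $H=G-e$ to equate the two rank drops.

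For part \emph{(b)}, however, there is a genuine gap. You correctly sense that the cleanest route is to reduce to part \emph{(a)} via coning, and you correctly identify the obstacle: the conclusion concerns $\mathcal{R}_{d+1}(G)$, not $\mathcal{R}_{d+1}(G*x)$ for a new vertex $x$. What you are missing is the trick that bridges this gap, namely to cone with an \emph{existing} vertex of $G$. Concretely, pick a vertex $v$ incident to $e$ but not to $f$. The cocircuit hypothesis makes $f$ an $\mathcal{R}_d$-bridge in $G-e$, hence also in the subgraph $G-v$. Part \emph{(a)} then makes $f$ an $\mathcal{R}_{d+1}$-bridge in $(G-v)*v$. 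The decisive observation is that $G$ is a \emph{subgraph} of $(G-v)*v$, so $f$ remains an $\mathcal{R}_{d+1}$-bridge in $G$; symmetry handles $e$. None of your proposed alternatives --- projecting $\mathcal{R}_{d+1}$-circuits down to dimension $d$, or comparing rank increments via submodularity --- is developed into an actual argument, and it is not clear any of them can be pushed through without in effect rediscovering this coning-at-a-vertex idea.

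As a minor aside, your final sentence conflates ``redundant'' with ``bridge''; these are opposites. A bridge is a non-redundant edge, i.e.\ one whose deletion drops the rank.
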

\begin{proof} \textit{(a)} This follows immediately from Whiteley's Coning Theorem, which implies  that $r_d((G*x)-e) = r_d(G-e) + |V|$ and $r_d(G^*) = r_d(G) + |V|$.
      
      \textit{(b)} Choose a vertex $v\in V$ incident with $e$ but not with $f$. Since $\{e,f\}$ is a cocircuit  in  $\mathcal{R}_d(G)$, $f$ is a bridge in $\mathcal{R}_d(G-e)$. This implies that $f$ is a bridge in $\mathcal{R}_d(G-v)$ and hence, by part \emph{(a)}, $f$ is a bridge in $\mathcal{R}_{d+1}((G-v)*v)$. Since $G$ is a subgraph of $(G-v)*v$, $f$ is also a bridge in $\mathcal{R}_{d+1}(G)$. By symmetry, $e$ is also a bridge in $\mathcal{R}_{d+1}(G)$.
\end{proof}

A similar argument shows that the members of a cocircuit of size $k$ in $G$ are bridges in $\cR_{d+k-1}(G)$, see~\cite[Lemma 16]{K}.

We say that a graph $G$ is \emph{redundantly $d$-rigid} if $G-e$ is $d$-rigid for every edge $e$ of $G$, and \emph{minimally redundantly $d$-rigid} if $G$ is redundantly $d$-rigid but $G-e$ is not, for every edge $e$ of $G$. More generally, $G$ is \emph{minimally $\mathcal{R}_d$-bridgeless} if $G$ has no $\mathcal{R}_d$-bridges but $G-e$ does, for every edge $e$ of $G$. In matroidal terms, this means that every edge of $G$ is contained in a size two cocircuit of $\mathcal{R}_d(G)$. Note that the minimally redundantly $d$-rigid graphs are precisely the rigid minimally $\mathcal{R}_d$-bridgeless graphs.

We say that a framework $(G,p)$ in $\R^d$ is \emph{globally rigid} if every framework in $\R^d$ that is equivalent to $(G,p)$ is congruent to $(G,p)$. The graph $G$ is \emph{globally $d$-rigid} if every generic framework of $G$ in $\R^d$ is globally rigid. 
For a pair of vertices $u,v$ of $G$, we say that $\{u,v\}$ is \emph{globally $d$-linked in $G$} if,  for every generic framework $(G,p)$ in $\R^d$ and every equivalent framework $(G,q)$, we have $\lVert p(u) - p(v)\rVert = \lVert q(u) - q(v) \rVert$.\footnote{Note that there is a fundamental difference in the properties of being $d$-linked and globally $d$-linked. Two vertices $u,v$ of a graph $G$ are linked in all generic realisations of $G$ in $\R^d$ if and only if they are linked in some generic realisation. On the other hand, it is possible that $u,v$ can be globally linked in some generic realisations of $G$ in $\R^d$ and not in others.} Thus $G$ is globally $d$-rigid if and only if every pair of vertices of $G$ is globally $d$-linked in $G$.

We will need the following combinatorial characterisation of globally $2$-rigid graphs from \cite{jackson.jordan_2005}.

\begin{theorem} \cite{jackson.jordan_2005}
\label{theorem:jacksonjordan} Let $G$ be a graph on at least $4$ vertices.
      Then following statements are equivalent:
      \begin{enumerate}
            \item $G$ is globally $2$-rigid;
            \item $G$ is $3$-connected and $\mathcal{R}_2$-connected;
            \item $G$ can be obtained from the complete graph $K_4$ by a sequence of $2$-dimensional edge split operations and edge additions.
      \end{enumerate}
\end{theorem}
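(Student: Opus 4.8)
The plan is to prove the cyclic chain of implications $(a)\Rightarrow(b)\Rightarrow(c)\Rightarrow(a)$. Of these, $(a)\Rightarrow(b)$ is essentially a packaging of known necessary conditions, while the two substantive arguments are the combinatorial construction $(b)\Rightarrow(c)$ and the geometric preservation step hidden in $(c)\Rightarrow(a)$.

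For $(a)\Rightarrow(b)$, I would start from Hendrickson's necessary conditions \cite{hendrickson_1992}: a globally $2$-rigid graph on at least $4=d+2$ vertices is $3$-connected and redundantly $2$-rigid. It then remains to promote ``redundantly $2$-rigid'' to ``$\mathcal{R}_2$-connected''. For this I would establish the matroidal lemma that a $3$-connected redundantly $2$-rigid graph is $\mathcal{R}_2$-connected. Redundant rigidity guarantees that $\mathcal{R}_2(G)$ has no bridges, so every $\mathcal{R}_2$-component is a genuine $\mathcal{R}_2$-connected subgraph on at least four vertices. If there were at least two such components, then the structural fact (a consequence of the $2$-sum behaviour of $\mathcal{R}_2$-connectedness, cf.\ \cref{lemma:Mconnected2sum}) that distinct $\mathcal{R}_2$-components meet in at most two vertices would force a shared pair of vertices to be a $2$-vertex cut of $G$, contradicting $3$-connectivity. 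Hence $\mathcal{R}_2(G)$ is connected.

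For $(b)\Rightarrow(c)$, I would prove the inductive construction theorem: every $3$-connected $\mathcal{R}_2$-connected graph $G\neq K_4$ can be reduced to a smaller graph in the same class by either deleting an edge (the reverse of edge addition) or performing the reverse of a $2$-dimensional edge split at a vertex of degree three. Since $\mathcal{R}_2$-connectedness forces minimum degree at least three and $|E|\geq 2|V|-2$, the argument is a min-degree case analysis. \cref{lemma:edgesplit} certifies that the forward edge split stays within the class, so the real work is to show that in every $G\neq K_4$ at least one admissible reverse move exists, the delicate point being the simultaneous preservation of $3$-connectivity; iterating the reduction terminates at $K_4$ and yields the desired construction sequence.

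For $(c)\Rightarrow(a)$, the base case is that $K_4$ is globally $2$-rigid, which is immediate because four generic points in the plane are determined up to congruence by their six pairwise distances. For the inductive step I must show that both operations preserve global $2$-rigidity. Edge addition is trivial, since any realisation equivalent to the augmented graph is in particular an equivalent realisation of the subgraph and hence congruent. The crux, which I expect to be the main obstacle of the whole proof, is the $2$-dimensional edge split, i.e.\ the $1$-extension that subdivides an edge $xy$ by a new degree-three vertex $w$ joined also to a third vertex $z$. Here I would invoke the stress-matrix sufficient condition for generic global rigidity: a generic framework on $n$ vertices is globally $2$-rigid once it carries an equilibrium stress whose stress matrix has the maximal rank $n-3$. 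Starting from such a maximal-rank stress on a generic realisation of the smaller graph, I would extend it to the split graph by solving the (generically one-dimensional) equilibrium condition at $w$ for the stresses on $wx,wy,wz$ and correcting the stresses on the incident old edges, and then show that the enlarged stress matrix has rank exactly $(n+1)-3$. Verifying that the new vertex can always be placed generically so that this maximal rank increases by exactly one is where the real difficulty of generic global rigidity lies; by contrast the construction theorem of $(b)\Rightarrow(c)$, though intricate, is a finite case analysis controlled by the sparsity and connectivity bounds.
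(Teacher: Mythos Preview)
The paper does not prove this theorem: it is quoted as a background result from \cite{jackson.jordan_2005} in the preliminaries section, with no proof given. There is therefore nothing in the paper to compare your proposal against.

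As a sketch of the original Jackson--Jord\'an argument your outline is broadly on the right track, but be aware of two points. First, in your $(a)\Rightarrow(b)$ step, the claim that distinct $\mathcal{R}_2$-components of a redundantly $2$-rigid graph must share a pair of vertices forming a $2$-cut is not quite right as stated; the actual argument in \cite{jackson.jordan_2005} goes via a structural lemma on how $\mathcal{R}_2$-components of a rigid graph overlap (they pairwise share at most one vertex), and \cref{lemma:Mconnected2sum} is not the tool for this. Second, the hard direction historically is $(b)\Rightarrow(a)$ (equivalently your $(b)\Rightarrow(c)\Rightarrow(a)$), and within that the genuinely deep step is the inductive construction $(b)\Rightarrow(c)$: showing that every $3$-connected $\mathcal{R}_2$-connected graph other than $K_4$ admits an admissible inverse edge split or edge deletion is a substantial piece of combinatorics occupying most of \cite{jackson.jordan_2005}, not a ``finite case analysis controlled by sparsity bounds'' as you suggest. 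By contrast, the stress-matrix step in $(c)\Rightarrow(a)$ that you flag as the main obstacle was already available from Connelly's work and is comparatively routine once one has the construction sequence.
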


We will also use the following recent result on global $d$-rigidity.

\begin{theorem}\cite{GGJ}\label{theorem:mconnected}
      Every graph  on at least $d+2$ vertices which is globally $d$-rigid is $\Rd$-connected.
\end{theorem}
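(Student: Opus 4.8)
The plan is to argue by contraposition, showing that a graph whose rigidity matroid is disconnected cannot be globally $d$-rigid. So suppose $G=(V,E)$ has $n\ge d+2$ vertices, is $d$-rigid, and $\mathcal{R}_d(G)$ is disconnected. By Hendrickson's necessary condition \cite{hendrickson_1992}, a globally $d$-rigid graph on at least $d+2$ vertices is redundantly $d$-rigid and hence has no $\mathcal{R}_d$-bridge; thus it suffices to derive a contradiction when $G$ has no $\mathcal{R}_d$-bridge, so that $G$ splits into $k\ge 2$ nontrivial $\mathcal{R}_d$-components $G_1,\dots,G_k$. Each $G_i$ is $\mathcal{R}_d$-connected, and since the smallest $\mathcal{R}_d$-circuit is a copy of $K_{d+2}$, each $G_i$ has at least $d+2$ vertices; in a generic framework its vertex set $V_i=V(G_i)$ therefore affinely spans $\R^d$.

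First I would reduce global $d$-rigidity to a statement about equilibrium stresses. By the theorem of Gortler, Healy and Thurston, a generically globally $d$-rigid graph that is not a simplex admits, at a generic framework $(G,p)$, an equilibrium stress $\omega$ whose stress matrix $\Omega$ has the maximum possible rank $n-d-1$. Every equilibrium stress of $G$ decomposes as $\omega=\sum_{i=1}^{k}\omega_i$, where $\omega_i$ is supported on $E(G_i)$: the support of any stress is a union of $\mathcal{R}_d$-circuits, and each circuit lies inside a single $\mathcal{R}_d$-component. Consequently $\Omega=\sum_i\Omega_i$, where $\Omega_i$ is a stress matrix supported on $V_i$, and in particular each $\Omega_i$ annihilates every vector on $V$ whose restriction to $V_i$ agrees with an affine function $\R^d\to\R$.

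The crux is then to produce a single vector $z\in\R^n$ that is not globally affine but whose restriction to each $V_i$ agrees with some affine function of the coordinates $p(v)$. Such a $z$ satisfies $\Omega_i z=0$ for all $i$, hence $\Omega z=0$ for every equilibrium stress $\omega$, while $z\notin\mathrm{col}\,[\,p\mid\mathbf 1\,]$. This forces $\mathrm{rk}\,\Omega\le n-d-2<n-d-1$ for every stress, contradicting the Gortler--Healy--Thurston criterion and finishing the proof. The space of candidate vectors is $W=\{z\in\R^n: z|_{V_i}\text{ is affine for every }i\}$, which always contains the $(d+1)$-dimensional space of globally affine functions; the task is to show that $\dim W>d+1$ whenever $k\ge 2$.

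The main obstacle is that $W$ can collapse to exactly the globally affine functions when the components share many vertices: two affine functions on $V_i$ and $V_j$ that agree on $V_i\cap V_j$ must coincide as soon as $|V_i\cap V_j|\ge d+1$, so large overlaps propagate a single affine function across all components. To control this I would prove a structural lemma bounding the overlaps, the natural target being that two distinct $\mathcal{R}_d$-components share at most $d$ vertices --- equivalently, that $d+1$ affinely independent shared vertices would support an equilibrium stress meeting both components, and hence an $\mathcal{R}_d$-circuit crossing them, contradicting matroid-disconnectedness. (For $d=1$ this is just the statement that distinct blocks of the graphic matroid meet in at most one cut vertex.) With overlaps suitably bounded, the affine functions on the $G_i$ retain enough independent freedom to be glued into a non-global affine $z$, and a dimension count along the overlap structure yields $\dim W\ge d+2$. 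Establishing this overlap lemma, and converting the resulting dimension count into the vector $z$, is where I expect the real work to lie.
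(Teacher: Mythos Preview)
The paper does not prove this theorem: it is quoted from \cite{GGJ} with no argument given, so there is nothing in the present paper to compare your proposal against.

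On the merits of your proposal itself: the overall strategy (decompose every stress along $\cR_d$-components, then exhibit a vector in the common kernel of all stress matrices that is not globally affine, contradicting the Gortler--Healy--Thurston rank criterion) is sound and can be completed. But you have mis-identified where the work lies. You do \emph{not} need a per-pair ``overlap lemma'', and your sketched justification for it (that $d+1$ shared vertices would produce a stress meeting both components) is not a valid argument as stated. What you actually need drops out of rank arithmetic. Each nontrivial $\cR_d$-component $G_i$ is $\cR_d$-connected and hence $d$-rigid, so $r_d(G_i)=dn_i-\binom{d+1}{2}$; since ranks of matroid components add, $\sum_i\big(dn_i-\binom{d+1}{2}\big)=dn-\binom{d+1}{2}$, giving $\sum_i n_i=n+(k-1)(d+1)/2$. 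Now your space $W$ of piecewise-affine vectors is the intersection of $k$ subspaces of $\R^n$ of codimension $n_i-(d+1)$, so
\[
\dim W \ \ge\ n-\sum_i\big(n_i-(d+1)\big)\ =\ k(d+1)-\tfrac{(k-1)(d+1)}{2}\ =\ \tfrac{(k+1)(d+1)}{2}\ \ge\ d+2
\]
for $k\ge 2$, which is exactly the strict inequality you need. (The same rank identity, applied to any two components $G_i,G_j$, shows $|V_i\cap V_j|\le (d+1)/2$, so your overlap lemma is in fact true, with a better constant; but you do not need to isolate it.) The reduction to the bridge-free case via Hendrickson, and the fact that the smallest $\cR_d$-circuit is $K_{d+2}$, are fine as you have them.
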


\subsection{Equilibrium stresses and stress-linked vertex pairs}\label{subsection:stresslinked}

Let $G = (V,E)$ be a graph. Given a framework $(G,p)$ in $\R^d$, a vector $\omega \in \R^E$ is an \emph{(equilibrium) stress} of $(G,p)$ if the equilibrium condition
\begin{equation}\label{eq:equilibrium}
      \sum_{u: uv \in E} \omega(uv)\left(p(u)-p(v)\right) = 0
\end{equation}
holds for every $v \in V$. Equivalently, $\omega$ is a stress of $(G,p)$ if and only if $\omega$ belongs to the cokernel of the rigidity matrix $R(G,p)$.
We use $S(G,p) \subseteq \R^{E}$ to denote the space of stresses of $(G,p)$.

To every stress $\omega$, we associate its \emph{stress matrix} $\Omega \in \R^{|V| \times |V|}$, a matrix whose rows and columns are indexed by $V$ and whose entries are given by
\[\Omega_{uv} = \begin{cases}
- \omega_{uv} & \text{if } u \neq v \text{ and } uv \in E, \\ 0 & \text{if } u \neq v \text{ and } uv \notin E, \\ {\displaystyle \sum_{uw \in E}} \omega_{uw} & \text{if } u = v.
\end{cases}\]
If we represent the realisation $p$ as a matrix $P \in \R^{|V| \times d}$, then $\omega \in S(G,p)$ holds if and only if $\Omega P = 0$. More precisely, the equilibrium conditions \cref{eq:equilibrium} are satisfied for $\omega$ and $(G,p)$ at vertex $v$ if and only if the row of $\Omega P$ corresponding to $v$ is zero.

The following stress-based analogue of globally linked pairs was introduced in~\cite{stresslinked}. 
Let $G = (V,E)$ be a graph, and let $u,v \in V$ be a pair of vertices. We say that $\{u,v\}$ is \emph{$d$-stress-linked in $G$} if $\{u,v\}$ is $d$-linked in $G$ and, for every generic framework $(G,p)$ in $\R^d$ and every framework $(G,q)$ such that $S(G,p) \subseteq S(G,q)$, we have $S(G+uv,p) \subseteq S(G+uv,q)$. Note that, since $G+uv=G$ whenever $uv \in E$, adjacent pairs of vertices are always $d$-stress linked in $G$. 

It is known that $d$-stress-linked pairs are globally $d$-linked.
More precisely, parts \emph{(a)} and \emph{(c)} of the following theorem were proved in~\cite[Theorems 4.2 and 4.7]{stresslinked}. Part \emph{(b)} follows from the folklore observation that if $\{u,v\}$ is globally $d$-linked in $G$, then $\kappa(u,v;G) \geq d+1$. %

\begin{theorem}\label{theorem:stresslinked}
      Let $G$ be a graph and $u,v$ be a pair of nonadjacent vertices in $G$. Suppose that $\{u,v\}$ is $d$-stress-linked in $G$. Then
      \begin{enumerate}
            \item
            $\{u,v\}$ is globally $d$-linked in $G$,
            \item $\kappa(u,v;G) \geq d + 1$, and
            \item
            $uv$ is not contained in any size two cocircuit of $\mathcal{R}_d(G+uv)$.
      \end{enumerate}
\end{theorem}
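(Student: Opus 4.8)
The plan is to establish the three conclusions in the stated order, treating each as a consequence of the single hypothesis that $\{u,v\}$ is $d$-stress-linked in $G$. The key object throughout will be the relationship between the stress spaces $S(G,p)$ and $S(G+uv,p)$: since $G+uv$ contains exactly one more edge than $G$, the space $S(G+uv,p)$ is at most one dimension larger than $S(G,p)$, and the stress-linked hypothesis asserts that the containment $S(G,p) \subseteq S(G,q)$ is preserved when we pass to $G+uv$.

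For part \emph{(a)}, I would argue that if $\{u,v\}$ were not globally $d$-linked, then there would exist a generic framework $(G,p)$ and an equivalent framework $(G,q)$ with $\lVert p(u)-p(v)\rVert \neq \lVert q(u)-q(v)\rVert$. Since $(G,p)$ and $(G,q)$ are equivalent, every stress of $(G,p)$ is automatically a stress of $(G,q)$ (the equilibrium and stress conditions only constrain edge directions, and equivalence of frameworks forces $S(G,p)\subseteq S(G,q)$ via the standard argument that equivalent frameworks share the same annihilator structure on edges). The hypothesis then gives $S(G+uv,p)\subseteq S(G+uv,q)$. Because $\{u,v\}$ is $d$-linked, $uv$ is not an $\mathcal{R}_d$-bridge in $G+uv$, so there is a stress $\omega$ of $(G+uv,p)$ with $\omega(uv)\neq 0$; this $\omega$ must also be a stress of $(G+uv,q)$. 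Writing out the equilibrium condition for this edge in both frameworks and subtracting should force $\lVert p(u)-p(v)\rVert = \lVert q(u)-q(v)\rVert$, the desired contradiction. This is essentially the content cited to \cite[Theorem 4.2]{stresslinked}, so I would import it directly.

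Part \emph{(b)} follows immediately from part \emph{(a)} together with the folklore fact quoted in the excerpt: if $\{u,v\}$ is globally $d$-linked in $G$, then $\kappa(u,v;G)\geq d+1$. Intuitively, if there were a vertex cut of size at most $d$ separating $u$ from $v$, one could reflect one side of the framework across the affine hull of the cut vertices to produce an equivalent but non-congruent realisation changing the $u$--$v$ distance. I would simply invoke this as stated.

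The main obstacle is part \emph{(c)}, which requires ruling out that $uv$ lies in a size-two cocircuit of $\mathcal{R}_d(G+uv)$. Suppose for contradiction that $\{uv, f\}$ is such a cocircuit. The plan is to translate this matroidal condition into a statement about stress spaces: a cocircuit of size two means that deleting either edge alone preserves rank, but deleting both drops it, so $r_d(G+uv)=r_d(G+uv-f)$ and the stress space of $(G+uv,p)$ restricted appropriately behaves rigidly with respect to $f$. I would then produce a framework $(G,q)$ that stress-dominates $(G,p)$ but for which the stress-linked containment fails at the added edge $uv$, contradicting the hypothesis; the construction should exploit the second edge $f$ of the cocircuit to build a deformation that violates stress preservation. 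This is the delicate step, relying on \cite[Theorem 4.7]{stresslinked}, and I expect the technical heart to be showing that the cocircuit structure yields a witness framework $(G,q)$ with $S(G,p)\subseteq S(G,q)$ but $S(G+uv,p)\not\subseteq S(G+uv,q)$. I would lean on the results of \cite{stresslinked} for this rather than reconstruct the full argument.
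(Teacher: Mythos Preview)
Your proposal matches the paper's treatment exactly: the paper does not prove this theorem but cites \cite[Theorems 4.2 and 4.7]{stresslinked} for parts \emph{(a)} and \emph{(c)} and derives \emph{(b)} from \emph{(a)} via the same folklore reflection argument you describe. One caution on your sketch for \emph{(a)}: the claim that $S(G,p)\subseteq S(G,q)$ for equivalent frameworks with $p$ generic is true but is a nontrivial algebro-geometric fact (it uses that $p$ maps to a smooth point of the measurement variety), not a consequence of ``edge directions'' as you suggest---so your instinct to import it directly from \cite{stresslinked} is the right call.
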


It is conjectured in~\cite{stresslinked} that being $d$-stress-linked is, in fact, equivalent to being globally $d$-linked. This conjecture holds for the special case of globally $d$-rigid graphs by the following result.

\begin{proposition}\label{prop:globalstresslinked}\cite[Proposition 4.3]{stresslinked} A graph $G$ is globally $d$-rigid if and only if every pair of vertices is $d$-stress-linked in $G$.
\end{proposition}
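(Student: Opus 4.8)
The plan is to prove the two implications separately, handling the reverse direction in one line and concentrating effort on the forward direction. For the ``if'' direction, suppose every pair of vertices of $G$ is $d$-stress-linked. Adjacent pairs are trivially globally $d$-linked (equivalence preserves edge lengths), and each nonadjacent pair is globally $d$-linked by \cref{theorem:stresslinked}(a). Thus every pair of vertices is globally $d$-linked, which is exactly the statement that $G$ is globally $d$-rigid. No new machinery is needed here.

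For the ``only if'' direction, assume $G=(V,E)$ is globally $d$-rigid and fix a pair $\{u,v\}$; I may assume $uv\notin E$, since adjacent pairs are always $d$-stress-linked. Existence of a nonadjacent pair together with global $d$-rigidity forces $\kappa(u,v;G)\geq d+1$, so $n:=|V|\geq d+2$. Since $G$ is $d$-rigid we have $r_d(G+uv)=r_d(G)$, so $\{u,v\}$ is $d$-linked; it remains to establish the stress condition. To that end I would fix a generic framework $(G,p)$ in $\R^d$ and an arbitrary framework $(G,q)$ with $S(G,p)\subseteq S(G,q)$, writing $P,Q\in\R^{|V|\times d}$ for the coordinate matrices, and aim to prove $S(G+uv,p)\subseteq S(G+uv,q)$.

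The crux is to show that $q$ must be an affine image of $p$. Here I would invoke the theorem of Gortler, Healy and Thurston: a generic globally $d$-rigid framework on $n$ vertices admits a stress $\omega\in S(G,p)$ whose stress matrix $\Omega$ has the maximum possible rank $n-d-1$. The kernel of $\Omega$ always contains the all-ones vector $\mathbf 1$ (the rows of $\Omega$ sum to zero) together with the $d$ coordinate columns of $P$ (as $\Omega P=0$); for generic $p$ these are linearly independent, so $\ker\Omega$ is spanned by $\mathbf 1$ and the columns of $P$. Since $\omega\in S(G,p)\subseteq S(G,q)$ gives $\Omega Q=0$, every column of $Q$ lies in $\ker\Omega$, whence there are a $d\times d$ matrix $M$ and a vector $c$ with $q(w)=Mp(w)+c$ for all $w\in V$.

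Finally, affine images preserve equilibrium stresses on any edge set, so the stress containment is automatic: for any $\omega'\in S(G+uv,p)$ and any vertex $w$, using $q(w)-q(w')=M\big(p(w)-p(w')\big)$ I would compute
\[\sum_{w'\,:\,ww'\in E(G+uv)}\omega'(ww')\big(q(w)-q(w')\big)=M\sum_{w'\,:\,ww'\in E(G+uv)}\omega'(ww')\big(p(w)-p(w')\big)=0,\]
so $\omega'\in S(G+uv,q)$. This yields $S(G+uv,p)\subseteq S(G+uv,q)$ and completes the verification that $\{u,v\}$ is $d$-stress-linked. The main obstacle is entirely the forward direction's dependence on the maximal-rank stress matrix guaranteed by Gortler--Healy--Thurston; once this deep fact is in hand, everything reduces to the elementary linear-algebraic observation that $S(G,p)\subseteq S(G,q)$ forces $q$ to be affinely equivalent to $p$, after which affine invariance of stresses finishes the argument.
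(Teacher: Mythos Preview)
The paper does not supply its own proof of this proposition; it is quoted from \cite{stresslinked} without argument, so there is no in-paper proof to compare against. Your proof is correct: the backward direction is immediate from \cref{theorem:stresslinked}(a), and for the forward direction your use of Gortler--Healy--Thurston to force $q$ to be an affine image of $p$, followed by affine invariance of equilibrium stresses, is the standard route.

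One small remark: you invoke a \emph{single} stress matrix $\Omega$ of rank $n-d-1$, which is the original formulation of the result, whereas the paper states it (\cref{theorem:ght}) in the equivalent shared-kernel form $\dim\bigcap_{\omega\in S(G,p)}\ker\Omega=d+1$. The latter plugs in even more directly: since $\mathbf 1$ and the columns of $P$ always lie in every such $\ker\Omega$ and are $d+1$ linearly independent vectors for generic $p$, they span this intersection; and the hypothesis $S(G,p)\subseteq S(G,q)$ says precisely that every column of $Q$ lies in $\bigcap_\omega\ker\Omega$, giving the affine relation $q(w)=Mp(w)+c$ without needing to isolate a single maximal-rank stress.
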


We will need the following basic fact about stress-linked vertex pairs.

\begin{lemma}\label{lemma:stresslinkedsubgraph}\cite[Lemma 4.9]{stresslinked}
      Let $G$ be a graph, $G_0$ be a subgraph of $G$, and $u,v$ be a pair of vertices of $G_0$. If $\{u,v\}$ is $d$-stress-linked in $G_0$, then $\{u,v\}$ is $d$-stress-linked in $G$.
\end{lemma}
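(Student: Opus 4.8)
The plan is to unwind the definition of $d$-stress-linkedness and show that each of the two conditions — being $d$-linked, and the stress-inclusion property — transfers from a subgraph $G_0$ to the larger graph $G$. First I would handle the easy case $uv \in E(G_0)$: here $\{u,v\}$ is adjacent, hence trivially $d$-stress-linked in both $G_0$ and $G$, so we may assume $u,v$ are nonadjacent in $G$ (and a fortiori in $G_0$). I would then note that $d$-linkedness of $\{u,v\}$ in $G$ follows from monotonicity of the rank function: since $\{u,v\}$ is $d$-linked in $G_0$, adding $uv$ does not increase $r_d$ on $G_0$, and because $E(G_0) \subseteq E(G)$ the edge $uv$ is spanned by the circuit structure already present in $G_0 + uv$, so $uv$ cannot be an $\mathcal{R}_d$-bridge in $G + uv$ either.

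The main content is the stress-inclusion property. Fix a generic framework $(G,p)$ in $\R^d$ and an arbitrary framework $(G,q)$ with $S(G,p) \subseteq S(G,q)$; the goal is to deduce $S(G+uv,p) \subseteq S(G+uv,q)$. The natural move is to restrict everything to $G_0$: let $p_0$ and $q_0$ be the restrictions of $p$ and $q$ to $V(G_0)$, giving frameworks $(G_0,p_0)$ and $(G_0,q_0)$. A stress of a framework is precisely a vector in the cokernel of the rigidity matrix, and the key structural observation is how stresses of $G$ relate to stresses of the subgraph $G_0$: any stress $\omega$ of $(G,p)$ restricts to a stress of $(G_0,p_0)$ (the equilibrium condition \eqref{eq:equilibrium} at a vertex of $G_0$ only tightens when we drop edges outside $G_0$, so one must check this carefully — one works with stresses supported on $E(G_0)$, obtained by extending a stress of $G_0$ by zero on $E(G)\setminus E(G_0)$). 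I would therefore view $S(G_0,p_0) \subseteq \R^{E(G_0)}$ as sitting inside $S(G,p) \subseteq \R^{E(G)}$ via extension by zero, and similarly for $G+uv$ versus $G_0+uv$.

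The crux is then to verify the hypothesis $S(G_0,p_0) \subseteq S(G_0,q_0)$ from $S(G,p) \subseteq S(G,q)$, so that the assumed $d$-stress-linkedness of $\{u,v\}$ in $G_0$ can be invoked to conclude $S(G_0+uv,p_0)\subseteq S(G_0+uv,q_0)$, and finally to lift this inclusion back up to $G+uv$. I expect the main obstacle to be precisely this compatibility bookkeeping: a stress of $G_0$ extended by zero is a stress of $G$ at the vertices of $G_0$, but one must confirm the equilibrium condition also holds at vertices of $G$ outside $G_0$ (trivially, since such vertices carry no edges of $G_0$) and, more delicately, that the inclusion $S(G,p)\subseteq S(G,q)$ really does restrict to the inclusion $S(G_0,p_0)\subseteq S(G_0,q_0)$ rather than only giving information about stresses supported on all of $E(G)$. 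Genericity of $p$ guarantees that $p_0$ is generic as a realisation of $G_0$, which is what legitimises applying the definition of $d$-stress-linkedness in $G_0$ to the framework $(G_0,p_0)$. Once the extension-by-zero identifications are set up correctly and the subset relations are tracked through the inclusions $E(G_0)\subseteq E(G)$ and $E(G_0+uv)\subseteq E(G+uv)$, the desired conclusion $S(G+uv,p)\subseteq S(G+uv,q)$ follows by combining the lifted inclusion with the fact that stresses involving the single new edge $uv$ are governed identically in $G_0+uv$ and $G+uv$.
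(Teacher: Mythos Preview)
The paper does not give its own proof of this lemma; it is quoted from \cite{stresslinked}. Your plan is the natural one and is correct in outline: restrict $p$ and $q$ to $V(G_0)$, verify via extension-by-zero that $S(G,p)\subseteq S(G,q)$ restricts to $S(G_0,p_0)\subseteq S(G_0,q_0)$, invoke the hypothesis on $G_0$, and lift back. The bookkeeping you flag (genericity of $p_0$, equilibrium at vertices outside $V(G_0)$, etc.) all goes through exactly as you describe.

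The one step that is genuinely under-specified is the final lift. Your phrase ``stresses involving the single new edge $uv$ are governed identically in $G_0+uv$ and $G+uv$'' is not literally true: a stress of $(G+uv,p)$ that is nonzero on $uv$ need not be supported on $E(G_0+uv)$, so the inclusion $S(G_0+uv,p_0)\subseteq S(G_0+uv,q_0)$ does not by itself say anything about such a stress. What makes the argument work is the decomposition
\[
S(G+uv,p)\;=\;S(G,p)\;+\;\iota\bigl(S(G_0+uv,p_0)\bigr),
\]
where $\iota$ is extension by zero. This holds because $\{u,v\}$ is $d$-linked in $G_0$: there is a stress $\omega_C$ of $(G_0+uv,p_0)$ with $\omega_C(uv)=1$, and then for any $\omega\in S(G+uv,p)$ the difference $\omega-\omega(uv)\,\iota(\omega_C)$ vanishes on $uv$ and hence lies in $S(G,p)$. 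With this decomposition in hand, your two inclusions $S(G,p)\subseteq S(G,q)$ and $\iota(S(G_0+uv,p_0))\subseteq S(G+uv,q)$ combine to give $S(G+uv,p)\subseteq S(G+uv,q)$. This circuit-stress subtraction is exactly the device the present paper uses in the proof of \cref{theorem:vertexredundantlylinkedstronger}, so your plan is fully in keeping with the paper's methods once this step is spelled out.
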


We will also use the following combinatorial characterisation of $2$-stress-linked pairs.

\begin{theorem}\cite[Theorem 4.15]{stresslinked}\label{theorem:2dstresslinked}
      Let $G = (V,E)$ be a graph, and $u,v \in V$. Then $\{u, v\}$ is 2-stress-linked in $G$ if and only if either $uv \in E$, or there is a $\mathcal{R}_2$-connected subgraph $H$ of $G$ with $u, v \in V(H)$ and $\kappa(u,v;H) \geq 3$.
\end{theorem}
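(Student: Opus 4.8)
The statement to prove is \cref{theorem:2dstresslinked}: $\{u,v\}$ is $2$-stress-linked in $G$ if and only if $uv \in E$, or there is an $\mathcal{R}_2$-connected subgraph $H$ of $G$ with $u,v \in V(H)$ and $\kappa(u,v;H) \geq 3$.

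\medskip

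The plan is to reduce both directions to the combinatorial characterisation of globally $2$-rigid graphs (\cref{theorem:jacksonjordan}) together with the structural results on $2$-sums and parallel connections (\cref{lemma:Mconnected2sum}) and the fact that stress-linkedness is inherited from subgraphs (\cref{lemma:stresslinkedsubgraph}). The adjacent case is trivial, so I would assume $uv \notin E$ throughout.

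\medskip

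For the \emph{sufficiency} direction, suppose $H$ is an $\mathcal{R}_2$-connected subgraph with $u,v \in V(H)$ and $\kappa(u,v;H)\geq 3$. By \cref{lemma:stresslinkedsubgraph} it suffices to show $\{u,v\}$ is $2$-stress-linked in $H$, so I can replace $G$ by $H$. The key idea is to consider $H + uv$. Since $\{u,v\}$ has three internally disjoint paths in $H$, the graph $H+uv$ is $3$-connected (or can be reduced to a $3$-connected piece containing $uv$ via Menger's theorem and $2$-sum decompositions), and since $H$ is $\mathcal{R}_2$-connected, adding an edge keeps it $\mathcal{R}_2$-connected. By \cref{theorem:jacksonjordan}, $H+uv$ is then globally $2$-rigid, so by \cref{prop:globalstresslinked} every pair of its vertices — in particular $\{u,v\}$, now an edge — is $2$-stress-linked in $H+uv$. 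One then has to transfer this to $\{u,v\}$ in $H$ itself: because $uv$ is an edge of the globally rigid graph $H+uv$ and $H$ is $\mathcal{R}_2$-connected (so $uv$ lies on an $\mathcal{R}_2$-circuit with the rest of $H$), the defining stress-inclusion condition $S(H,p)\subseteq S(H,q) \Rightarrow S(H+uv,p)\subseteq S(H+uv,q)$ follows from the global rigidity of $H+uv$.

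\medskip

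For the \emph{necessity} direction, assume $\{u,v\}$ is $2$-stress-linked in $G$ with $uv \notin E$. By \cref{theorem:stresslinked}, $\{u,v\}$ is globally $2$-linked, which gives $\kappa(u,v;G)\geq 3$, and $uv$ lies in no size-two cocircuit of $\mathcal{R}_2(G+uv)$. The task is to extract an $\mathcal{R}_2$-connected subgraph $H$ with the required connectivity. I would take the $\mathcal{R}_2$-component of $G+uv$ containing the edge $uv$, and let $H$ be obtained from it by deleting $uv$; this $H$ is $\mathcal{R}_2$-connected by construction (modulo handling the case where the component is a single $\mathcal{R}_2$-circuit), and the absence of a size-two cocircuit through $uv$ prevents the component from splitting in a way that would destroy $\kappa(u,v;H)\geq 3$. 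Verifying that this $H$ genuinely retains three internally disjoint $u,v$-paths, rather than merely inheriting them from $G$, is where the cocircuit condition of \cref{theorem:stresslinked}\emph{(c)} is essential, and this interplay between the vertex connectivity $\kappa$ and the matroidal cocircuit structure is the main obstacle. I expect the bulk of the work — and the most delicate case analysis — to be in showing that the stress-linkedness hypothesis forces the three paths to survive inside a single $\mathcal{R}_2$-connected piece, which is precisely the content that \cref{theorem:jacksonjordan} and \cref{lemma:Mconnected2sum} are designed to control via the $2$-sum decomposition of $\mathcal{R}_2$-connected graphs.
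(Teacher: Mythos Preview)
The paper does not prove \cref{theorem:2dstresslinked}; it is quoted from \cite{stresslinked} and used as a black box. So there is no ``paper's own proof'' to compare against, and your proposal has to stand on its own.

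For sufficiency, your detour through $H+uv$ is both unnecessary and unjustified. If you have reduced to the case where $H$ is $3$-connected and $\mathcal{R}_2$-connected, then \cref{theorem:jacksonjordan} already tells you that $H$ itself is globally $2$-rigid, and \cref{prop:globalstresslinked} finishes immediately. Your claimed implication ``$H+uv$ globally rigid $\Rightarrow$ $\bigl(S(H,p)\subseteq S(H,q) \Rightarrow S(H+uv,p)\subseteq S(H+uv,q)\bigr)$'' is asserted without argument, and I do not see why it should hold: global rigidity of $H+uv$ controls the stress kernel of $(H+uv,p)$, but your hypothesis only constrains the stresses of $(H,q)$. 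More seriously, the reduction to the $3$-connected case is not as clean as you suggest. Cleaving $H$ along a $2$-separator $\{x,y\}$ produces $H_1+xy$, which is $\mathcal{R}_2$-connected by \cref{lemma:Mconnected2sum} and still has $\kappa(u,v;H_1+xy)\geq 3$, but $H_1+xy$ is generally \emph{not} a subgraph of $H$ (the edge $xy$ is virtual), so \cref{lemma:stresslinkedsubgraph} does not let you pull stress-linkedness back from $H_1+xy$ to $H$. You need an additional tool here --- for instance, something like the gluing \cref{theorem:gluing} applied across the $2$-separator --- to make the induction go through.

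For necessity, your sketch is too thin to evaluate. Taking the $\mathcal{R}_2$-component $G_0$ of $G+uv$ through $uv$ and setting $H = G_0 - uv$ does not obviously give an $\mathcal{R}_2$-connected graph, and you have not explained how the absence of a size-two cocircuit through $uv$ (\cref{theorem:stresslinked}\textit{(c)}) forces $\kappa(u,v;H)\geq 3$ rather than merely $\kappa(u,v;G)\geq 3$. You yourself flag this as ``the main obstacle,'' but the proposal contains no mechanism for overcoming it.
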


We close this section by recalling some technical material about equilibrium stresses that we will need in \cref{section:stressindependent}.
Let $G$ be a graph on $n$ vertices, and fix a generic framework $(G,p)$ in $\R^d$. The \emph{$d$-dimensional shared stress nullity} of $G$ is the number \[\sharedstressnullity{d}{G} = \dim\left(\bigcap\big\{\ker(\Omega): \Omega \text{ is a stress matrix of } (G,p)\big\}\right)\]
It is known that this number is independent of the (generic) choice of $(G,p)$.
For convenience, let us also introduce the notation \[\sharedstressrank{d}{G} = n - \sharedstressnullity{d}{G}.\]
It is not difficult to see that if $\{u,v\}$ is $d$-stress linked in $G$, then $\sharedstressnullity{d}{G}=\sharedstressnullity{d}{G+uv}$ and $\sharedstressrank{d}{G}=\sharedstressrank{d}{G+uv}$ (see~\cite[Proposition 4.1]{stresslinked}).
It is also known that if $G$ has at least $d+1$ vertices, then 
\begin{equation*} d+1 \leq \sharedstressnullity{d}{G} \leq n \hspace{1em} \text{ and hence } \hspace{1em} 0 \leq \sharedstressrank{d}{G} \leq n-d-1.\end{equation*}
By a seminal result of Gortler, Healy and Thurston, $G$ is globally $d$-rigid if and only if $\sharedstressrank{d}{G}$ achieves this upper bound.

\begin{theorem}\label{theorem:ght}\cite[Theorem 4.4]{gortler.etal_2010}
      Let $G$ be a graph on  $n\geq d+2$ vertices. Then $G$ is globally $d$-rigid if and only if $\sharedstressrank{d}{G} = n - d - 1$.
\end{theorem}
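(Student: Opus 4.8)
The plan is to reduce the statement to the standard form of the Gortler--Healy--Thurston theorem, namely that a generic framework $(G,p)$ in $\R^d$ with $n\geq d+2$ vertices is globally rigid if and only if it admits an equilibrium stress whose stress matrix has the maximal possible rank $n-d-1$, and then to sketch the two directions of that characterisation. First I would record that $\sharedstressrank{d}{G}$ is exactly this maximal single--stress rank. Every stress matrix $\Omega$ satisfies $\Omega P=0$ and $\Omega \mathbf{1}=0$, so its kernel always contains the $(d+1)$-dimensional space $A$ spanned by the all-ones vector and the columns of $P$; hence $\sharedstressnullity{d}{G}\geq d+1$ and $\sharedstressrank{d}{G}\leq n-d-1$, recovering the bound already noted in the excerpt. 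Moreover the rank of a stress matrix, viewed as a linear function of the stress $\omega$ over the (finite-dimensional) stress space, attains its maximum on a Zariski-dense set, and for generic $p$ one has the standard fact that the common kernel $\bigcap_\Omega\ker\Omega$ equals the kernel of such a generic maximal-rank stress. Consequently $\sharedstressrank{d}{G}=n-d-1$ is equivalent to the existence of a single stress matrix of rank $n-d-1$ (equivalently, $\bigcap_\Omega\ker\Omega=A$).

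For the sufficiency direction (a maximal-rank stress implies global rigidity) I would run Connelly's energy argument. Fix a stress $\omega$ with $\rk\Omega=n-d-1$ and let $(G,q)$ be any framework equivalent to $(G,p)$. The quadratic form $E_\Omega(X)=\operatorname{tr}(X^\top\Omega X)=\sum_{ij\in E}\omega_{ij}\lVert x_i-x_j\rVert^2$ depends only on the squared lengths of the edges, so $E_\Omega(q)=E_\Omega(p)=\operatorname{tr}(P^\top\Omega P)=0$. Since $\rk\Omega=n-d-1$, the kernel of $\Omega$ is precisely the affine span $A$; the aim is to deduce $\Omega Q=0$, i.e.\ that $q$ is an affine image of $p$, after which preservation of the edge lengths together with the genericity of $p$ upgrades the affine image to a congruence. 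The one delicate point is that $\Omega$ need not be positive semidefinite, so $E_\Omega(q)=0$ does not immediately yield $\Omega Q=0$; this gap is closed using the genericity of $p$, which I would invoke as a black box from Connelly's work.

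The necessity direction (global rigidity implies $\sharedstressrank{d}{G}=n-d-1$) is the substantial part, and I expect it to be the main obstacle. The strategy, following Gortler, Healy and Thurston, is contrapositive: assuming the maximal stress rank is at most $n-d-2$, one constructs a generic framework equivalent but not congruent to $(G,p)$. Two ingredients are required. First, one shows that generic global rigidity is a property of $G$ alone, so that producing a single generic counterexample suffices; this is established by a semialgebraic/measure-theoretic argument on the configuration space. Second, and this is the technical heart, one analyses the measurement map $m(p)_{ij}=\lVert p_i-p_j\rVert^2$, whose differential is twice the rigidity matrix and whose cokernel is the stress space, and argues that a submaximal stress rank, reflected in extra dimensions of the common kernel $\bigcap_\Omega\ker\Omega$, leaves enough room in the fibres of $m$ to locate an equivalent generic configuration lying in a different congruence class.

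Making this last step precise is where the real difficulty lies: it relies on nontrivial facts from algebraic geometry about the dimension and structure of generic fibres of polynomial maps, and on relating the deficiency $n-d-1-\sharedstressrank{d}{G}$ to the existence of genuinely different solutions of the edge-length equations. By contrast, the reduction in the first paragraph and the sufficiency argument of the second are comparatively routine, so I would concentrate the bulk of the effort on the algebraic-geometric construction underlying necessity.
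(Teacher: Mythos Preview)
This theorem is not proved in the paper: it is quoted as \cite[Theorem 4.4]{gortler.etal_2010} and used as a black box, so there is no ``paper's own proof'' to compare against. Your proposal is a reasonable outline of the original Gortler--Healy--Thurston argument (Connelly's sufficiency direction plus the algebraic-geometric necessity direction), and the reduction identifying $\sharedstressrank{d}{G}$ with the maximal single-stress rank is correct, but none of this appears in the present paper.
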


Let us also note that $\sharedstressrank{d}{G} = 0$ if and only if $G$ is $\mathcal{R}_d$-independent, and that adding isolated vertices or $\mathcal{R}_d$-bridges to $G$ does not change the value of $\sharedstressrank{d}{G}$. Finally, note that if $G$ is an $\mathcal{R}_d$-circuit, then each generic framework $(G,p)$ has a unique nonzero stress matrix $\Omega$ up to scaling, and in this case we have $\sharedstressnullity{d}{G} = n - \rk(\Omega)$ and $\sharedstressrank{d}{G} = \rk(\Omega)$.

\section{A sufficient condition for a pair of vertices to be stress-linked}\label{section:stresslinked}

In this section we deduce \cref{theorem:vertexredundantlylinkedgloballylinked,theorem:linkedgloballylinked} from their stress linked versions \cref{theorem:vertexredundantlylinkedstronger,theorem:linkedtostresslinked}, respectively.
The main technical ingredient of our proofs is \cref{lemma:almoststress} below. This lemma also appears as~\cite[Lemma 3.12]{stresslinked}; since the proof is short, we repeat it here for completeness. 
We will need the following folklore result from linear algebra.

\begin{lemma}\label{lemma:galeduality}
      Let $A \in \R^{m \times n}$ and $B \in \R^{n \times \ell}$ be matrices such that $AB = 0$. Fix $S \subseteq \{1,\ldots,n\}$, and define $\hat{S} = \{1,\ldots,n\} - S$. If the rows of $B$ indexed by $\hat{S}$ are linearly independent, then the columns of $A$ indexed by $S$ span the column space of $A$. 
\end{lemma}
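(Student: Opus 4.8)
The plan is to exploit the block structure of the identity $AB = 0$ with respect to the partition $\{1,\dots,n\} = S \cup \hat{S}$. First I would write $A_S$ and $A_{\hat S}$ for the submatrices of $A$ consisting of the columns indexed by $S$ and $\hat S$ respectively, and $B_S$ and $B_{\hat S}$ for the submatrices of $B$ consisting of the rows indexed by $S$ and $\hat S$. Since the product $AB$ expands as a sum over the common index (the columns of $A$ and the rows of $B$ are indexed by the same set $\{1,\dots,n\}$), the hypothesis $AB = 0$ decomposes as
\[ A_S B_S + A_{\hat S} B_{\hat S} = 0, \]
so that $A_{\hat S} B_{\hat S} = -A_S B_S$. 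This already isolates the contribution of the columns indexed by $\hat S$, and the goal becomes to show that this contribution lies in the span of the columns indexed by $S$.

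The key step is to cancel $B_{\hat S}$ on the right. By hypothesis, the rows of $B$ indexed by $\hat S$ are linearly independent, so $B_{\hat S} \in \R^{|\hat S| \times \ell}$ has full row rank. Consequently $B_{\hat S} B_{\hat S}^{\top}$ is invertible, and $C := B_{\hat S}^{\top}(B_{\hat S} B_{\hat S}^{\top})^{-1}$ is a right inverse, i.e.\ $B_{\hat S} C = I$. Multiplying the identity $A_{\hat S} B_{\hat S} = -A_S B_S$ on the right by $C$ then yields
\[ A_{\hat S} = -A_S (B_S C). \]
Hence every column of $A_{\hat S}$ is a linear combination of the columns of $A_S$, so the column space of $A_{\hat S}$ is contained in the column space of $A_S$.

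To finish, I would note that the column space of $A$ is the sum of the column spaces of $A_S$ and $A_{\hat S}$; since the latter is contained in the former, the columns of $A$ indexed by $S$ already span the entire column space of $A$, which is exactly the assertion. The only real subtlety is the passage from linear independence of the rows of $B_{\hat S}$ to the existence of a right inverse, and everything else is careful bookkeeping with the block decomposition. I do not expect any genuine obstacle: the statement is essentially the observation that $AB = 0$ forces the columns of $A$ to be orthogonal (in the appropriate pairing) to the rows of $B$, and that removing a linearly independent set of rows from $B$ removes no constraints beyond those already enforced by the remaining columns of $A$.
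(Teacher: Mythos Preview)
Your argument is correct and follows essentially the same approach as the paper: both proofs use the full row rank of $B_{\hat S}$ to express $A_{\hat S}$ as $A_S$ times some matrix, thereby showing that the columns indexed by $\hat S$ lie in the span of those indexed by $S$. The only cosmetic difference is that the paper first performs column operations on $B$ to reduce $B_{\hat S}$ to an identity block, whereas you construct an explicit right inverse via $B_{\hat S}^{\top}(B_{\hat S}B_{\hat S}^{\top})^{-1}$; these are two ways of packaging the same linear-algebraic fact.
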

\begin{proof}
      The validity of the statement does not change by reordering the columns of $A$ and the rows of $B$ according to the same permutation. Thus we may assume that $S = \{1,\ldots,k\}$ for some $k$. By performing column operations on $B$, as well as possibly deleting some columns, we may also assume that $B$ is of the form 
      \[B = \begin{pmatrix}
            B' \\ I
      \end{pmatrix},\]
      where $I$ is an $(n-k) \times (n-k)$ identity matrix. 
      Since $AB=0$, we can use the $i$-th column of $B$ to express the $(k+i)$-th column of $A$ as a linear combination of its first $k$ columns, for each $i \in \{1,\ldots,n-k\}$. This shows that the first $k$ columns of $A$ indeed span its column space. 
\end{proof}

\begin{lemma}\label{lemma:almoststress}
      Let $G = (V,E)$ be a graph, and let $(G,p)$ and $(G,q)$ be frameworks in $\R^d$ such that $(G,p)$ is in general position. Fix $\omega \in S(G,p)$, and let $U \subseteq V$ be the set of vertices where the equilibrium conditions \cref{eq:equilibrium} are not satisfied for $\omega$ and $(G,q)$. If $U\neq \varnothing$, then  $|U| \geq d+2$.
\end{lemma}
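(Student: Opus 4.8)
The plan is to work with the stress matrix $\Omega$ associated to $\omega$ and to exploit its symmetry together with \cref{lemma:galeduality}. Writing $P, Q \in \R^{|V| \times d}$ for the matrices whose rows are the points $p(v)$ and $q(v)$, the hypothesis $\omega \in S(G,p)$ is equivalent to $\Omega P = 0$, and since every row of $\Omega$ sums to zero we also have $\Omega \mathbf{1} = 0$, where $\mathbf{1}$ is the all-ones vector. The key translation of the problem is that the equilibrium condition \cref{eq:equilibrium} for $\omega$ and $(G,q)$ holds at a vertex $v$ precisely when the $v$-th row of $\Omega Q$ vanishes; thus $U$ is exactly the set of vertices indexing the nonzero rows of $\Omega Q$, and the assumption $U \neq \varnothing$ says $\Omega Q \neq 0$.

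Next I would package $\mathbf{1}$ and $P$ into the single matrix $\tilde P = [\mathbf{1} \mid P] \in \R^{|V| \times (d+1)}$, so that $\Omega \tilde P = 0$. Because $\Omega$ is symmetric, transposing gives $\tilde P^{T} \Omega = 0$, and hence $Q^{T} \Omega \tilde P = 0$. I then apply \cref{lemma:galeduality} with $A = Q^{T}\Omega$ and $B = \tilde P$, noting $AB = 0$. For each $v \in V$, the $v$-th column of $A$ equals the transpose of the $v$-th row of $\Omega Q$ (again using symmetry of $\Omega$), so this column is nonzero exactly when $v \in U$.

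Suppose, for contradiction, that $U \neq \varnothing$ but $|U| \leq d+1$, and apply \cref{lemma:galeduality} with $S = V - U$, so that $\hat S = U$. Since $(G,p)$ is in general position and $|U| \leq d+1$, the points $\{p(v) : v \in U\}$ are affinely independent, which is precisely the statement that the rows of $B = \tilde P$ indexed by $\hat S = U$, namely the vectors $[1 \mid p(v)]$, are linearly independent. The lemma then yields that the columns of $A$ indexed by $S = V - U$ span the column space of $A$. But those columns are exactly the zero columns of $A$, so the column space of $A$ is trivial, forcing $A = 0$ and hence $\Omega Q = 0$; this contradicts $U \neq \varnothing$. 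Therefore $|U| \geq d+2$.

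The individual computations are all routine; the conceptual crux is twofold. First, the symmetry of $\Omega$, which converts $\Omega \tilde P = 0$ into $\tilde P^{T}\Omega = 0$, is exactly what brings the relation into the form required by \cref{lemma:galeduality}. Second, one must choose the partition in that lemma correctly, so that its hypothesis becomes the affine independence guaranteed by general position for the at-most-$(d+1)$ vertices of $U$, while its conclusion collapses the span of the zero columns of $A$ and forces $\Omega Q = 0$.
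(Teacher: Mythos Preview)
Your proof is correct and follows essentially the same approach as the paper: both arguments apply \cref{lemma:galeduality} with $B = \tilde P$ and $\hat S = U$, using general position to guarantee the required linear independence of the rows of $\tilde P$ indexed by $U$. The only cosmetic difference is that the paper takes $A = \Omega$ and then uses symmetry of $\Omega$ to pass from a column-span statement to a row-span statement (and hence to $\Omega \widetilde Q = 0$), whereas you take $A = Q^T\Omega$ so that the zero-column conclusion of \cref{lemma:galeduality} yields $Q^T\Omega = 0$ directly, and then symmetry gives $\Omega Q = 0$; both routes are equivalent.
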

\begin{proof} Suppose, for a contradiction that $1\leq |U| \leq d+1$.
      Let $\Omega \in \R^{V \times V}$ be the stress matrix of $\omega$, let $P,Q \in \R^{|V| \times d}$ be the matrices whose rows are the vectors $p(v), v \in V$ and $q(v), v \in V$, respectively, and let $\widetilde{P}$ and $\widetilde{Q}$ be obtained from $P$ and $Q$ by adding a column of ones at the end.
      
      The row of $\Omega \widetilde{Q}$ indexed by a vertex $v \in V$ is zero if and only if the equilibrium conditions \cref{eq:equilibrium} are satisfied at $v$ for $\omega$ and $(G,q)$. Hence the definition of $U$ implies that $(\Omega \widetilde{Q})_v = 0$ for all $v \in V - U$ and we can obtain the required contradiction  by showing that $\Omega \widetilde{Q} = 0$. To this end, it will suffice to show that the rows of $\Omega$ corresponding to $V - U$ span the  row space of $\Omega$. Since $\Omega$ is symmetric, this is equivalent to showing that the columns corresponding to $V - U$ span the column space of $\Omega$. Since $\Omega\widetilde{P}=0$, \cref{lemma:galeduality} implies that it will suffice to show that the rows of $\widetilde{P}$ corresponding to $U$ are linearly independent. This is equivalent to $\{p(u), u \in U\}$ being affinely independent, which holds by our assumption that $(G,p)$ is in general position and $|U| \leq d+1$.
\end{proof}

We are now ready to prove the main result of this section.

\begin{theorem}\label{theorem:vertexredundantlylinkedstronger}
      Let $G = (V,E)$ be a graph on at least $d+2$ vertices, $u,v \in V$  and put \[U = \{z \in V - \{u,v\} : \{u,v\} \text{ is not } d\text{-linked in } G-z\}.\]
      If $|U| \leq d-1$, then $\{u,v\}$ is $d$-stress-linked in $G$.
\end{theorem}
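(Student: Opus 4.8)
The plan is to verify the two parts of the definition of $d$-stress-linkedness directly. We may assume $uv \notin E$, since adjacent pairs are $d$-stress-linked by definition. First I would check that $\{u,v\}$ is $d$-linked in $G$. Since $|V| \geq d+2$ we have $|V - \{u,v\}| \geq d > d-1 \geq |U|$, so there is some $z_0 \in (V-\{u,v\}) - U$, i.e.\ $\{u,v\}$ is $d$-linked in $G - z_0$. As $d$-linkedness is preserved under passing to supergraphs (the edge $uv$ lies in the rigidity closure of a spanning subgraph, hence of $G$), $\{u,v\}$ is $d$-linked in $G$; in particular $r_d(G+uv) = r_d(G)$.

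It remains to prove the stress condition. Fix a generic framework $(G,p)$ and an arbitrary framework $(G,q)$ with $S(G,p) \subseteq S(G,q)$; the goal is $S(G+uv,p) \subseteq S(G+uv,q)$. Extending any stress by zero on $uv$ realises $S(G,p)$ (resp.\ $S(G,q)$) as the subspace of $S(G+uv,p)$ (resp.\ $S(G+uv,q)$) consisting of stresses that vanish on $uv$, and under this identification the hypothesis $S(G,p) \subseteq S(G,q)$ yields the corresponding inclusion inside $S(G+uv,\cdot)$. Because $\{u,v\}$ is $d$-linked we have $\dim S(G+uv,p) = \dim S(G,p) + 1$, so there is a single extra stress $\omega^* \in S(G+uv,p)$ with $\omega^*(uv) \neq 0$, and $S(G+uv,p) = S(G,p) \oplus \langle \omega^* \rangle$. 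Thus everything reduces to showing $\omega^* \in S(G+uv,q)$.

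Here is where I would invoke \cref{lemma:almoststress}, applied to the graph $G+uv$ with the general-position framework $(G+uv,p)$, the framework $(G+uv,q)$, and the stress $\omega^*$. Let $U^*$ be the set of vertices at which the equilibrium conditions fail for $\omega^*$ and $(G+uv,q)$. The lemma says that either $U^* = \varnothing$ or $|U^*| \geq d+2$. The crux of the argument, and the step I expect to be the main obstacle, is the claim that
\[ U^* \subseteq U \cup \{u,v\}, \qquad \text{so that} \qquad |U^*| \leq |U| + 2 \leq d+1 < d+2. \]
Combined with the lemma this forces $U^* = \varnothing$, i.e.\ $\omega^* \in S(G+uv,q)$, completing the proof.

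To prove the claim I would argue the contrapositive: if $z \in V - \{u,v\}$ is a vertex for which $\{u,v\}$ is $d$-linked in $G - z$, then the equilibrium condition for $\omega^*$ holds at $z$. Since $\{u,v\}$ is $d$-linked in $G-z$, the generic framework $((G-z)+uv, p)$ carries a stress $\eta$ with $\eta(uv)\neq 0$; extending $\eta$ by zero on the edges incident to $z$ yields a stress of $(G+uv,p)$ (the equilibrium at each $w \neq z$ is unaffected because $\eta$ vanishes on $wz$, and the equilibrium at $z$ is trivial). Writing $\eta = \sigma + \alpha\,\omega^*$ with $\sigma \in S(G,p)$ and comparing $uv$-coordinates gives $\alpha \neq 0$, hence $\omega^* = \alpha^{-1}(\eta - \sigma)$. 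On every edge $zw$ incident to $z$ we have $\eta(zw)=0$, so $\omega^*(zw) = -\alpha^{-1}\sigma(zw)$, and therefore the equilibrium sum of $\omega^*$ at $z$ for $(G+uv,q)$ equals $-\alpha^{-1}$ times the equilibrium sum of $\sigma$ at $z$ for $(G,q)$. The latter vanishes because $\sigma \in S(G,p) \subseteq S(G,q)$, so $z \notin U^*$. This establishes the claim and with it the theorem. The delicate point throughout is bookkeeping which stresses live on $G$, on $G-z$, and on $G+uv$, and applying the hypothesis $S(G,p)\subseteq S(G,q)$ precisely at the vertex $z$.
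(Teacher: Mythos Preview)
Your proof is correct and follows essentially the same approach as the paper: for each $z\notin U\cup\{u,v\}$ you use $d$-linkedness in $G-z$ to produce a stress of $(G+uv,p)$ that vanishes on all edges at $z$, subtract to land in $S(G,p)\subseteq S(G,q)$, and then invoke \cref{lemma:almoststress} to rule out a failure set of size at most $d+1$. The only cosmetic difference is that you first reduce to a single generator $\omega^*$ of $S(G+uv,p)/S(G,p)$, whereas the paper treats an arbitrary $\omega$ with $\omega(uv)=1$ directly; the underlying mechanism is identical.
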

\begin{proof}
      If $uv \in E$, then the statement is trivial, so let us assume that $u$ and $v$ are nonadjacent in $G$. Note that since $|V| \geq d+2$, and $|U| \leq d-1$, there is at least one vertex $z$ for which $\{u,v\}$ is $d$-linked in $G-z$, and in particular, $\{u,v\}$ is $d$-linked in $G$. Let $(G,p)$ be a generic framework of $G$ in $\R^d$ and let $(G,q)$ be a framework in $\R^d$ with $S(G,p) \subseteq S(G,q)$. By a slight abuse of notation, let us view $S(G,p)$ and $S(G,q)$ as subsets of $S(G+uv,p)$ and $S(G+uv,q)$, respectively.
      
      Our goal is to show that $S(G+uv,p) \subseteq S(G+uv,q)$. To this end, let us fix a stress $\omega \in S(G+uv,p)$. If $\omega(uv) = 0$, then $\omega \in S(G,p) \subseteq S(G+uv,q)$. Hence let us assume that $\omega(uv) \neq 0$; by scaling, we may suppose that $\omega(uv) = 1$. 
      We will show that the equilibrium conditions \cref{eq:equilibrium} hold for $\omega$ and $(G+uv,q)$ at each vertex $z \in V \sm (U\cup \{u,v\})$. It will then follow from \cref{lemma:almoststress} that $\omega \in S(G+uv,q)$.
      
      Let us thus fix a vertex $z \in V \sm (U\cup \{u,v\})$. Since $\{u,v\}$ is $d$-linked in $G-z$, there exists an $\mathcal{R}_d$-circuit $C$ in $G-z+uv$ with $uv \in E(C)$. Hence there is a (unique) nonzero equilibrium stress $\omega_C$ of $(G+uv,p)$ that is supported on $E(C)$ and satisfies $\omega_C(uv) = 1$.
      
      Let us consider $\omega' = \omega - \omega_C$. Since $\omega'(uv) = 0$, we have $\omega' \in S(G,p) \subseteq S(G,q)$. Moreover, since $\omega_C$ is identically zero on the set  $\delta(z)$ of all edges of $G$ which are incident to $z$, we have $\omega'|_{\delta(z)} = \omega|_{\delta(z)}$. Hence the equilibrium conditions \cref{eq:equilibrium} at vertex $z$ hold for  $\omega$ and $(G,q)$ if and only if they hold for $\omega'$ and $(G,q)$. Since the latter is a stress of $(G,q)$, the conditions for $\omega'$ are satisfied at $z$, and hence so are the conditions for $\omega$, as required.   
\end{proof}

\cref{theorem:vertexredundantlylinkedgloballylinked} follows immediately from \cref{theorem:stresslinked}(a) and \cref{theorem:vertexredundantlylinkedstronger}. By recalling that a graph is $d$-rigid (resp.\ globally $d$-rigid) if and only if every pair of its vertices is $d$-linked (resp.\ $d$-stress-linked), we can also deduce the following strengthening of \cref{theorem:shinichi}.

\begin{theorem}\label{corollary:shinichistronger}
      Let $G = (V,E)$ be a graph on at least $d+2$ vertices and define \[U = \{v \in V: G-v \text{ is not } d\text{-rigid}\}\]
      If $|U| \leq d-1$, then $G$ is globally $d$-rigid. 
\end{theorem}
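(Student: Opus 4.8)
The plan is to deduce \cref{corollary:shinichistronger} from \cref{theorem:vertexredundantlylinkedstronger} together with \cref{prop:globalstresslinked}, using the dictionary between global rigidity and stress-linkedness of all vertex pairs. The key observation is that the hypothesis ``$G-v$ is not $d$-rigid'' for a vertex $v$ should be transferred to a statement about individual pairs being $d$-linked. First I would show that if $G-v$ is $d$-rigid, then $\{u,w\}$ is $d$-linked in $G-v$ for every pair $u,w \in V - \{v\}$, which is immediate since $d$-rigidity means every pair of vertices is $d$-linked. Consequently, for a fixed pair $\{u,w\}$, the set $U_{u,w} = \{z \in V - \{u,w\} : \{u,w\} \text{ is not } d\text{-linked in } G-z\}$ is contained in $U = \{v : G-v \text{ is not } d\text{-rigid}\}$ (we may need $G-z$ to have at least $d+1$ vertices for the notion to behave, which holds since $|V| \geq d+2$).

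The main step is then as follows. Fix an arbitrary pair $\{u,w\}$ of vertices of $G$. By the inclusion above, $|U_{u,w}| \leq |U| \leq d-1$. Since $G$ has at least $d+2$ vertices, \cref{theorem:vertexredundantlylinkedstronger} applies directly to the pair $\{u,w\}$ and yields that $\{u,w\}$ is $d$-stress-linked in $G$. As this holds for every pair of vertices, \cref{prop:globalstresslinked} immediately gives that $G$ is globally $d$-rigid.

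The only real subtlety I anticipate is making the inclusion $U_{u,w} \subseteq U$ fully rigorous, in particular confirming that the two notions of $U$ interact correctly: one must check that for each $z \notin U$, the graph $G-z$ is $d$-rigid and hence \emph{every} pair—including $\{u,w\}$—is $d$-linked in $G-z$. There is a minor edge case to verify, namely that $G-z$ has at least $d+1$ vertices so that $d$-rigidity genuinely forces all pairs to be $d$-linked; this is guaranteed by $|V| \geq d+2$. I expect this bookkeeping to be the main (though modest) obstacle, after which the result follows cleanly. An alternative, essentially equivalent route would be to invoke \cref{theorem:vertexredundantlylinkedgloballylinked} pairwise and then apply the characterisation of global $d$-rigidity as all pairs being globally $d$-linked; this bypasses the explicit use of stress-linkedness but relies on the same combinatorial inclusion.
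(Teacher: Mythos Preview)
Your proposal is correct and matches the paper's approach exactly: the paper deduces \cref{corollary:shinichistronger} in one line by noting that $d$-rigidity of $G-z$ means every pair is $d$-linked in $G-z$, so the set $U_{u,w}$ from \cref{theorem:vertexredundantlylinkedstronger} sits inside $U$ for every pair $\{u,w\}$, and then \cref{prop:globalstresslinked} finishes. Your write-up simply unpacks this sentence, and the minor bookkeeping you flag (that $|V|\ge d+2$ ensures $G-z$ has at least $d+1$ vertices) is handled correctly.
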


Let us note that \cref{corollary:shinichistronger} can also be obtained by carefully following the proof of \cref{theorem:shinichi} in~\cite{tanigawa_2015}.

To deduce \cref{theorem:linkedgloballylinked} from \cref{theorem:vertexredundantlylinkedstronger}, we need the following easy lemma.

\begin{lemma}\label{lemma:d+1linked}
      Let $G = (V,E)$ be a graph, and $u,v \in V$ be a pair of vertices which  is $\mathcal{R}_{d+1}$-linked in $G$. Then $\{u,v\}$ is vertex-redundantly $d$-linked in $G$.
\end{lemma}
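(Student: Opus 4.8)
The plan is to reduce the statement to a monotonicity property of $\mathcal{R}_{d+1}$-bridges via Whiteley's coning operation, as captured by part \emph{(a)} of \cref{lem:bridges}. Recall that $\{u,v\}$ being $\mathcal{R}_{d+1}$-linked in $G$ means precisely that $uv$ is not an $\mathcal{R}_{d+1}$-bridge in $G+uv$, i.e.\ that $uv$ lies in some $\mathcal{R}_{d+1}$-circuit contained in $G+uv$. I need to show that for every $z \in V - \{u,v\}$ the pair $\{u,v\}$ is $d$-linked in $G-z$, equivalently that $uv$ is not an $\mathcal{R}_d$-bridge in $(G-z)+uv$. If $uv \in E$ this is immediate, so I would assume $u$ and $v$ are nonadjacent.

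Fix $z \in V - \{u,v\}$ and write $H = (G-z)+uv$, a graph on vertex set $V - z$. The idea is to pass to its cone $H*z$, obtained by reintroducing $z$ as an apex joined to all of $V - z$. Two elementary observations drive the argument: first, coning and edge-addition commute, so $H*z = ((G-z)*z)+uv$; second, $G$ is a subgraph of $(G-z)*z$, since in the cone the apex $z$ is joined to \emph{every} vertex of $G-z$, in particular to all of $N_G(z)$. Consequently $G+uv$ is a subgraph of $H*z$ on the common vertex set $V$. Now I would use monotonicity of non-bridges: because $uv$ lies in an $\mathcal{R}_{d+1}$-circuit contained in $G+uv$, and $G+uv \subseteq H*z$, that same circuit witnesses that $uv$ is not an $\mathcal{R}_{d+1}$-bridge in $H*z$. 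Applying part \emph{(a)} of \cref{lem:bridges} to the graph $H$ with cone apex $z$ then gives that $uv$ is an $\mathcal{R}_d$-bridge in $H$ if and only if it is an $\mathcal{R}_{d+1}$-bridge in $H*z$; since the latter fails, $uv$ is not an $\mathcal{R}_d$-bridge in $(G-z)+uv$, so $\{u,v\}$ is $d$-linked in $G-z$. As $z$ was arbitrary, $\{u,v\}$ is vertex-redundantly $d$-linked in $G$.

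I do not expect a serious obstacle here; the proof is essentially bookkeeping that combines coning with the containment $G \subseteq (G-z)*z$. The one point requiring care is verifying the two identifications $H*z = ((G-z)*z)+uv$ and $G+uv \subseteq H*z$ as subgraphs of the complete graph on $V$, together with the fact that being contained in a circuit is preserved under passage to a supergraph on the same vertex set --- which is exactly the circuit characterisation of $\mathcal{R}_{d+1}$-bridges recalled in the preliminaries.
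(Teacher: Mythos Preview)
Your proposal is correct and follows essentially the same route as the paper: fix $z$, pass to the cone $(G-z)*z$, use the containment $G\subseteq (G-z)*z$ to transport the $(d+1)$-linked property, and then invoke \cref{lem:bridges}(a). The only difference is that you phrase everything explicitly in terms of bridges and circuits whereas the paper stays in the ``linked'' language, but the argument is identical.
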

\begin{proof}
      Fix $z \in V - \{u,v\}$, and consider the cone graph $G_z=(G-z)*z$. 
      Since $\{u,v\}$ is $\mathcal{R}_{d+1}$-linked in $G$, it is also $\mathcal{R}_{d+1}$-linked in $G_z$.
      \cref{lem:bridges}(a) now implies that $\{u,v\}$ is $d$-linked in $G-z$.
\end{proof}

By combining \cref{theorem:vertexredundantlylinkedstronger} and \cref{lemma:d+1linked}, we obtain the following result, which immediately implies \cref{theorem:linkedgloballylinked}.

\begin{theorem}\label{theorem:linkedtostresslinked}
      Let $G = (V,E)$ be a graph, and let $u,v \in V$ be a pair of vertices. If $\{u,v\}$ is $(d+1)$-linked in $G$, then $\{u,v\}$ is $d$-stress-linked in $G$.  
\end{theorem}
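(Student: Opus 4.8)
The plan is to assemble the statement directly from the two results preceding it, as the surrounding text suggests. The key observation is that \cref{lemma:d+1linked} converts the hypothesis that $\{u,v\}$ is $(d+1)$-linked in $G$ into the assertion that $\{u,v\}$ is $d$-linked in $G-z$ for \emph{every} $z \in V - \{u,v\}$. In the notation of \cref{theorem:vertexredundantlylinkedstronger}, this is exactly the statement that the exceptional set $U = \{z \in V - \{u,v\} : \{u,v\} \text{ is not } d\text{-linked in } G-z\}$ is empty. Since $d$ is a positive integer, $|U| = 0 \leq d-1$, so \cref{theorem:vertexredundantlylinkedstronger} applies and yields that $\{u,v\}$ is $d$-stress-linked in $G$, which is the desired conclusion.

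The only genuine bookkeeping is to verify the hypothesis of \cref{theorem:vertexredundantlylinkedstronger} that $G$ has at least $d+2$ vertices. First I would dispose of the case $uv \in E$, in which $\{u,v\}$ is $d$-stress-linked by definition and nothing is to be proved. So assume $u$ and $v$ are nonadjacent. Here I would argue that the $(d+1)$-linkedness hypothesis already forces $|V| \geq d+2$: if instead $|V| \leq d+1$, then $G + uv$ is a subgraph of $K_{d+1}$ and is therefore $\mathcal{R}_{d+1}$-independent, so every edge of $G+uv$ — in particular $uv$ — is an $\mathcal{R}_{d+1}$-bridge. But $\{u,v\}$ being $(d+1)$-linked means precisely that $uv$ is \emph{not} an $\mathcal{R}_{d+1}$-bridge in $G+uv$, a contradiction. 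Hence $|V| \geq d+2$ and the application of \cref{theorem:vertexredundantlylinkedstronger} is legitimate.

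I do not expect a real obstacle in the deduction itself, since all the technical weight is already carried by the two cited results: \cref{lemma:d+1linked} packages a short coning argument through \cref{lem:bridges}(a), while \cref{theorem:vertexredundantlylinkedstronger} carries the stress-theoretic content through \cref{lemma:almoststress}. The single point requiring care is the vertex-count bookkeeping described above, ensuring the boundary case $|V| \leq d+1$ is ruled out before \cref{theorem:vertexredundantlylinkedstronger} is invoked and confirming that $U = \varnothing$ meets the bound $|U| \leq d-1$ for every admissible $d$. As a closing remark, the $(d+1)$-linked to globally $d$-linked statement \cref{theorem:linkedgloballylinked} then follows at once, since $d$-stress-linked pairs are globally $d$-linked by \cref{theorem:stresslinked}(a).
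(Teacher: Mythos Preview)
Your proposal is correct and follows exactly the paper's approach: the paper simply states that the theorem is obtained ``by combining \cref{theorem:vertexredundantlylinkedstronger} and \cref{lemma:d+1linked}''. Your extra bookkeeping verifying $|V|\geq d+2$ (via the observation that nonadjacent $(d+1)$-linked pairs cannot live in an $\mathcal{R}_{d+1}$-independent graph such as a subgraph of $K_{d+1}$) is a welcome detail that the paper leaves implicit.
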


To close this section, we note that \cref{theorem:vertexredundantlylinkedstronger} implies the following ``gluing theorem'' from~\cite{stresslinked}.

\begin{theorem}\label{theorem:gluing}\cite[Theorem 4.12]{stresslinked}
      Let $G = (V,E)$ be the union of the graphs $G_1 = (V_1,E_1), G_2 = (V_2,E_2)$ with $|V_1 \cap V_2| \leq d+1,$ and let $u,v \in V_1 \cap V_2$ be a pair of vertices such that $\{u,v\}$ is $d$-linked in both $G_1$ and $G_2$. Then $\{u,v\}$ is $d$-stress-linked in $G$.
\end{theorem}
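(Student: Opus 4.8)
The plan is to deduce \cref{theorem:gluing} directly from \cref{theorem:vertexredundantlylinkedstronger} by verifying the hypothesis of the latter. Since $\{u,v\}$ is $d$-linked in $G_1$ (a subgraph of $G$), it is $d$-linked in $G$, so it remains to control the set
\[
U = \{z \in V - \{u,v\} : \{u,v\} \text{ is not } d\text{-linked in } G-z\}
\]
and show $|U| \leq d-1$. First I would observe that if $z \notin V_1 \cap V_2$, then one of $G_1, G_2$ is entirely contained in $G-z$; without loss of generality $G_1 \subseteq G - z$, and since $\{u,v\}$ is $d$-linked in $G_1$, it is $d$-linked in $G-z$. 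Hence every vertex of $U$ must lie in $V_1 \cap V_2 - \{u,v\}$. Because $|V_1 \cap V_2| \leq d+1$ and $u,v \in V_1 \cap V_2$, this already gives $|U| \leq |V_1 \cap V_2| - 2 \leq d-1$, which is exactly the bound required.

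The remaining step is to check that $G$ has at least $d+2$ vertices, as demanded by \cref{theorem:vertexredundantlylinkedstronger}. This is where a small amount of care is needed: if $V = V_1 \cap V_2$ then $G$ could be small. However, $\{u,v\}$ being $d$-linked in $G_1$ forces $\kappa(u,v;G_1) \geq d$ (a linked pair needs $d$ internally disjoint paths), so $G_1$ — and hence $G$ — contains at least $d$ internally disjoint $u,v$-paths plus the two endpoints; combined with $u,v$ being nonadjacent in the interesting case, one gets enough vertices. If $G$ has fewer than $d+2$ vertices, then the pair is $d$-linked in a graph on at most $d+1$ vertices, in which case either $uv \in E(G)$ (and the conclusion is trivial) or the pair is $d$-stress-linked by the standard base-case analysis, so the statement holds directly. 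Once $|V| \geq d+2$ is established, applying \cref{theorem:vertexredundantlylinkedstronger} with the bound $|U| \leq d-1$ yields that $\{u,v\}$ is $d$-stress-linked in $G$.

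I expect the main obstacle to be the bookkeeping in the boundary case $|V(G)| \leq d+1$, where \cref{theorem:vertexredundantlylinkedstronger} does not apply as stated; here one must argue separately, either by invoking that a $d$-linked pair on few vertices is automatically $d$-stress-linked, or by adding a dummy vertex and coning to reduce to the generic case. The core combinatorial estimate $|U| \leq d-1$, by contrast, is immediate from the overlap bound $|V_1 \cap V_2| \leq d+1$ and the fact that deleting a vertex outside the overlap leaves one of the two pieces intact. This is a clean illustration that \cref{theorem:vertexredundantlylinkedstronger} genuinely generalises the gluing theorem: the gluing hypothesis is precisely engineered so that the obstruction set $U$ is confined to the small intersection $V_1 \cap V_2$.
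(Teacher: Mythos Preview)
Your approach is exactly the paper's: show $U\subseteq (V_1\cap V_2)\setminus\{u,v\}$ and invoke \cref{theorem:vertexredundantlylinkedstronger}. One remark on the boundary case you worry about: if $|V|\leq d+1$ then $G$ is $\mathcal{R}_d$-independent (since $K_{d+1}$ is), so a $d$-linked pair must be adjacent and the conclusion is immediate---there is no need for coning or a separate ``standard base-case analysis,'' and the claim about $\kappa(u,v;G_1)\geq d$ is unnecessary.
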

\begin{proof}
      The hypothesis that $\{u,v\}$ is $d$-linked in both $G_1$ and $G_2$ implies that $\{u,v\}$ is $d$-linked in $G-z$ for all $z \in V - (V_1 \cap V_2)$. Since $|(V_1 \cap V_2) - \{u,v\}| \leq d-1$, the theorem now follows from \cref{theorem:vertexredundantlylinkedstronger}.
\end{proof}

We note that the proof of \cref{theorem:gluing} in~\cite{stresslinked} is also based on \cref{lemma:almoststress} and uses similar ideas as our proof of \cref{theorem:vertexredundantlylinkedstronger}.

\section{Stress-independent graphs}\label{section:stressindependent}

Recall from the Introduction that a graph $G$ is \emph{$d$-stress-independent} if, for every edge $uv$ of $G$, $\{u,v\}$ is not $d$-stress-linked in $G-uv$. Our main motivation for studying this graph class is given by the following proposition, which is an immediate consequence of \cref{theorem:stresslinked}.
\begin{proposition}\label{cor:d-stress}
      If $G$ is minimally $(d+1)$-connected, minimally globally $d$-rigid, or minimally $\mathcal{R}_d$-bridgeless, then $G$ is $d$-stress-independent.
\end{proposition}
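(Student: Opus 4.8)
The plan is to prove the contrapositive for each of the three graph classes: assuming $G$ contains an edge $uv$ such that $\{u,v\}$ is $d$-stress-linked in $G - uv$, I will show that $G$ cannot be minimally $(d+1)$-connected, minimally globally $d$-rigid, or minimally $\mathcal{R}_d$-bridgeless. The unifying tool is \cref{theorem:stresslinked}, which hands us three consequences of $d$-stress-linkedness of $\{u,v\}$ in $G-uv$: high connectivity ($\kappa(u,v;G-uv) \geq d+1$), global $d$-linkedness, and the cocircuit condition that $uv$ is not in any size-two cocircuit of $\mathcal{R}_d(G)$. For each class, the strategy is to identify which of these three consequences directly obstructs minimality, i.e.\ shows that deleting the edge $uv$ preserves the defining property, contradicting minimality.

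First I would handle the minimally $\mathcal{R}_d$-bridgeless case, since it should be the most transparent. By definition, $G$ is minimally $\mathcal{R}_d$-bridgeless exactly when every edge lies in a size-two cocircuit of $\mathcal{R}_d(G)$. But \cref{theorem:stresslinked}(c) applied to $\{u,v\}$ stress-linked in $G-uv$ tells us that $uv$ is \emph{not} contained in any size-two cocircuit of $\mathcal{R}_d((G-uv)+uv) = \mathcal{R}_d(G)$, which is an immediate contradiction. For the minimally globally $d$-rigid case, I would use that since $\{u,v\}$ is $d$-stress-linked in $G-uv$, part (a) gives that $\{u,v\}$ is globally $d$-linked in $G-uv$; combined with the fact that $G$ itself is globally $d$-rigid (so every \emph{other} pair is already globally $d$-linked in $G$, and by \cref{lemma:stresslinkedsubgraph} type monotonicity remains globally linked in the supergraph $G$), one concludes that every pair is globally $d$-linked in $G-uv$, so $G-uv$ is still globally $d$-rigid, contradicting minimality. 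The minimally $(d+1)$-connected case would lean on part (b): $\kappa(u,v; G-uv) \geq d+1$, which says there are $d+1$ internally disjoint $u,v$-paths in $G-uv$ avoiding the edge $uv$, so the $(d+1)$-connectivity of $G$ is not destroyed by deleting $uv$, again contradicting minimality.

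The step I expect to require the most care is the globally $d$-rigid case. The subtlety, flagged in the footnote of the excerpt, is that global $d$-linkedness is realisation-dependent and does not behave as cleanly under edge deletion as $d$-linkedness does; one must be careful that ``globally $d$-linked in $G-uv$'' genuinely upgrades to ``every pair globally $d$-linked in $G - uv$''. I would route around this by arguing directly at the level of stress-linkedness rather than global linkedness: by \cref{prop:globalstresslinked}, $G$ globally $d$-rigid means every pair is $d$-stress-linked in $G$; I then need that deleting $uv$ preserves $d$-stress-linkedness of every pair. The hypothesis gives this for the pair $\{u,v\}$ directly, and for the other pairs I would invoke \cref{lemma:stresslinkedsubgraph} in the appropriate direction together with the stress-linkedness of $\{u,v\}$ to transfer stresses across the deleted edge. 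Making this transfer airtight — ensuring the stress-matrix containments that define stress-linkedness survive the edge deletion — is where the real work lies, and it is likely that the intended short proof simply cites the three parts of \cref{theorem:stresslinked} and the elementary observation that each defining property (bridgelessness via cocircuits, connectivity via paths, global rigidity via stress-linkedness and \cref{prop:globalstresslinked}) is preserved under deleting an edge whose endpoints are already stress-linked without it.
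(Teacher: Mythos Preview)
Your approach is exactly the paper's: the proposition is stated there as ``an immediate consequence of \cref{theorem:stresslinked}'', and your contrapositive via parts (a), (b), (c) is precisely how one unpacks that sentence. The bridgeless and $(d+1)$-connected cases are handled correctly.

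Your treatment of the minimally globally $d$-rigid case, however, has a genuine wobble. You write that every other pair ``remains globally linked in the supergraph $G$'' via a \cref{lemma:stresslinkedsubgraph}-type monotonicity --- but you need these pairs to be globally linked in the \emph{subgraph} $G-uv$, and monotonicity (whether for stress-linkedness or global linkedness) runs from subgraph to supergraph, not the reverse. Your proposed ``route around'' via \cref{prop:globalstresslinked} runs into the same obstacle: knowing $\{x,y\}$ is $d$-stress-linked in $G$ does not automatically give $d$-stress-linkedness in $G-uv$.

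The worry is unnecessary, though, because part (a) already finishes the job directly at the framework level, without ever discussing other pairs. Suppose $\{u,v\}$ is globally $d$-linked in $G-uv$ and $G$ is globally $d$-rigid. Take any generic $(G-uv,p)$ and equivalent $(G-uv,q)$. Global $d$-linkedness of $\{u,v\}$ in $G-uv$ gives $\lVert p(u)-p(v)\rVert = \lVert q(u)-q(v)\rVert$, so $(G,p)$ and $(G,q)$ are equivalent; since $p$ is generic and $G$ is globally $d$-rigid, they are congruent. Hence $G-uv$ is globally $d$-rigid, contradicting minimality. This is the intended one-line application of \cref{theorem:stresslinked}(a).
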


Not every $d$-stress-independent graph belongs to one of the graph classes in \cref{cor:d-stress}. 
However, as a partial converse to \cref{cor:d-stress}, we have the following result.

\begin{lemma}\label{lem:new} 
      Let $G$ be a globally $d$-rigid graph. Then $G$ is $d$-stress-independent if and only if it is minimally globally $d$-rigid.
\end{lemma}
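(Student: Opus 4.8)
The plan is to prove the biconditional in \cref{lem:new} by establishing both directions separately, using the characterisation of global $d$-rigidity via stress-linkedness from \cref{prop:globalstresslinked} as the central tool.

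First I would handle the easier direction: if $G$ is minimally globally $d$-rigid, then $G$ is $d$-stress-independent. This is already contained in \cref{cor:d-stress}, since a minimally globally $d$-rigid graph is in particular one of the listed classes; so this direction requires no new argument beyond citing \cref{cor:d-stress}.

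For the converse, suppose $G$ is globally $d$-rigid and $d$-stress-independent, and I want to show it is minimally globally $d$-rigid, i.e.\ that $G - e$ fails to be globally $d$-rigid for every edge $e = uv$. Here the key idea is to use $d$-stress-independence directly: by definition it tells me that $\{u,v\}$ is \emph{not} $d$-stress-linked in $G - uv$. I would then invoke \cref{prop:globalstresslinked}, which says that a graph is globally $d$-rigid precisely when \emph{every} pair of its vertices is $d$-stress-linked. Applied to the graph $G - uv$, the fact that the particular pair $\{u,v\}$ is not $d$-stress-linked in $G - uv$ immediately shows that $G - uv$ is not globally $d$-rigid. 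Since this holds for every edge $uv$ of $G$, and $G$ itself is globally $d$-rigid by hypothesis, $G$ is minimally globally $d$-rigid.

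I expect this lemma to be essentially immediate once the right tools are lined up; there is no real obstacle, since both directions reduce to applying an existing characterisation. The only point demanding slight care is making sure the hypotheses match: \cref{prop:globalstresslinked} is stated for all graphs and quantifies over \emph{every} pair of vertices, so to conclude that $G-uv$ is not globally $d$-rigid it suffices to exhibit a single non-stress-linked pair, which is exactly what $d$-stress-independence supplies. One should also note the (trivial) edge case where $G$ is globally $d$-rigid but very small; but since the definitions of $d$-stress-independence and \cref{prop:globalstresslinked} impose no size restriction, no separate treatment is needed.
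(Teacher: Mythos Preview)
Your proposal is correct and follows essentially the same approach as the paper: both arguments hinge on \cref{prop:globalstresslinked}, using it to translate between ``$G-uv$ is globally $d$-rigid'' and ``$\{u,v\}$ is $d$-stress-linked in $G-uv$''. The only cosmetic difference is that you split the two directions and invoke \cref{cor:d-stress} for one of them, whereas the paper argues both directions simultaneously via a single chain of equivalences.
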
 
\begin{proof} 
      \cref{prop:globalstresslinked}  tells us that every pair of vertices of $G$ is $d$-stress-linked. Thus $G-uv$ is globally $d$-rigid for some $uv\in E$ if and only if $\{u,v\}$ is $d$-stress-linked in $G-uv$. Hence, $G$ is $d$-stress-independent if and only if $G-uv$ is not globally $d$-rigid for all $uv\in E$. 
\end{proof}

Let us also note the following basic properties of $d$-stress-independent graphs that follow from \cref{lemma:stresslinkedsubgraph} and \cref{theorem:stresslinked}(c).

\begin{lemma}\label{lemma:stressindependentoperations}
      Let $G$ be a $d$-stress-independent graph. Then
      \begin{enumerate}
            \item every subgraph of $G$ is $d$-stress-independent, and
            \item if $\{u,v\} \subseteq V(G)$ is not $d$-linked in $G$, then $G+uv$ is $d$-stress-independent.
      \end{enumerate}
\end{lemma}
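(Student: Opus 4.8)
The plan is to verify both parts directly from the definition of $d$-stress-independence together with the two cited auxiliary results, \cref{lemma:stresslinkedsubgraph} and \cref{theorem:stresslinked}(c). Recall that $G$ is $d$-stress-independent precisely when, for every edge $uv \in E(G)$, the pair $\{u,v\}$ fails to be $d$-stress-linked in $G - uv$. Both claims are contrapositive-style statements: I will assume the conclusion fails and exhibit an edge witnessing that the relevant graph is \emph{not} $d$-stress-independent, contradicting the hypothesis (in part \emph{(a)}) or producing the desired property directly (in part \emph{(b)}).

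For part \emph{(a)}, let $H$ be a subgraph of $G$ and suppose, for contradiction, that $H$ is not $d$-stress-independent. Then there is an edge $xy \in E(H)$ such that $\{x,y\}$ is $d$-stress-linked in $H - xy$. Since $H - xy$ is itself a subgraph of $G - xy$ (as $H \subseteq G$ and we have only removed the common edge $xy$), \cref{lemma:stresslinkedsubgraph} applied with $G_0 = H - xy$ and the ambient graph $G - xy$ tells us that $\{x,y\}$ is then $d$-stress-linked in $G - xy$ as well. But $xy \in E(H) \subseteq E(G)$, so this contradicts the $d$-stress-independence of $G$. The only subtlety to check is that $x,y \in V(H-xy) \subseteq V(G-xy)$, so that the hypotheses of \cref{lemma:stresslinkedsubgraph} are genuinely met; this is immediate.

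For part \emph{(b)}, suppose $\{u,v\}$ is not $d$-linked in $G$, and let $G' = G + uv$; I must show $G'$ is $d$-stress-independent, i.e.\ that for every edge $e = xy \in E(G')$ the pair $\{x,y\}$ is not $d$-stress-linked in $G' - e$. Split into two cases. If $e \neq uv$, then $e \in E(G)$, and since $G$ is $d$-stress-independent, $\{x,y\}$ is not $d$-stress-linked in $G - e$; the same should hold after adding the edge $uv$, and here I expect to invoke \cref{theorem:stresslinked}(c) as the technical engine — this is the step I anticipate being the main obstacle, since I must rule out that adjoining $uv$ creates a stress-linkage that was absent before. The natural route is to observe that being $d$-stress-linked forces $uv$ (now present as an edge of $G'-e$) to lie outside every size-two cocircuit, and to argue that the failure of $\{u,v\}$ to be $d$-linked in $G$ prevents $uv$ from participating in the relevant stress structure, so that the stress-linkedness of $\{x,y\}$ would already have been witnessed inside $G - e$. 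For the remaining case $e = uv$, we have $G' - e = G$, so the claim reduces exactly to the hypothesis that $\{u,v\}$ is not $d$-linked in $G$ — and since $d$-stress-linked pairs are in particular $d$-linked, $\{u,v\}$ cannot be $d$-stress-linked in $G = G' - uv$. This last case is immediate; the work is concentrated in reconciling the $e \neq uv$ case with the addition of the non-$d$-linked edge $uv$.
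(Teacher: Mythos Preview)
Your argument for part \textit{(a)} is correct and is exactly the intended one via \cref{lemma:stresslinkedsubgraph}. The case $e=uv$ of part \textit{(b)} is also fine: $d$-stress-linked implies $d$-linked by definition, so $\{u,v\}$ is not $d$-stress-linked in $G'-uv=G$.

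The gap is in the case $e=xy\neq uv$ of part \textit{(b)}, which you explicitly leave unfinished. Your proposed route through \cref{theorem:stresslinked}\textit{(c)} is misapplied: that theorem, applied to the pair $\{x,y\}$ being $d$-stress-linked in $(G-xy)+uv$, yields a conclusion about $xy$ in $\mathcal{R}_d(G+uv)$, not about $uv$; so the sentence ``being $d$-stress-linked forces $uv$ \ldots\ to lie outside every size-two cocircuit'' is not what the theorem gives you, and the rest of the sketch does not recover from this.

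The missing observation is more elementary. Since $\{u,v\}$ is not $d$-linked in $G$, the edge $uv$ is an $\mathcal{R}_d$-bridge in $G+uv$, and hence also in its subgraph $(G-xy)+uv$. A bridge carries zero coefficient in every equilibrium stress of a generic framework, so for generic $p$ one has $S((G-xy)+uv,p)=S(G-xy,p)$ and $S(G+uv,p)=S(G,p)$ (identifying a stress with its extension by zero on $uv$). Moreover, for \emph{any} $q$ and any stress $\omega$ with $\omega(uv)=0$, membership of $\omega$ in $S((G-xy)+uv,q)$ is equivalent to membership of its restriction in $S(G-xy,q)$, and similarly with $G+uv$ versus $G$. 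It follows that $\{x,y\}$ is $d$-stress-linked in $(G-xy)+uv$ if and only if it is $d$-stress-linked in $G-xy$; the latter is ruled out by the $d$-stress-independence of $G$. This is the argument you should supply in place of the appeal to \cref{theorem:stresslinked}\textit{(c)}.
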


The notion of $d$-stress-independent graphs was first considered in~\cite{stresslinked}. In particular, it was shown in \cite[Theorem 5.2]{stresslinked} that they satisfy condition \emph{(c)} of the Introduction (condition \emph{(b)} of \cref{theorem:mainstrongerstress} below). We can give a quick proof of this result in the following strong form using \cref{theorem:linkedtostresslinked}.

\begin{theorem}\label{theorem:mainstrongerstress}
      Let $G$ be a $d$-stress-independent graph. Then:
      \begin{enumerate}
            \item $G$ is $\mathcal{R}_{d+1}$-independent;
            \item 
            for all subgraphs $G'=(V',E')$  of $G$ with  $|V'|\geq d+2$, we have \[|E'|\leq r_d(G') + |V'| - d - 1 \leq (d+1)|V'|-\binom{d+2}{2},\] 
            and the first inequality holds with equality if and only if $G'=K_{d+2}$. 
      \end{enumerate}
\end{theorem}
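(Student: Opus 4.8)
The plan is to deduce both parts from \cref{theorem:linkedtostresslinked} together with the structural facts about $\sharedstressrank{d}{\cdot}$ recalled at the end of \cref{subsection:stresslinked}. For part \emph{(a)}, I would argue by contradiction: suppose $G$ is not $\mathcal{R}_{d+1}$-independent, so $\mathcal{R}_{d+1}(G)$ contains a circuit, and hence there is an edge $uv$ that is \emph{not} an $\mathcal{R}_{d+1}$-bridge in $G$. By the definition of $d$-linked (applied in dimension $d+1$), this says precisely that $\{u,v\}$ is $(d+1)$-linked in $G-uv$. \cref{theorem:linkedtostresslinked} then forces $\{u,v\}$ to be $d$-stress-linked in $G-uv$, contradicting $d$-stress-independence of $G$. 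Since subgraphs of $G$ are also $d$-stress-independent by \cref{lemma:stressindependentoperations}\emph{(a)}, the same conclusion applies to every subgraph.

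For part \emph{(b)}, I would first note that the second inequality $r_d(G')+|V'|-d-1\leq (d+1)|V'|-\binom{d+2}{2}$ follows immediately from Maxwell's bound \cref{eq:Rdsparsity}, namely $r_d(G')\leq d|V'|-\binom{d+1}{2}$, since $d|V'|-\binom{d+1}{2}+|V'|-d-1=(d+1)|V'|-\binom{d+2}{2}$. The substance is the first inequality $|E'|\leq r_d(G')+|V'|-d-1$, which I would rewrite using $\sharedstressrank{d}{\cdot}$. The key identity to establish is
\[
|E'| = r_d(G') + \sharedstressrank{d}{G'}.
\]
This should follow by building up $G'$ from an $\mathcal{R}_d$-independent spanning subgraph: adding an $\mathcal{R}_d$-bridge leaves $\sharedstressrank{d}{\cdot}$ unchanged and increases both $|E'|$ and $r_d$ by one, while each further edge increases only $|E'|$ and contributes exactly one to the dimension count of the stress matrices, matching the growth of $\sharedstressrank{d}{\cdot}$. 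Given this identity, the first inequality is equivalent to $\sharedstressrank{d}{G'}\leq |V'|-d-1$, which is exactly the upper bound on $\sharedstressrank{d}{\cdot}$ stated for graphs on at least $d+1$ vertices.

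For the equality characterisation, the first inequality holds with equality precisely when $\sharedstressrank{d}{G'}=|V'|-d-1$, which by \cref{theorem:ght} means $G'$ is globally $d$-rigid (as $|V'|\geq d+2$). Now \cref{lem:new} applies: since $G'$ is globally $d$-rigid and $d$-stress-independent (being a subgraph of $G$), it must be \emph{minimally} globally $d$-rigid. By \cref{theorem:minimallygloballyrigid}, such a graph satisfies $|E'|\leq (d+1)|V'|-\binom{d+2}{2}$ with equality iff $G'=K_{d+2}$; combined with the chain of inequalities above, equality in the first inequality together with global $d$-rigidity forces $G'=K_{d+2}$, and conversely $K_{d+2}$ clearly attains equality.

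\textbf{Main obstacle.}
I expect the main technical point to be the clean proof of the identity $|E'|=r_d(G')+\sharedstressrank{d}{G'}$, i.e.\ correctly bookkeeping how $\sharedstressrank{d}{\cdot}$ responds to edge additions. The easy regime is adding $\mathcal{R}_d$-bridges (where $\sharedstressrank{d}{\cdot}$ is invariant), but I must verify that each edge beyond an $\mathcal{R}_d$-basis raises $\sharedstressrank{d}{\cdot}$ by exactly one, rather than possibly leaving it fixed; this is where the precise dimension-counting of the shared kernel of stress matrices enters, and care is needed to ensure the increment is exactly one and never larger. The interplay with the equality case — translating $\sharedstressrank{d}{G'}=|V'|-d-1$ into global $d$-rigidity via \cref{theorem:ght} and then invoking \cref{lem:new} and \cref{theorem:minimallygloballyrigid} — is conceptually straightforward once the identity is in hand.
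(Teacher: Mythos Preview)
Your argument for part \emph{(a)} is correct and essentially identical to the paper's.

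For part \emph{(b)}, there is a genuine gap: the identity $|E'| = r_d(G') + \sharedstressrank{d}{G'}$ is \emph{false}, even for $d$-stress-independent graphs. A concrete counterexample is the $\mathcal{R}_2$-circuit $C = K_4 \oplus_2 K_4$: being a circuit, $|E(C)| - r_2(C) = 1$, yet $\sharedstressrank{2}{C} = 2$ by \cref{lemma:smallcircuits}; and $C$ is $2$-stress-independent since deleting any edge leaves an $\mathcal{R}_2$-independent graph. The source of the error is a conflation of two different quantities: adding a non-bridge edge increases the dimension of the \emph{stress space} $S(G,p)$ by exactly one (this is rank--nullity for the rigidity matrix), but $\sharedstressrank{d}{G} = |V| - \sharedstressnullity{d}{G}$ is \emph{not} the stress-space dimension and can jump by more than one. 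Your stated concern (``ensure the increment is exactly one and never larger'') is exactly backwards.

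What \emph{is} true --- and is the argument of \cite[Theorem 5.2]{stresslinked}, referenced just before the statement --- is the inequality $|E'| - r_d(G') \leq \sharedstressrank{d}{G'}$ for $d$-stress-independent $G'$: each redundant edge, when added, has endpoints that are linked but not $d$-stress-linked, and this forces $\sharedstressrank{d}{\cdot}$ to strictly increase (cf.\ the chain of strict inequalities in the proof of \cref{lemma:3n-7case}). This inequality suffices for everything you want: combined with $\sharedstressrank{d}{G'} \leq |V'| - d - 1$ it yields the first bound in \emph{(b)}, and equality then forces $\sharedstressrank{d}{G'} = |V'| - d - 1$, so your invocation of \cref{theorem:ght}, \cref{lem:new} and \cref{theorem:minimallygloballyrigid} goes through unchanged.

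The paper itself takes a different and shorter route to \emph{(b)} that avoids $\sharedstressrank{d}{\cdot}$ altogether: add a set $F$ of $\mathcal{R}_d$-bridges to $G'$ to make it $d$-rigid (preserving $d$-stress-independence by \cref{lemma:stressindependentoperations}\emph{(b)}), apply part \emph{(a)} to conclude $G'+F$ is $\mathcal{R}_{d+1}$-independent, and read off the bound from Maxwell's count \cref{eq:Rdsparsity} in dimension $d+1$ after subtracting $|F|$. For the equality case, $G'+F$ is then minimally $(d+1)$-rigid, hence globally $d$-rigid by \cref{theorem:rigidgloballyrigid}, which forces $F=\varnothing$; the argument then concludes exactly as in your plan. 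The advantage of the paper's route is that it reuses \emph{(a)} directly rather than re-deriving a separate stress-based inequality; the advantage of the (corrected) $\sharedstressrank{d}{\cdot}$ route is that it makes the role of \cref{theorem:ght} more transparent in the equality characterisation.
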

\begin{proof}
      We prove part \emph{(a)} by showing that every edge $uv$ of $G$ is an $\mathcal{R}_{d+1}$-bridge in $G$. This follows by applying \cref{theorem:linkedtostresslinked} to $G-uv$ and using the fact that $\{u,v\}$ is not $d$-stress linked in $G-uv$ since $G$ is $d$-stress-independent. 
      
      For part \emph{(b)}, let us add a set $F$ of $\mathcal{R}_d$-bridges to $G'$ so that $G'+F$ is $d$-rigid. By \cref{lemma:stressindependentoperations}(b), $G'+F$ is $d$-stress-independent, and hence by part \emph{(a)}, it is $\mathcal{R}_{d+1}$-independent. A simple calculation now gives
      \begin{align*}
            |E'| = |E(G'+F)| - |F| &\leq (d+1)|V'|-\binom{d+2}{2} - |F| \\ &= r_d(G'+F) + |V'| - d - 1 - |F| \\ &= r_d(G') + |V'| - d - 1, 
      \end{align*}
      as claimed. If equality holds, then $G'+F$ is minimally $(d+1)$-rigid, and hence globally $d$-rigid by \cref{theorem:rigidgloballyrigid}. Since globally $d$-rigid graphs are $\mathcal{R}_d$-bridgeless by Hendrickson's theorem (or alternatively, by \cref{theorem:mconnected}), we must have $F = \varnothing$. Thus $G'$ is globally $d$-rigid and $d$-stress-independent, so \cref{lem:new} tells us that $G'$ is minimally globally $d$-rigid. We can now apply \cref{theorem:minimallygloballyrigid} to deduce that $G'=K_{d+2}$.
\end{proof}

Since minimally globally $d$-rigid graphs are $d$-stress-independent by \cref{cor:d-stress}, \cref{theorem:globallyrigidindependent} follows immediately from \cref{theorem:mainstrongerstress}. 

We conjecture that a stronger upper bound on the number of edges 
holds for $d$-stress-independent graphs with sufficiently many vertices. 

\begin{conjecture}\label{conjecture:stressindependentsharp}
      Every $d$-stress-independent graph $G = (V,E)$ 
      with sufficiently many vertices satisfies
      \[|E| \leq (d+1)|V| - {(d+1)}^2.\]
\end{conjecture}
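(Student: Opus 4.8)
The plan is to prove the conjectured bound $|E| \leq (d+1)|V| - (d+1)^2$ for $d$-stress-independent graphs on sufficiently many vertices by combining the sparsity structure from \cref{theorem:mainstrongerstress} with an extremal analysis of how $\mathcal{R}_{d+1}$-independent graphs can saturate Maxwell's bound. Since a $d$-stress-independent graph is $\mathcal{R}_{d+1}$-independent, every subgraph $H$ with at least $d+2$ vertices already satisfies $|E(H)| \leq (d+1)|V(H)| - \binom{d+2}{2}$. The gap between $\binom{d+2}{2}$ and $(d+1)^2$ is exactly $\binom{d+1}{2}$, so the conjecture asserts a strictly better global constant, and the natural strategy is to show that the Maxwell bound for $\mathcal{R}_{d+1}$ can be met with equality only on ``small'' pieces (copies of $K_{d+2}$, by \cref{theorem:mainstrongerstress}(b)), and that these pieces cannot tile a large graph densely enough to reach the weaker Maxwell constant.

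First I would reformulate the target inequality in terms of the excess function. Writing $\mathrm{def}(G) = (d+1)|V| - \binom{d+2}{2} - |E|$ for the deficiency against the $\mathcal{R}_{d+1}$-Maxwell bound, $\mathcal{R}_{d+1}$-independence gives $\mathrm{def}(G) \geq 0$, and the conjecture is equivalent to $\mathrm{def}(G) \geq \binom{d+1}{2}$ once $|V|$ is large. So the real content is a lower bound on the deficiency. I would try to extract this from the rigidity structure: decompose $G$ along its low-connectivity cuts or its $\mathcal{R}_{d+1}$-rigid components, and argue that in a graph with many vertices there must be a ``boundary'' contribution forcing the deficiency up. A cleaner approach may be to use the equality case of \cref{theorem:mainstrongerstress}(b): the only subgraphs meeting $|E'| = r_d(G') + |V'| - d - 1$ are copies of $K_{d+2}$, which suggests analysing the maximal such $K_{d+2}$-subgraphs and their overlaps, showing that a graph achieving $\mathrm{def}(G) < \binom{d+1}{2}$ would have to be an essentially rigid union of overlapping $K_{d+2}$'s of bounded total size, contradicting $|V|$ large.

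An alternative and perhaps more robust route is to induct on $|V|$ via a reduction operation, exploiting $d$-stress-independence being preserved under subgraphs (\cref{lemma:stressindependentoperations}(a)). I would look for a vertex $w$ of small degree: since $G$ is $\mathcal{R}_{d+1}$-independent with at least $d+2$ vertices, the global count forces the existence of vertices of degree at most $d+1$ (indeed the average degree is below $2(d+1)$). For such a low-degree vertex one hopes to delete it (or perform an inverse edge-split), apply the inductive hypothesis to the smaller $d$-stress-independent graph, and track the change in both sides of the inequality. The delicate point is that the naive degree count only gives the weaker Maxwell bound, so the induction must carry extra information — perhaps a strengthened hypothesis asserting that the deficiency is large unless the graph is one of finitely many small exceptional configurations.

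The hard part will be controlling the boundary/overlap contributions so that the extra $\binom{d+1}{2}$ in the deficiency is genuinely forced for large graphs rather than concentrated in a bounded exceptional family. In particular, the mechanism that raises the constant from $\binom{d+2}{2}$ to $(d+1)^2$ is subtle: for small $|V|$ (e.g.\ $K_{d+2}$ itself, which is $d$-stress-independent and meets the $\mathcal{R}_{d+1}$-bound with equality) the stronger bound genuinely fails, which is exactly why the conjecture needs ``sufficiently many vertices.'' So any inductive or decomposition argument must locate a threshold on $|V|$ above which the dense $K_{d+2}$-like behaviour cannot persist, and I expect pinning down this threshold — and proving that no large $d$-stress-independent graph can be built entirely from tightly overlapping extremal blocks — to be the central obstacle, which is presumably why the statement is left as a conjecture here and only verified for $d=2$ in \cref{theorem:2dsharpbound}.
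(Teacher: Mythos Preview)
The statement you are addressing is \emph{Conjecture}~\ref{conjecture:stressindependentsharp}; the paper does not prove it for general $d$. It is verified only for $d=1$ (as a trivial consequence of \cref{theorem:mainstrongerstress}(b)) and for $d=2$ (\cref{theorem:2dsharpbound}). So there is no ``paper's own proof'' to compare against in the general case, and your proposal should be judged on whether it actually proves anything.

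It does not. What you have written is a pair of heuristic outlines, each with an explicitly unfilled gap. In approach (A) you assert that a graph with $\mathrm{def}(G)<\binom{d+1}{2}$ ``would have to be an essentially rigid union of overlapping $K_{d+2}$'s of bounded total size,'' but you give no argument for this; the equality case of \cref{theorem:mainstrongerstress}(b) only characterises subgraphs that meet the bound $|E'|=r_d(G')+|V'|-d-1$, and says nothing about how such subgraphs must sit inside $G$ or why near-extremal graphs must be covered by them. In approach (B) you correctly observe that deleting a vertex of degree at most $d+1$ only reproduces the Maxwell bound, and you say ``the induction must carry extra information'' without saying what that information is or why it exists. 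Your final paragraph is an honest acknowledgement that the central obstacle is unresolved --- which is to say, you have not proved the conjecture, and you know it.

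For comparison, the paper's proof of the $d=2$ case does not follow either of your outlines. It proceeds by a case analysis (whether $G$ is $\cR_2$-connected; whether it is $3$-connected; whether it is minimally $\cR_2$-connected), and the hard cases rely on tools with no known $d\geq 3$ analogue: the explicit combinatorial characterisation of $2$-stress-linked pairs (\cref{theorem:2dstresslinked}), the structure theorem for $\cR_2$-circuits (\cref{theorem:largecircuit}), and the classification of $2$-stress-independent graphs with $|E|=3|V|-7$ (\cref{lemma:3n-7case}). None of these ingredients are available in higher dimension, which is precisely why the conjecture remains open.
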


It is known that the complete bipartite graph $K_{d+1,n-d-1}$ is minimally globally $d$-rigid, and hence $d$-stress-independent, whenever $n \geq \binom{d+2}{2}+1$. This shows that the bound in \cref{conjecture:stressindependentsharp} would be best possible. We can also show that the number of vertices required in \cref{conjecture:stressindependentsharp} is at least $\binom{d+2}{2}+1$. Indeed, it is known that for $n = \binom{d+2}{2}$, $K_{d+1,n-d-1}$ is minimally $d$-rigid, and it follows from \cref{theorem:stresslinked}(c) that the graph obtained by adding an edge to this complete bipartite graph is $d$-stress-independent and has $(d+1)n - (d+1)^2 + 1$ edges. We believe that the bound in \cref{conjecture:stressindependentsharp} holds for any $\mathcal{R}_d$-connected $d$-stress-independent graph on at least $\binom{d+2}{2}+1$ vertices. However, in the non-$\mathcal{R}_d$-connected case this is not true (see the $d=2$ case below), and it is unclear what the bound on the number of vertices should be in this case.  

The $d=1$ case  of \cref{conjecture:stressindependentsharp} follows from  \cref{theorem:mainstrongerstress}(b).  In the remainder of this section, we use results and ideas from~\cite{stresslinked, GJ, Jear} to show that the $d=2$ case also holds.

\begin{theorem}\label{theorem:2dsharpbound}
      If $G = (V,E)$ is a $2$-stress-independent graph with $|V| \geq 8$, then $|E| \leq 3|V| - 9$.
\end{theorem}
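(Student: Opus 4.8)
The starting point is that \cref{theorem:mainstrongerstress}(b) already gives $|E| \le r_2(G) + |V| - 3 \le 3|V| - 6$, so the whole task is to save three further edges. If $G$ fails to be $2$-rigid then $r_2(G) \le 2|V| - 4$ and the first inequality improves to $|E| \le 3|V| - 7$, which is still short of the target, so the real content is concentrated in the rigid part of $G$. The plan is to analyse the $\mathcal{R}_2$-component decomposition of $G$: each $\mathcal{R}_2$-component is either a single $\mathcal{R}_2$-bridge or an $\mathcal{R}_2$-connected (hence $2$-rigid) subgraph on at least four vertices, each component is itself $2$-stress-independent by \cref{lemma:stressindependentoperations}(a), and distinct components pairwise meet in at most two vertices by the gluing properties of $\mathcal{R}_2$-connectivity (cf.\ \cref{lemma:Mconnected2sum}). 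The goal of this first reduction is to reduce the global bound to a localised, per-component statement, the vertex overlaps and the bridges being absorbed into an Euler-type count over the tree-like structure in which the components are glued.

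The core of the argument is then the case where $G$ is $\mathcal{R}_2$-connected, which I would treat by induction on $|V|$ via vertex connectivity. If $G$ is only $2$-connected, a $2$-vertex cut exhibits $G$ as a $2$-sum $G = G_1 \oplus_2 G_2$; by \cref{lemma:Mconnected2sum} both factors are again $\mathcal{R}_2$-connected, and (reinstating the deleted edge and using that the identified pair is separated by the cut) \cref{lemma:stressindependentoperations} shows they remain $2$-stress-independent, so the induction applies to each factor. The bookkeeping is favourable, since a $2$-sum deletes one edge and identifies two vertices, so $|E(G)| = |E(G_1)| + |E(G_2)| - 2$ and $|V(G)| = |V(G_1)| + |V(G_2)| - 2$, and two copies of the bound $3|V_i| - 9$ combine to $3|V| - 14$, leaving room to spare. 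This bottoms out at graphs that are both $3$-connected and $\mathcal{R}_2$-connected; for these \cref{theorem:jacksonjordan} gives global $2$-rigidity, and since $G$ is $2$-stress-independent, \cref{lem:new} identifies them as \emph{minimally} globally $2$-rigid. The remaining point is therefore to establish $|E| \le 3|V| - 9$ for minimally globally $2$-rigid graphs on enough vertices, which I would attack through the inverse edge-split (Henneberg-type) reductions underlying \cref{theorem:jacksonjordan}(c) together with \cref{lemma:edgesplit}, tracking edge counts and using $K_{3,|V|-3}$ as the extremal guide, borrowing the counting ideas of \cite{Jear} developed there for minimally redundantly $2$-rigid graphs.

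The main obstacle, and the reason the hypothesis $|V| \ge 8$ is unavoidable, lies in the base cases and in controlling how densely the components can be glued. Small $\mathcal{R}_2$-connected pieces such as $K_4$, and the dense $6$-vertex graphs built from two copies of $K_4$, carry far more than $3|V| - 9$ edges, so the combination step cannot simply sum per-component bounds; it must exploit the fact that $2$-stress-independence sharply restricts attachment. Concretely, if two dense components were glued along a $2$-separation in the wrong way, the shared edge $uv$ would become $2$-stress-linked in $G - uv$ through the $\mathcal{R}_2$-connected subgraph supplied by \cref{theorem:2dstresslinked}, contradicting $2$-stress-independence; quantifying this obstruction so that the per-component surpluses relative to $3|V| - 9$ cancel against the deficits created by the overlaps is the crux of the proof. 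This is also exactly where the threshold enters, since the known $7$-vertex, non-$\mathcal{R}_2$-connected examples carrying more than $3|V| - 9$ edges must be ruled out by assuming $|V| \ge 8$. Throughout, the stress-matrix and $2$-sum machinery of \cite{GJ, stresslinked} would be used to justify the structural reductions and the inheritance of $2$-stress-independence.
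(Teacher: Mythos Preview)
Your high-level architecture matches the paper's: decompose into $\mathcal{R}_2$-components, then in the $\mathcal{R}_2$-connected case split along a $2$-cut. Two substantive gaps remain. First, your inheritance step for the $2$-sum factors invokes \cref{lemma:stressindependentoperations}(b), but that lemma requires $\{u,v\}$ not to be $2$-linked in $G_i$, which fails here since $G_i + uv$ is $\mathcal{R}_2$-connected by \cref{lemma:Mconnected2sum}. The claim can be rescued by a direct argument with \cref{theorem:2dstresslinked} (rerouting any witness through the other side), but more seriously, your induction requires $|V(G_i)| \geq 8$, and you leave the small-factor cases --- which you yourself call ``the crux'' --- entirely unhandled. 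The paper does not induct on the factors at all: it works with $G_i$ (not $G_i+uv$) and uses only the weak bound $m_i \leq 3n_i - 7$ from \cref{theorem:mainstrongerstress}(b) (valid since $G_i$ is non-complete), then proves the key \cref{lemma:3n-7case} pinning the equality cases down to $\{K_4-e,\, K_4^+,\, W_5\}$, and excludes the last two via \cref{lemma:separatingpair}(b). The proof of \cref{lemma:3n-7case} is where the shared-stress-rank parameter $\sharedstressrank{2}{\cdot}$, \cref{lemma:2sum,lemma:smallcircuits}, and Jord\'an's structural result \cref{theorem:largecircuit} enter; this machinery is the missing core of your sketch.

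Second, your terminal case --- $3$-connected and $\mathcal{R}_2$-connected, hence minimally globally $2$-rigid by \cref{theorem:jacksonjordan} and \cref{lem:new} --- is correctly identified, but ``attack via inverse edge-splits with $K_{3,n-3}$ as guide'' is a hope, not a proof, and the paper does not do it this way. Instead it splits once more: if $G$ is minimally $\mathcal{R}_2$-connected, it imports Jord\'an's bound \cref{theorem:R2connectedsharp} directly; otherwise some $G-e$ is still $\mathcal{R}_2$-connected, a $2$-separation of $G-e$ is produced (via $2$-stress-independence and \cref{theorem:2dstresslinked}), and the $m_i \leq 3n_i - 7$ argument with \cref{lemma:3n-7case,lemma:separatingpair} is rerun with one extra edge to account for. (Minor slip: distinct $\mathcal{R}_2$-components share at most one vertex, not two.)
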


The bound $|V| \geq 8$ is sharp: the graph obtained by gluing two copies of $K_4$ along a vertex and adding an edge is $2$-stress-independent with $7$ vertices and $13 = 3\cdot 7 - 8$ edges. 

Before proving \cref{theorem:2dsharpbound}, let us first show how it implies the following stronger form.

\begin{theorem}\label{theorem:2dsharpboundstrong}
      Let $G$ be a $2$-stress-independent graph. For all subgraphs $G'=(V',E')$  of $G$ with  $|V'|\geq 8$, we have 
      $|E'|\leq r_2(G') + |V'| - 6 \leq 3|V'| - 9.$  
\end{theorem}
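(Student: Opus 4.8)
The plan is to adapt the proof of \cref{theorem:mainstrongerstress}(b) almost verbatim, the only change being that the generic sparsity bound $(d+1)|V'| - \binom{d+2}{2}$ is replaced by the sharper bound supplied by \cref{theorem:2dsharpbound}. First I would observe that $G'$ is itself $2$-stress-independent, by \cref{lemma:stressindependentoperations}(a). The key construction is to extend $G'$ to a $2$-rigid graph by adding a set $F$ of edges one at a time, where at each step the added edge $uv$ strictly increases the rank of the current rigidity matroid. Such an edge always exists as long as the current graph is not $2$-rigid (since $K_{|V'|}$ is $2$-rigid and thus has strictly larger $\mathcal{R}_2$-rank), and each such edge is an $\mathcal{R}_2$-bridge in the resulting graph.

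The point of adding the edges in this order is that, immediately before $uv$ is added, the pair $\{u,v\}$ is not $2$-linked in the current graph, since its addition strictly increases the rank. Hence \cref{lemma:stressindependentoperations}(b) applies at each step, and by induction the final graph $G'+F$ is again $2$-stress-independent. Since $G'+F$ is $2$-rigid on $|V'| \geq 8$ vertices, \cref{theorem:2dsharpbound} gives $|E(G'+F)| \leq 3|V'| - 9$.

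It then remains to translate this bound back to $G'$ by a short bookkeeping argument. Because each edge of $F$ is an $\mathcal{R}_2$-bridge, we have $r_2(G'+F) = r_2(G') + |F|$, and since $G'+F$ is $2$-rigid, $r_2(G'+F) = 2|V'| - 3$; together these give $|F| = 2|V'| - 3 - r_2(G')$. Substituting into $|E'| = |E(G'+F)| - |F| \leq 3|V'| - 9 - |F|$ yields exactly $|E'| \leq r_2(G') + |V'| - 6$. The second inequality $r_2(G') + |V'| - 6 \leq 3|V'| - 9$ is then just the Maxwell bound $r_2(G') \leq 2|V'| - 3$ from \cref{eq:Rdsparsity}, which holds since $|V'| \geq 2$.

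I do not expect any serious obstacle here: the entire content is the reduction from the whole-graph bound of \cref{theorem:2dsharpbound} to its subgraph-localised and rank-corrected form, which is precisely the mechanism already used in \cref{theorem:mainstrongerstress}(b). The only mild point to verify carefully is that the bridge-adding process preserves $2$-stress-independence, which is exactly what \cref{lemma:stressindependentoperations}(b) is tailored to guarantee.
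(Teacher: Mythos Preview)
Your proposal is correct and is essentially identical to the paper's own proof: both add $\mathcal{R}_2$-bridges to make $G'$ rigid, invoke \cref{lemma:stressindependentoperations} to preserve $2$-stress-independence, apply \cref{theorem:2dsharpbound} to the resulting graph, and then subtract $|F|$ to recover the rank-corrected bound. The only cosmetic difference is that you spell out the inductive use of \cref{lemma:stressindependentoperations}(b) and the bookkeeping for $|F|$ more explicitly than the paper does.
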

\begin{proof}
      Let us add a set $F$ of $\mathcal{R}_2$-bridges to $G'$ so that $G' + F$ is $2$-rigid. By \cref{lemma:stressindependentoperations}, $G'+F$ is also $2$-stress-independent, and hence \cref{theorem:2dsharpbound} implies 
      \begin{align*}
            |E(G')| &= |E(G'+F)| - |F| \\ &\leq 3|V'| - 9 - |F| = r_2(G'+F) + |V'| - 6 - |F| = r_2(G') + |V'| - 6,
      \end{align*}
      as claimed.
\end{proof}

\cref{theorem:2dsharpbound} is new even in the special case of minimally globally $2$-rigid graphs. The case of minimally redundantly $2$-rigid graphs was shown in~\cite{Jear}, but the stronger form \cref{theorem:2dsharpboundstrong} is new in this case as well. In fact, \cref{theorem:2dsharpboundstrong} appears to be new even for minimally $3$-connected graphs. See \cref{subsection:connectivity} for a discussion of how our results fit into the literature on minimally $k$-connected graphs.

We now start working towards the proof of \cref{theorem:2dsharpbound}.
We will use the following recent result of the third author.

\begin{theorem}\label{theorem:R2connectedsharp} \cite[Theorem 1.2]{Jear}
      If $G = (V,E)$ is a minimally $\mathcal{R}_2$-connected graph with $|V| \geq 7$, then $|E| \leq 3|V| - 9$.
\end{theorem}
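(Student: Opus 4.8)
The plan is to induct on $|V|$, decomposing $G$ along its $2$-separations into $3$-connected pieces. Since $G$ is $\mathcal{R}_2$-connected on at least $4$ vertices it is redundantly rigid, and hence $2$-connected with minimum degree at least three, but it need not be $3$-connected. If $\{x,y\}$ is a $2$-vertex-cut, I would write $G$ as a $2$-sum $G = G_1' \oplus_2 G_2'$, where each $G_i'$ is obtained from one side of the separation by adding the (possibly virtual) edge $xy$. \cref{lemma:Mconnected2sum} guarantees that both $G_i'$ are again $\mathcal{R}_2$-connected, and a standard transfer argument gives that they are again \emph{minimally} $\mathcal{R}_2$-connected: if some $G_i' - e$ were $\mathcal{R}_2$-connected for a non-virtual edge $e$, then $G - e$ would be $\mathcal{R}_2$-connected by \cref{lemma:Mconnected2sum}, contradicting minimality. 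Iterating, I obtain a tree-like decomposition of $G$ into $3$-connected minimally $\mathcal{R}_2$-connected pieces $H_1,\dots,H_k$ joined by $k-1$ two-sums.

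The counting is driven by the deficit $f(H) = 3|V(H)| - 9 - |E(H)|$, and the goal is $f(G) \ge 0$ whenever $|V| \ge 7$. Using $|V| = \sum_i |V(H_i)| - 2(k-1)$ and $|E| = \sum_i |E(H_i)| - 2(k-1)$, a direct computation yields the additive identity
\[ f(G) = \sum_{i=1}^{k} f(H_i) + 5(k-1). \]
Thus it suffices to bound $f$ on the pieces. I would establish two facts: first, every minimally $\mathcal{R}_2$-connected graph satisfies $f \ge -3$, with equality only for $K_4$, and with the sharper small values $f \ge -2$ on $5$ vertices and $f \ge -1$ on $6$ vertices; second, every $3$-connected minimally $\mathcal{R}_2$-connected graph on at least $7$ vertices has $f \ge 0$. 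The first fact is a finite check, since the only candidate small graphs are $K_4$ and the $5$- and $6$-vertex $\mathcal{R}_2$-circuits (denser graphs such as $K_5 - e$ fail to be minimally $\mathcal{R}_2$-connected, because deleting a further edge keeps them $\mathcal{R}_2$-connected). Feeding these into the identity and using the vertex constraint $\sum_i |V(H_i)| = |V| + 2(k-1) \ge 7 + 2(k-1)$ to rule out all-small decompositions (two copies of $K_4$ give only $6$ vertices) yields $f(G) \ge 0$; the binding case is two pieces with $|V(H_1)| = 4$ and $|V(H_2)| = 5$, giving exactly $|E| = 3|V|-9$ on $7$ vertices, and the worst multi-piece case is a path of three copies of $K_4$ on $8$ vertices.

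The main obstacle is the $3$-connected case. Here there are no further $2$-separations, so I would run a second induction via $1$-reductions at a degree-$3$ vertex $v$: delete $v$ and add an edge between two of its neighbours. \cref{lemma:edgesplit}, read in reverse, controls when such a reduction preserves $\mathcal{R}_2$-connectivity, and minimal $\mathcal{R}_2$-connectivity should force an adequate supply of reducible degree-$3$ vertices, in the spirit of Mader's theorem for minimally $k$-connected graphs. The hard part, requiring the most care, is that a $1$-reduction can destroy either $3$-connectivity or minimality and thereby create new $2$-separations; one must then either choose the reduced vertex and the added edge so as to preserve both properties, or re-enter the $2$-sum decomposition after reducing and argue through the additive identity again. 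Showing that a suitable reduction always exists, together with the direct verification of the finitely many small $3$-connected base graphs, is the crux of the argument.
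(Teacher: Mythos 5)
There is a genuine gap, and it sits exactly at the configuration that the literature singles out as exceptional. Your decomposition model --- a tree of $3$-connected pieces joined pairwise by $k-1$ two-sums, with minimality transferring to the pieces and the identity $f(G) = \sum_i f(H_i) + 5(k-1)$ --- breaks down when three or more pieces attach along the \emph{same} separating pair. Take $G = K_4^3$: three copies of $K_4$ glued along a common pair $\{a,b\}$, with the edge $ab$ deleted. This graph is minimally $\mathcal{R}_2$-connected: it is $\mathcal{R}_2$-connected by \cref{lemma:Mconnected2sum}, and every edge is incident to a vertex of degree three, so deleting any edge leaves a vertex of degree two, which no $\mathcal{R}_2$-connected graph can have. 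It has $8$ vertices and $15$ edges, hence attains equality in the theorem. Your bookkeeping fails on it: viewed as three $K_4$-pieces, $|E| = 15 \neq 18 - 2\cdot 2$, so the additive identity is false (the true value is $f(K_4^3)=0$ while your formula gives $1$, and for $K_4^q$ the discrepancy grows linearly in $q$); and the iterated decomposition produces the piece $\hat{K}_4^2$, which is \emph{not} minimally $\mathcal{R}_2$-connected --- its virtual edge $ab$ is removable, since $\hat{K}_4^2 - ab = K_4 \oplus_2 K_4$ is an $\mathcal{R}_2$-circuit. So your ``standard transfer argument'' covers only non-virtual edges, and the failure at virtual edges is not hypothetical. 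Consequently your extremal analysis (binding case only at $7$ vertices; worst multi-piece case a path of three $K_4$'s with $f=1$) misses $K_4^3$, a second equality case at $8$ vertices. Repairing this requires a Tutte-style decomposition with bond/parallel nodes, where $t$ pieces sharing a pair contribute $4t-3$ rather than $5(t-1)$.

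The second, larger gap is that the $3$-connected case --- which you yourself identify as the crux --- is not proved but only hoped for: the existence of degree-$3$ vertices in a minimally $\mathcal{R}_2$-connected graph, and of a $1$-reduction preserving both $3$-connectivity and minimal $\mathcal{R}_2$-connectivity (or a controlled way to re-enter the decomposition), are precisely the statements that carry the content of the theorem, and none of them is established. Note also that the paper itself does not prove this statement: it imports it from \cite{Jear}, and, as the paper remarks, the proof there hinges on \cref{theorem:largecircuit} --- every $\mathcal{R}_2$-connected graph on at least seven vertices contains an $\mathcal{R}_2$-circuit on at least seven vertices unless it equals $K_4^q$ or $\hat{K}_4^q$ --- combined with ear-decomposition counting. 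That structure theorem handles in one stroke both of your problem points: it isolates exactly the flower graphs $K_4^q$, $\hat{K}_4^q$ that your two-sum tree mishandles, and it substitutes for the unproved $1$-reduction machinery. Without it, or a proved replacement, the proposal is a plan rather than a proof.
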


Let us note that by \cref{theorem:2dstresslinked}, minimally $\mathcal{R}_2$-connected graphs are $2$-stress-independent, and hence \cref{theorem:R2connectedsharp} can be seen as a special case of \cref{theorem:2dsharpbound} (with a slightly weaker hypothesis on the number of vertices).

The proof of \cref{theorem:R2connectedsharp} in~\cite{Jear} hinges on the following combinatorial result about $\mathcal{R}_2$-connected graphs which we will also need.
Let $q$ be a positive integer. We denote by $\hat{K}_4^q$ the graph obtained by taking $q$ copies of $K_4$, fixing a pair of vertices from each, and gluing them along these pairs of vertices. We let $K_4^q$ be the graph obtained from $\hat{K}_4^q$ by deleting the edge connecting the separating vertex pair.%

\begin{theorem}\label{theorem:largecircuit}\cite[Theorem 1.3]{Jear}
      Let $G$ be an $\mathcal{R}_2$-connected graph on at least seven vertices. Then either $G$ contains an $\mathcal{R}_2$-circuit with at least seven vertices, or $G = K_4^q$ or $\hat{K}_4^q$ for some $q \geq 3$. 
\end{theorem}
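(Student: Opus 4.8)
\textbf{The plan} is to prove \cref{theorem:largecircuit} by induction on the number of vertices, using the structural tools for $\mathcal{R}_2$-connected graphs developed in \cref{subsection:stresslinked}, most notably \cref{lemma:edgesplit} and \cref{lemma:Mconnected2sum}. The key dichotomy to exploit is whether $G$ admits a nontrivial $2$-separation. If $G$ has no such separation, I expect it to be $3$-connected and highly structured, in which case an inverse edge-split argument should reduce it to a smaller $\mathcal{R}_2$-connected graph while controlling the vertex count. If $G$ does admit a $2$-separation, then by \cref{lemma:Mconnected2sum} it decomposes as a $2$-sum (or parallel connection) of two smaller $\mathcal{R}_2$-connected graphs, and I would recurse into each piece, carefully tracking how large circuits in the pieces combine in $G$.

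First I would set up the inductive framework. For the base case, one verifies the statement on the smallest $\mathcal{R}_2$-connected graphs of the relevant size (e.g., graphs on seven or eight vertices), checking directly that any such graph either contains an $\mathcal{R}_2$-circuit on at least seven vertices or is one of the exceptional graphs $K_4^q$ or $\hat K_4^q$. For the inductive step, I would first dispose of the $2$-separable case: if $\{a,b\}$ is a $2$-vertex-cut realising a $2$-sum decomposition $G = G_1 \oplus_2 G_2$, then \cref{lemma:Mconnected2sum} guarantees that $G_1$ and $G_2$ are both $\mathcal{R}_2$-connected. I would apply the inductive hypothesis to each side. The combinatorial heart here is the $2$-sum/parallel-connection mechanics: an $\mathcal{R}_2$-circuit in one piece lifts to an $\mathcal{R}_2$-circuit in $G$ (possibly after a parallel-connection adjustment along the gluing edge $ab$), so if either piece is large enough to contain a seven-vertex circuit, we are done. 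The delicate case is when neither piece individually contains such a circuit --- then each piece must itself be a small $K_4^q$-type graph, and I would analyse how these glue to force $G$ itself into the $K_4^q$ or $\hat K_4^q$ family.

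The remaining case is when $G$ is $3$-connected (equivalently, admits no $2$-separation). Here I would exploit the rigidity-theoretic structure: in an $\mathcal{R}_2$-connected, $3$-connected graph there must exist a vertex of degree exactly $3$ that can be removed by an inverse $2$-dimensional edge-split, yielding a smaller $\mathcal{R}_2$-connected graph $G'$ (the forward direction being \cref{lemma:edgesplit}; the reverse operation preserves $\mathcal{R}_2$-connectivity under the usual genericity/non-degeneracy conditions). Applying the inductive hypothesis to $G'$, I would then argue that a large $\mathcal{R}_2$-circuit in $G'$ extends to one in $G$ after re-splitting, and that if $G'$ is exceptional then $G$ cannot be $3$-connected unless it too is exceptional --- but the $K_4^q$ and $\hat K_4^q$ graphs have cut pairs for $q \geq 2$, giving a contradiction in the genuinely $3$-connected subcase. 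This should pin down the structure completely.

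The main obstacle I anticipate is \emph{ensuring that the inverse edge-split and the $2$-sum decomposition interact cleanly with the ``at least seven vertices'' threshold in the circuit}. The exceptional graphs $K_4^q$ and $\hat K_4^q$ are precisely the $\mathcal{R}_2$-connected graphs assembled entirely from $K_4$-blocks along shared pairs, and they contain no $\mathcal{R}_2$-circuit on seven or more vertices; the difficulty is to show that these are the \emph{only} obstructions, i.e., that any deviation from this block structure --- either a block larger than $K_4$, or a $3$-connected core --- necessarily produces a circuit on at least seven vertices. Managing the boundary cases (small $q$, the degenerate overlap of the gluing pair with the vertex being split, and the parallel-connection bookkeeping along $ab$) is where the argument will require the most care, and I would expect to invoke \cref{theorem:2dstresslinked} to relate $3$-connectivity of a subgraph to the existence of the desired large circuit.
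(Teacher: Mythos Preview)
The paper does not prove \cref{theorem:largecircuit} at all: it is imported verbatim from \cite{Jear} and used as a black box (note the sentence introducing it: ``The proof of \cref{theorem:R2connectedsharp} in~\cite{Jear} hinges on the following combinatorial result about $\mathcal{R}_2$-connected graphs which we will also need''). There is therefore no in-paper argument to compare your outline against.

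That said, your sketch has genuine gaps. In the $3$-connected case you assert that one can always find a degree-$3$ vertex admitting an inverse $2$-dimensional edge split that preserves $\mathcal{R}_2$-connectivity. This is far from automatic: \cref{lemma:edgesplit} is only the forward direction, and the known reverse-direction results (of Berg--Jord\'an type) are stated for $\mathcal{R}_2$-\emph{circuits}, not for arbitrary $\mathcal{R}_2$-connected graphs; extending them is itself nontrivial and is not available from the preliminaries of this paper. In the $2$-separable case your induction does not close: the pieces $G_1,G_2$ (with the glued edge $ab$ added) will typically have fewer than seven vertices, so the inductive hypothesis says nothing about them. You would instead need a direct analysis of small $\mathcal{R}_2$-connected graphs and a careful argument showing that circuits through $ab$ on each side combine, via the last clause of \cref{lemma:Mconnected2sum}, into a circuit of the required size in $G$ --- and that failure of this forces both sides to be built from $K_4$'s glued along the same pair. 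Finally, invoking \cref{theorem:2dstresslinked} is a red herring: that theorem characterises $2$-stress-linked pairs and has no direct bearing on circuit sizes.
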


Next, we prove a sequence of technical lemmas. The first is a simple analogue of \cref{theorem:largecircuit}. Let $W_5$ denote the wheel graph on five vertices. It is well-known that this graph is a minimally globally $2$-rigid $\mathcal{R}_2$-circuit.

\begin{lemma}\label{lemma:fivevertices}
      If $G$ is an $\mathcal{R}_2$-connected graph on at least five vertices, then $G$ contains an $\mathcal{R}_2$-circuit on at least five vertices.
\end{lemma}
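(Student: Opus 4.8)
The plan is to argue by contradiction. Suppose $G$ is $\mathcal{R}_2$-connected on at least five vertices but contains no $\mathcal{R}_2$-circuit on five or more vertices. A circuit on $k$ vertices has exactly $2k-2$ edges, and this number exceeds $\binom{k}{2}$ for $k \leq 3$ and equals it precisely when $k = 4$; hence, by Maxwell's bound \cref{eq:Rdsparsity}, the only $\mathcal{R}_2$-circuit on at most four vertices is $K_4$. Under our assumption, then, \emph{every} $\mathcal{R}_2$-circuit of $G$ is a copy of $K_4$, and the goal becomes to manufacture a circuit on at least five vertices from these, yielding the contradiction.

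First I would fix one such circuit $C_1 = K_4$ on a vertex set $\{a,b,c,d\}$ together with the edge $f = ab \in E(C_1)$. Since $|V(G)| \geq 5$ there is a vertex $w \notin V(C_1)$, and since $\mathcal{R}_2$-connected graphs have no isolated vertices, $w$ is incident to some edge $e$. By $\mathcal{R}_2$-connectedness, $e$ and $f$ lie in a common $\mathcal{R}_2$-circuit $C_2$, which by assumption is again a $K_4$. As $f \in E(C_2)$ we get $a,b \in V(C_2)$, and as $e$ is incident to $w$ we get $w \in V(C_2)\setminus V(C_1)$. Thus $C_1 \neq C_2$, $|V(C_1)\cup V(C_2)| \geq 5$, and the overlap $|V(C_1)\cap V(C_2)|$ is either $2$ or $3$ (it cannot be $4$, since $w \notin V(C_1)$). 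The choice of $f$ and $w$ is exactly what pins the overlap down to these two cases.

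It then remains to extract a large circuit from $C_1 \cup C_2$ in each case. If the overlap is $\{a,b\}$, then $C_1$ and $C_2$ share only the edge $ab$, so $C_1 \cup C_2 - ab$ is a subgraph of $G$ isomorphic to the $2$-sum $C_1 \oplus_2 C_2$ of two $\mathcal{R}_2$-circuits; by \cref{lemma:Mconnected2sum} this is itself an $\mathcal{R}_2$-circuit, spanning the $6 \geq 5$ vertices of $V(C_1)\cup V(C_2)$. If instead the overlap is a triangle, say $C_2 = \{a,b,c,w\}$, then $C_1 \cup C_2$ is the graph $K_5 - dw$ on $\{a,b,c,d,w\}$, and I would exhibit a spanning wheel $W_5$ inside it (hub $c$, rim $a,d,b,w$), which avoids the missing edge $dw$ and is an $\mathcal{R}_2$-circuit on five vertices by the fact recalled just before the lemma. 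Either way $G$ contains an $\mathcal{R}_2$-circuit on at least five vertices, contradicting the assumption.

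The main obstacle is the triangle-overlap case: there the two $K_4$'s meet in three vertices, so \cref{lemma:Mconnected2sum} does not apply, and instead one must locate a spanning circuit directly inside $K_5$ minus an edge. The key (and only nonroutine) observation is that $K_5 - dw$ contains a copy of $W_5$ whose rim cycle $a,d,b,w$ uses none of the missing edge $dw$; verifying this explicit embedding is what makes the second case go through.
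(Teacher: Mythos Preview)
Your argument is correct. Both your proof and the paper's begin from the same reduction: if every $\mathcal{R}_2$-circuit has at most four vertices, then every $\mathcal{R}_2$-circuit is a copy of $K_4$. From there the two diverge.

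The paper's route is shorter and avoids any case analysis. It observes that, since any two edges of an $\mathcal{R}_2$-connected graph lie in a common circuit, any two vertices of $G$ lie in a common $K_4$ and are therefore adjacent; hence $G$ is complete. A $K_5$ inside $G$ then visibly contains a $W_5$, giving the contradiction. Your route instead fixes one $K_4$, produces a second one overlapping it in either two or three vertices, and then extracts a large circuit in each case: the $2$-sum lemma (\cref{lemma:Mconnected2sum}) for a two-vertex overlap, and an explicit $W_5$ inside $K_5-dw$ for a three-vertex overlap. This is correct and entirely self-contained, but it imports \cref{lemma:Mconnected2sum} for a case that the paper's argument never even encounters, since once $G$ is complete one can work inside a single $K_5$. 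What your approach buys is an explicit five- or six-vertex circuit without having to argue about the global structure of $G$; what the paper's approach buys is a two-line proof with no case split and no appeal to the $2$-sum lemma.
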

\begin{proof}
      Suppose, for a contradiction, that every $\mathcal{R}_2$-circuit in $G$ is a copy of $K_4$. Since $G$ is $\mathcal{R}_2$-connected, this implies that any pair of (non-isolated) vertices of $G$ is contained in a clique, and hence are adjacent. Thus $G$ is a complete graph. Since $G$ has at least five vertices, it contains $W_5$ as an $\mathcal{R}_2$-circuit, a contradiction.
\end{proof}

\begin{lemma}\label{lemma:sixvertices}
      Every minimally globally $2$-rigid graph on six vertices is an $\mathcal{R}_2$-circuit.
\end{lemma}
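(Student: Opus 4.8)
The plan is to pin down the edge count of $G$ and show it forces $G$ to be an $\mathcal{R}_2$-circuit. First I would record the basic parameters. Since $G$ is globally $2$-rigid on six vertices, \cref{theorem:jacksonjordan} shows that $G$ is $3$-connected and $\mathcal{R}_2$-connected; being globally $2$-rigid it is in particular $2$-rigid, so $r_2(G) = 2\cdot 6 - 3 = 9$. As a connected matroid with more than one element, $\mathcal{R}_2(G)$ has no coloops, so $G$ is $\mathcal{R}_2$-bridgeless and $r_2(G-e) = 9$ for every edge $e$. A nontrivial connected matroid is dependent, giving the lower bound $|E| \geq r_2(G) + 1 = 10$, while \cref{theorem:minimallygloballyrigid} gives $|E| \leq 3\cdot 6 - \binom{4}{2} = 12$ with equality only for $K_4$; hence $|E| \in \{10, 11\}$.

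If $|E| = 10$, then $|E| = r_2(G) + 1$, so $\mathcal{R}_2(G)$ has nullity one and contains a unique circuit. Since $G$ is bridgeless, every edge lies in some circuit, hence in this unique one, and therefore $G$ is itself an $\mathcal{R}_2$-circuit. The whole difficulty is to rule out $|E| = 11$, which I expect to be the main obstacle. Suppose $|E| = 11$, so $\mathcal{R}_2(G)$ has nullity two. Using the ear-decomposition structure of $\mathcal{R}_2$-connected graphs (nullity two means one ear is added to an initial $\mathcal{R}_2$-circuit $C_0$), a short count with $|E(C_0)| = 2|V(C_0)| - 2$ shows that $C_0$ must span all six vertices and the ear is a single edge; thus $G = C_0 + e$ where $C_0$ is a six-vertex $\mathcal{R}_2$-circuit with ten edges. (Alternatively one can start from the circuit on at least five vertices provided by \cref{lemma:fivevertices} and discard the five-vertex case by the removable-edge argument below.) By minimality, $G - e = C_0$ is not globally $2$-rigid; as a circuit it is $\mathcal{R}_2$-connected, so by \cref{theorem:jacksonjordan} it is not $3$-connected.

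Next I would identify $C_0$. Every vertex of an $\mathcal{R}_2$-circuit has degree at least three, so each side of a $2$-vertex-cut of $C_0$ contains at least two non-cut vertices; with six vertices this forces the cut to split $C_0$ into two parts on four vertices each. Realising the $2$-separation as a $2$-sum and applying \cref{lemma:Mconnected2sum}, both parts are $\mathcal{R}_2$-circuits on four vertices, i.e.\ copies of $K_4$, so $C_0 = K_4 \oplus_2 K_4 = K_4^2$ is the unique non-$3$-connected $\mathcal{R}_2$-circuit on six vertices. Writing the cut vertices as $a,b$ and the remaining vertices as $c,d$ and $x,y$ (one pair in each $K_4$), $3$-connectivity of $G$ forces the added edge $e$ to join $\{c,d\}$ to $\{x,y\}$; up to symmetry $e = cx$. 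A direct check then shows that $G - ac$ is again $2$-rigid on all six vertices, is a (redundantly rigid, hence) $\mathcal{R}_2$-circuit, and is $3$-connected, so \cref{theorem:jacksonjordan} makes $G - ac$ globally $2$-rigid, contradicting the minimality of $G$. This eliminates $|E| = 11$ and completes the proof.

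The main obstacle is precisely this $|E| = 11$ analysis: the available sparsity bounds only give $|E| \leq 11$, so removing the extra edge is genuinely structural. The two delicate points are identifying $K_4^2$ as the only non-$3$-connected six-vertex $\mathcal{R}_2$-circuit (for which the minimum-degree-three property of circuits together with the $2$-sum decomposition of \cref{lemma:Mconnected2sum} is the key), and verifying that $K_4^2 + e$ always admits an edge whose deletion preserves both $3$-connectivity and $\mathcal{R}_2$-connectivity.
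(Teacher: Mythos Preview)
Your approach is genuinely different from the paper's. The paper uses the constructive part of \cref{theorem:jacksonjordan}: every globally $2$-rigid graph arises from $K_4$ by edge splits and edge additions, pure edge-split sequences yield $\mathcal{R}_2$-circuits, and if an edge addition is ever used one must later split along it to keep the result minimal. For six vertices this forces the sequence to be ``$W_5$, add an edge, split along it'', giving only two graphs to check by hand. Your route via edge counting and the structure of six-vertex $\mathcal{R}_2$-circuits is more hands-on; the $|E|=10$ case and the identification $C_0=K_4\oplus_2 K_4$ followed by the $G-ac$ check are both fine.

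There is, however, a genuine gap in the $|E|=11$ reduction. The claimed ``short count with $|E(C_0)|=2|V(C_0)|-2$'' does \emph{not} force $|V(C_0)|=6$: for an ear decomposition $C_0,C_1$ one gets $|\tilde C_1|=13-2|V(C_0)|$, which is perfectly consistent with $|V(C_0)|\in\{4,5\}$ (lobes of size $5$ or $3$). The minimality condition (E3) on the lobe does not help either, and your parenthetical alternative (``discard the five-vertex case by the removable-edge argument below'') does not apply as stated, since that argument is tailored to $K_4^2+e$, not to $W_5$ plus a degree-$3$ vertex.

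The gap is repairable. If $G$ has $11$ edges and no six-vertex $\mathcal{R}_2$-circuit, then (as you note) $G=W_5+v$ with $d(v)=3$, and a short case analysis on $N(v)\subseteq V(W_5)$ shows that in every case some $G-e$ is a $3$-connected six-vertex $\mathcal{R}_2$-circuit (for instance $G-r_ir_{i+1}$ is $W_6$ when $N(v)=\{w,r_i,r_{i+1}\}$), contradicting the assumption. Equivalently, this shows a six-vertex circuit always exists, after which your $K_4^2+e$ argument goes through. You should include this step explicitly rather than asserting the count does the work.
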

\begin{proof}
      By \cref{theorem:jacksonjordan}, every globally $2$-rigid graph on at least four vertices arises from $K_4$ by a sequence of $2$-dimensional edge split operations and edge additions. If only edge splits are used, then the graph obtained is an $\mathcal{R}_2$-circuit. If an edge addition is used, then to obtain a minimally globally $2$-rigid graph at the end of the sequence, we have to later perform an edge split along it. By combining these observations, we conclude that the only graphs we need to check are those obtained from $W_5$ by adding a new edge and then performing an edge split along it. Up to isomorphism, there are two such graphs (depending on whether the center of the wheel is a neighbour or not of the vertex we add last), and it is easy to check that neither of them is minimally globally $2$-rigid.
\end{proof}

Recall the notation $\sharedstressrank{d}{G} = |V(G)| - \sharedstressnullity{d}{G}$, where $\sharedstressnullity{d}{G}$ denotes the $d$-dimensional shared stress nullity of $G$. As we will see, for a $d$-stress-independent graph $G$, the existence of an $\mathcal{R}_d$-circuit $C$ in $G$ such that $\sharedstressrank{d}{C}$ is large allows us to give an upper bound on the number of edges in $G$.  The goal of the next two results is to characterise the $\mathcal{R}_2$-circuits $C$ with $\sharedstressrank{2}{G} \leq 2$.

\begin{lemma}\label{lemma:2sum}
      Let $C = (V,E)$ be an $\mathcal{R}_d$-circuit which is a $2$-sum of two $\mathcal{R}_d$-circuits $C_1$ and $C_2$. Then we have $\sharedstressrank{d}{C} = \sharedstressrank{d}{C_1} + \sharedstressrank{d}{C_2}$.
\end{lemma}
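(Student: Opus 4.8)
The plan is to exploit that $C$, $C_1$, and $C_2$ are all $\mathcal{R}_d$-circuits, so (as recalled at the end of \cref{subsection:stresslinked}) each admits, for a fixed generic framework, a unique nonzero stress matrix up to scaling, and $s_d(\cdot)$ equals the rank of this matrix. Write $C = C_1 \oplus_2 C_2$ with the copies glued along a common pair, so that $V(C_1)\cap V(C_2)=\{u,v\}$, and set $A := V(C_1)\setminus\{u,v\}$, $B := V(C_2)\setminus\{u,v\}$. First I would fix a generic $p$ on $V(C)$; its restrictions are generic on $V(C_1)$ and $V(C_2)$. Let $\omega_i$ be the unique stress of $(C_i,p)$, scaled so that $\omega_i(u_iv_i)=1$. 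Combining $\omega_1$ on $E(C_1)\setminus\{u_1v_1\}$ with $-\omega_2$ on $E(C_2)\setminus\{u_2v_2\}$ yields a stress of $(C,p)$: the equilibrium conditions hold automatically away from $\{u,v\}$, and at $u$ (resp.\ $v$) the contributions of the deleted edges, which equal $p(u)-p(v)$ from each side, cancel because of the opposite signs. Since $C$ is a circuit, this nonzero stress is its stress matrix up to scaling, and in block form (with index blocks $\{u,v\}$, $A$, $B$) it equals $\Omega = (\Omega_1)_{\mathrm{emb}} - (\Omega_2)_{\mathrm{emb}}$, where $\Omega_i$ is the stress matrix of $\omega_i$ on $C_i$ and the correction terms coming from the deleted edges cancel on the $\{u,v\}$-block.

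Given this, the identity $s_d(C) = s_d(C_1)+s_d(C_2)$ is equivalent (using $s_d = |V|-k_d$ on each circuit together with $|V(C)| = |V(C_1)|+|V(C_2)|-2$) to the kernel identity $\dim\ker\Omega = \dim\ker\Omega_1 + \dim\ker\Omega_2 - 2$. I would prove this via an explicit isomorphism. Let $\pi_i \colon \ker\Omega_i \to \R^{\{u,v\}}$ be restriction to the coordinates $u,v$, and let $W = \{(y,z)\in\ker\Omega_1\times\ker\Omega_2 : \pi_1(y)=\pi_2(z)\}$ be the fibre product. Each $\pi_i$ is surjective: the column space of $\tilde P_i$ (the coordinates together with the all-ones vector) lies in $\ker\Omega_i$, and its restriction to $\{u,v\}$ is the column space of the $2\times(d+1)$ matrix with rows $(p(u),1)$ and $(p(v),1)$, which has rank $2$ since $p(u)\ne p(v)$. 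Hence $(y,z)\mapsto \pi_1(y)-\pi_2(z)$ maps onto $\R^2$, giving $\dim W = \dim\ker\Omega_1 + \dim\ker\Omega_2 - 2$. The map $\Psi\colon W \to \ker\Omega$ assembling $x$ from the common value on $\{u,v\}$ and the $A$- and $B$-parts of $y$ and $z$ lands in $\ker\Omega$ (a direct check of the three block equations) and is visibly injective.

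The step needing care — and which I expect to be the main obstacle — is surjectivity of $\Psi$, that is, that every $x\in\ker\Omega$ arises this way. Setting $y=(x|_{\{u,v\}},x|_A)$ and $z=(x|_{\{u,v\}},x|_B)$, the $A$- and $B$-row blocks of $\Omega x = 0$ give exactly the two ``off-diagonal'' defining equations of $\ker\Omega_1$ and $\ker\Omega_2$, while the $\{u,v\}$-row block only shows that $\Omega_1 y$ and $\Omega_2 z$ share a common value $\xi\in\R^2$ supported on $\{u,v\}$. To conclude $y\in\ker\Omega_1$ and $z\in\ker\Omega_2$ I must show $\xi=0$; this follows from $\Omega_1\tilde P_1 = 0$, which yields $\xi^{\mathsf T}[\tilde P_1]_{\{u,v\}} = 0$, whence the same rank-$2$ fact used for the surjectivity of $\pi_1$ forces $\xi=0$. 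With $\Psi$ an isomorphism, the kernel identity follows, and therefore $s_d(C)=s_d(C_1)+s_d(C_2)$.
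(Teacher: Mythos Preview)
Your proof is correct, and it shares with the paper's proof both the stress decomposition $\Omega = \Omega_1 - \Omega_2$ (embedded in $\R^{V\times V}$, with the $\{u,v\}$-block correction terms cancelling since $\omega_1(uv)=\omega_2(uv)=1$) and the crucial linear-algebraic fact that the two rows of $\widetilde P$ indexed by $\{u,v\}$ are linearly independent.

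The execution, however, is dual to the paper's. The paper works directly with ranks: it bounds $\rk\Omega \le \rk\Omega_1 + \rk\Omega_2$ by subadditivity, then bounds $\rk\Omega$ from below by the rank of the block-diagonal submatrix $\begin{psmallmatrix}\Omega_1' & 0\\0 & \Omega_2'\end{psmallmatrix}$ on $A\cup B$, and finally invokes \cref{lemma:galeduality} twice (once for columns, once for rows via symmetry of $\Omega_i$) to show $\rk\Omega_i' = \rk\Omega_i$. You instead compute kernels, building an explicit isomorphism between $\ker\Omega$ and the fibre product of $\ker\Omega_1$ and $\ker\Omega_2$ over $\R^{\{u,v\}}$. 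Your surjectivity step---deducing $\xi=0$ from $\xi^{\mathsf T}[\widetilde P_1]_{\{u,v\}} = 0$---unpacks by hand exactly what \cref{lemma:galeduality} would give when applied to the symmetric matrix $\Omega_1$. The paper's route is a little shorter by offloading to the prepared lemma; yours is self-contained and makes the kernel structure (and the role of the fibre product) explicit.
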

\begin{proof}
      Let $V(C_1) \cap V(C_2) = \{u.v\}$, and let us fix a generic framework $(C,p)$ in $\R^d$. Since $C_i$ is an $\mathcal{R}_d$-circuit, $(C+uv,p)$ has a unique nonzero stress $\omega_i$ supported on the edges of $C_i$ and satisfying $\omega_i(uv) = 1$, for $i \in \{1,2\}$. Moreover, $\omega = \omega_1 - \omega_2$ is the unique nonzero stress of $(C,p)$, up to scalar multiplication.

      Let $\Omega_1,\Omega_2$ and $\Omega$ be the stress matrices associated to $\omega_1,\omega_2$ and $\omega$, respectively. Since $G_i$ is an $\mathcal{R}_d$-circuit, we have $\sharedstressrank{d}{C_i} = \rk(\Omega_i), i \in \{1,2\}$, and similarly, $\sharedstressrank{d}{C} = \rk(\Omega)$. Hence we only need to show that $\rk(\Omega) = \rk(\Omega_1) + \rk(\Omega_2)$. From $\Omega = \Omega_1 - \Omega_2$ we have $\rk(\Omega) \leq \rk(\Omega_1) + \rk(\Omega_2)$. To show that $\rk(\Omega) \geq \rk(\Omega_1) + \rk(\Omega_2)$ also holds, consider the submatrix $\Omega'_i$ of $\Omega_i$ formed by the rows and columns indexed by $V(C_i) - \{u,v\}$ for $i \in \{1,2\}$. Since $\Omega$ contains the block matrix 
      \[\begin{pmatrix}
            \Omega'_1 & 0 \\ 0 & \Omega'_2
      \end{pmatrix}\]
      as a submatrix, we have $\rk(\Omega) \geq \rk(\Omega'_1) + \rk(\Omega'_2)$. Hence it suffices to show that $\rk(\Omega'_i) = \rk(\Omega_i)$ for $i \in \{1,2\}$; by symmetry, we may assume that $i=1$.

      We proceed as in the proof of \cref{lemma:almoststress}. Let $P_1 \in \R^{|V(C_1)| \times d}$ be the matrix whose rows are the vectors $p(z), z \in V(C_1)$, and let $\widetilde{P_1}$ be the matrix obtained from $P_1$ by adding a column of ones at the end. Observe that the rows and columns of $\Omega_1$ corresponding to $V(C_2) - \{u,v\}$ are all zero, and hence by deleting them, the rank of $\Omega_1$ does not change. In this way, we may consider $\Omega_1$ as a $|V(C_1)| \times |V(C_1)|$ matrix. Then $\Omega_1 \widetilde{P}_1 = 0$, and the rows of $\widetilde{P}_1$ indexed by $\{u,v\}$ are linearly independent, since $\{p(u),p(v)\}$ is affinely independent. \cref{lemma:galeduality} now implies that $\rk(\Omega_1) = \rk(\Omega_1'')$, where $\Omega_1''$ is the submatrix of $\Omega_1$ obtained by deleting the columns corresponding to $p(u)$ and $p(v)$. Since $\Omega_1$ is symmetric, we also have $(\Omega_1'')^T \widetilde{P}_1 = 0$, and by using \cref{lemma:galeduality} again we deduce that \[\rk(\Omega_1') = \rk(\Omega_1'') = \rk(\Omega_1),\]
      as claimed.
\end{proof}

Let $K_4 \oplus_2 K_4$ denote the $2$-sum of two copies of $K_4$.

\begin{lemma}\label{lemma:smallcircuits}
      Let $C = (V,E)$ be an $\mathcal{R}_2$-circuit. We have
      \begin{enumerate}
            \item $\sharedstressrank{2}{C} = 1$ if and only if $C = K_4$, and
            \item $\sharedstressrank{2}{C} = 2$ if and only if $C = W_5$ or $C = K_4 \oplus_2 K_4$.
      \end{enumerate}
      In particular, if $|V| \geq 7$, then $\sharedstressrank{2}{C} \geq 3$.
\end{lemma}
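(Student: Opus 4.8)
The plan is to dichotomise on whether the $\mathcal{R}_2$-circuit $C$ is $3$-connected, and to reduce the non-$3$-connected case to smaller circuits via the $2$-sum decomposition together with the additivity of the shared stress rank established in \cref{lemma:2sum}. Throughout I would use the standard facts that every $\mathcal{R}_2$-circuit on $n$ vertices has minimum degree at least $3$ and exactly $2n-2$ edges, and in particular satisfies $n \geq 4$.

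Suppose first that $C$ is $3$-connected. Since every $\mathcal{R}_2$-circuit is $\mathcal{R}_2$-connected, \cref{theorem:jacksonjordan} shows that $C$ is globally $2$-rigid, so \cref{theorem:ght} (applicable as $n \geq 4$) yields $\sharedstressrank{2}{C} = n - 3$. Hence $\sharedstressrank{2}{C} \leq 2$ forces $n \in \{4,5\}$. I would then observe that a circuit on $4$ or $5$ vertices cannot be a $2$-sum, since the $2$-sum of two circuits has at least $4 + 4 - 2 = 6$ vertices; combining this with the edge count $2n-2$ and the minimum degree condition identifies the unique circuits on these vertex counts as $K_4$ (with $\sharedstressrank{2}{K_4} = 1$) and $W_5$ (with $\sharedstressrank{2}{W_5} = 2$).

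Suppose instead that $C$ is not $3$-connected. A circuit is $2$-connected (a cut vertex would split the rigidity matroid as a direct sum, which has no spanning circuit), so $C$ admits a $2$-vertex cut, and the standard structural result for $\mathcal{R}_2$-circuits (consistent with the ``moreover'' clause of \cref{lemma:Mconnected2sum}) decomposes it as a $2$-sum $C = C_1 \oplus_2 C_2$ of two $\mathcal{R}_2$-circuits, each with fewer vertices than $C$. By \cref{lemma:2sum} we have $\sharedstressrank{2}{C} = \sharedstressrank{2}{C_1} + \sharedstressrank{2}{C_2}$, and since each $C_i$ is dependent, $\sharedstressrank{2}{C_i} \geq 1$. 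Thus $\sharedstressrank{2}{C} \geq 2$; in particular a rank-$1$ circuit must be $3$-connected, which completes part (a). If $\sharedstressrank{2}{C} = 2$ in the non-$3$-connected case, then both summands have rank exactly $1$, so by part (a) each $C_i = K_4$ and hence $C = K_4 \oplus_2 K_4$, completing part (b). The final assertion then follows by contraposition, since $K_4$, $W_5$ and $K_4 \oplus_2 K_4$ have $4$, $5$ and $6$ vertices, respectively.

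The step I expect to be the main obstacle is the structural decomposition of a non-$3$-connected $\mathcal{R}_2$-circuit into a $2$-sum of two smaller circuits along its $2$-cut: the additivity in \cref{lemma:2sum} only becomes useful once this decomposition is in hand. I would either cite the Berg--Jord\'an circuit decomposition theorem or derive it directly from the characterisation of $\mathcal{R}_2$-circuits via the Laman count. A secondary point to verify carefully is the base-case classification of the $4$- and $5$-vertex circuits, which I would settle using the exact edge count $2n-2$ together with the minimum-degree-$3$ property.
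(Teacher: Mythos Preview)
Your proof is correct and follows essentially the same route as the paper. The paper phrases the dichotomy as ``$C$ is globally $2$-rigid or $C$ is a $2$-sum of two $\cR_2$-circuits'' (justified via \cref{theorem:jacksonjordan} and \cref{lemma:Mconnected2sum}), which for an $\cR_2$-circuit is equivalent to your $3$-connected/not-$3$-connected split; from there both arguments apply \cref{theorem:ght} in the first branch and \cref{lemma:2sum} in the second, and the identification of the small circuits and the ``main obstacle'' you flag are handled in the paper exactly as you anticipate.
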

\begin{proof}
      It follows from \cref{theorem:jacksonjordan} and \cref{lemma:Mconnected2sum} that $C$ is either globally $2$-rigid, or arises as the $2$-sum of two $\mathcal{R}_2$-circuits $C_1$ and $C_2$. If $C$ is globally $2$-rigid, then we have $\sharedstressrank{2}{C} = |V| - 3$ by \cref{theorem:ght}. Hence in this case $\sharedstressrank{2}{C} = 1$ if and only if $|V| = 4$, which means that $C = K_4$, and similarly, $\sharedstressrank{2}{C} = 2$ if and only if $|V| = 5$, which means that $C = W_5$. If $C$ is the $2$-sum of two $\mathcal{R}_2$-circuits $C_1$ and $C_2$, then by \cref{lemma:2sum}, we have $\sharedstressrank{2}{C} \geq 2$ with equality if and only if $\sharedstressrank{2}{C_1} = \sharedstressrank{2}{C_2} = 1$, which means that $C_1 = C_2 = K_4$.
\end{proof}

As a key step in our proof of \cref{theorem:2dsharpbound}, we now characterise the $2$-stress-independent graphs that satisfy $|E| = 3|V| - 7$. Recall that $W_5$ denotes the wheel on five vertices, and let $K_4^+$ be the graph obtained from $K_4$ by adding a vertex of degree two. 

\begin{proposition}\label{lemma:3n-7case}
      Let $G = (V,E)$ be a $2$-stress-independent graph on at least four vertices. If $|E| = 3|V| - 7$, then $G \in \{K_4-e, K_4^+, W_5\}$.
\end{proposition}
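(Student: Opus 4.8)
The plan is to prove the statement by induction on $|V|$, using the sparsity bound of \cref{theorem:mainstrongerstress}(b) to control low-degree vertices, the stability of $2$-stress-independence under taking subgraphs (\cref{lemma:stressindependentoperations}), and the $\mathcal{R}_2$-circuit/shared-stress-rank calculus (\cref{lemma:2sum,lemma:smallcircuits,theorem:largecircuit}) to handle the dense case. First I would dispose of the base cases $|V|\in\{4,5\}$ directly: a simple graph on four vertices with five edges is $K_4-e$, and a simple graph on five vertices with $8=\binom{5}{2}-2$ edges is $K_5$ minus two edges, which up to isomorphism are either disjoint or adjacent, giving $W_5$ and $K_4^+$ respectively. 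All three graphs $K_4-e$, $K_4^+$, $W_5$ are readily checked to be $2$-stress-independent, so these cases match the claimed conclusion.

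For the inductive step I assume $|V|\ge 6$ and first show that $G$ has minimum degree at least $3$. Since every subgraph of $G$ is $2$-stress-independent by \cref{lemma:stressindependentoperations}, deleting an isolated or degree-$1$ vertex would leave a $2$-stress-independent graph on $|V|-1\ge 5$ vertices with more than $3(|V|-1)-6$ edges, contradicting \cref{theorem:mainstrongerstress}(b); deleting a degree-$2$ vertex $v$ would leave exactly $3(|V|-1)-6$ edges, which by the equality case of \cref{theorem:mainstrongerstress}(b) forces $G-v=K_4$ and hence $|V|=5$, a contradiction. With minimum degree $\ge 3$ in hand, if $G$ has a vertex $v$ of degree exactly $3$, then $G-v$ is $2$-stress-independent with $|E|-3=3(|V|-1)-7$ edges, so by induction $G-v\in\{K_4-e,K_4^+,W_5\}$. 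As these have at most five vertices, this forces $|V|=6$ and $G-v\in\{K_4^+,W_5\}$, a case I would then eliminate by a finite check: for each way of re-attaching a degree-$3$ vertex (subject to $G$ having minimum degree $\ge 3$), one verifies using \cref{theorem:2dstresslinked} that the attachment produces an adjacent pair that is $2$-stress-linked across some edge—typically because $v$ completes a second $\mathcal{R}_2$-circuit sharing an edge with an existing one—contradicting stress-independence.

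The remaining and main obstacle is the dense case in which $G$ has no vertex of degree $3$, so that the minimum degree is at least $4$ and necessarily $|V|\ge 7$, and the degree reduction is unavailable. Here I would argue through the $\mathcal{R}_2$-circuit structure. A rank count over the $\mathcal{R}_2$-components (each is $\mathcal{R}_2$-connected, hence $2$-rigid, contributing $2|V_i|-3$ to $r_2(G)$, so that the stress dimension $|E|-r_2(G)$ equals the number of $K_4$-components) shows that not every $\mathcal{R}_2$-circuit of $G$ can be a copy of $K_4$: the all-$K_4$ configuration, combined with $|E|=3|V|-7$ and $2|V|-4\le r_2(G)\le 2|V|-3$, already forces $|V|\le 5$. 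Hence, by \cref{lemma:fivevertices}, $G$ contains an $\mathcal{R}_2$-circuit $C$ on at least five vertices, and so $s_2(C)\ge 2$ by \cref{lemma:smallcircuits}.

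The heart of the matter is to show that a $2$-stress-independent graph cannot simultaneously carry such a circuit and have as many as $3|V|-7$ edges once $|V|\ge 7$. Concretely, I expect to establish a bound of the form $|E|\le 3|V|-5-s_2(C)$ for any $2$-stress-independent $G$ containing an $\mathcal{R}_2$-circuit $C$ (tight for $C=G\in\{K_4,W_5\}$), so that $|E|=3|V|-7$ would force $s_2(C)\le 2$ for every circuit, and then the classification in \cref{lemma:smallcircuits} together with the large-circuit structure of \cref{theorem:largecircuit} would pin the circuits, and ultimately $G$, to the small exceptional configurations—none of which survive at $|V|\ge 7$. The mechanism behind such a bound is that surplus shared-stress-rank forces an edge $e$ to lie in two distinct $\mathcal{R}_2$-circuits whose union minus $e$ is $\mathcal{R}_2$-connected with three internally disjoint paths between the ends of $e$; by \cref{theorem:2dstresslinked} that pair is then $2$-stress-linked in $G-e$, contradicting stress-independence. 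Making this final step rigorous—using the shared-stress-rank additivity of \cref{lemma:2sum} and the classification of \cref{lemma:smallcircuits} rather than pure edge counting—is the step I expect to be genuinely hard.
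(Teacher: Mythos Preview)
Your inductive reduction via low-degree vertices is a legitimate alternative to the paper's direct argument, and it cleanly disposes of the cases $|V|\le 5$, minimum degree $\le 2$, and (modulo a finite check you did not carry out) the presence of a degree-$3$ vertex when $|V|=6$. The paper does none of this; it treats all $|V|$ at once via a single inequality. So up to this point you are on a different but workable route.

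The genuine gap is in your dense case. You correctly guess the target bound $|E|\le 3|V|-5-\sharedstressrank{2}{C}$, but the mechanism you sketch---find an edge lying in two $\cR_2$-circuits whose union minus that edge is $\cR_2$-connected with three internally disjoint paths, then invoke \cref{theorem:2dstresslinked}---is not how this bound is obtained, and it is not clear how to make it work: from a mere surplus of edges there is no evident way to locate such a pair of circuits with the required connectivity. The paper's argument is instead a \emph{chain inequality on $\sharedstressrank{2}{}$}. Fix any $\cR_2$-circuit $C$, delete one of its edges, and extend to a maximal $\cR_2$-independent spanning subgraph $G_0$; then add the remaining edges $e_1,\dots,e_k$ one at a time. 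Each $e_i$ has endpoints that are $2$-linked but (by $2$-stress-independence and \cref{lemma:stresslinkedsubgraph}) not $2$-stress-linked in $G_{i-1}$, and this forces $\sharedstressrank{2}{G_{i-1}}<\sharedstressrank{2}{G_i}$. Telescoping gives $k\le \sharedstressrank{2}{G}-\sharedstressrank{2}{C}+1$, hence $|E|\le (2|V|-3)+\sharedstressrank{2}{G}-\sharedstressrank{2}{C}+1$. This is the missing idea.

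There is a second point your outline misses: the paper then splits on whether $G$ is globally $2$-rigid. If it is, $\sharedstressrank{2}{G}=|V|-3$ and your bound $|E|\le 3|V|-5-\sharedstressrank{2}{C}$ follows; combined with $|E|=3|V|-7$ this forces $\sharedstressrank{2}{C}\le 2$ for every circuit, and then \cref{theorem:largecircuit} and \cref{lemma:smallcircuits} (plus \cref{lemma:sixvertices} for $|V|=6$) kill all $|V|\ge 6$. If $G$ is \emph{not} globally $2$-rigid, then $\sharedstressrank{2}{G}\le |V|-4$ and the chain gives the sharper $|E|\le 3|V|-6-\sharedstressrank{2}{C}$, whence $\sharedstressrank{2}{C}\le 1$ and every circuit is a $K_4$; an edge/rank count then forces $|V|\le 5$. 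Your single bound $|E|\le 3|V|-5-\sharedstressrank{2}{C}$ would only yield $\sharedstressrank{2}{C}\le 2$ in the non-globally-rigid case, leaving $W_5$ and $K_4\oplus_2 K_4$ circuits unexcluded, so the case split is not optional. (Incidentally, your claimed lower bound $r_2(G)\ge 2|V|-4$ in the all-$K_4$ argument is unjustified; the correct route is $q\ge |E|-(2|V|-3)$ from independence after deleting one edge per $K_4$, together with $6q\le |E|$.)
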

\begin{proof}
      We proceed as in the proof of \cref{theorem:mainstrongerstress}(b) in~\cite{stresslinked}. 
      Let $C$ be an arbitrary $\mathcal{R}_2$-circuit  in $G$, $e_1$ be an edge of $C$, and $G_0$ be obtained by extending $C-e_1$ to a maximal $\mathcal{R}_2$-independent subgraph $G_0=(V_0,E_0)$ of $G$. Let $E - E_0 = \{e_1,\ldots,e_k\}$, and let $G_i = G_0 + \{e_1,\ldots,e_i\}$ for each $i \in \{1,\ldots,k\}$. The maximal choice of $G_0$ and \cref{lemma:stresslinkedsubgraph} imply that the pair of end vertices $\{u_i,v_i\}$ of $e_i$ is $\mathcal{R}_2$-linked but not $2$-stress-linked in $G_{i-1}$, for each $i \in \{1,\ldots,k\}$. 
      Note that since $G_1$ can be obtained from $C$ by adding isolated vertices and $\mathcal{R}_d$-bridges, $\sharedstressrank{2}{C} = \sharedstressrank{2}{G_1}$ holds. 
      Hence we have
$
      \sharedstressrank{2}{C} =\sharedstressrank{2}{G_1} < \cdots <  \sharedstressrank{2}{G_{k-1}} <  \sharedstressrank{2}{G_k} =  \sharedstressrank{2}{G}$ so $k-1 \leq \sharedstressrank{2}{G}-\sharedstressrank{2}{C}$. This gives
      \begin{equation}\label{eq:1}
            3|V|-7=|E|=|E_0|+k \leq 2|V| -3  + \sharedstressrank{2}{G} - \sharedstressrank{2}{C} + 1
      \end{equation}
      Consider the following two cases.
      
      \vspace{.5em}
      \noindent \textbf{\sffamily\boldmath Case 1: $G$ is globally $2$-rigid.} Then $\sharedstressrank{2}{G}=|V|-3$ and \cref{eq:1} gives $\sharedstressrank{2}{C}\leq 2$ for all  $\cR_2$-circuits $C$ in $G$. In addition, $G$ is $\mathcal{R}_2$-connected by
      \cref{theorem:mconnected} and $G$ is minimally globally 2-rigid by \cref{lem:new}.  
      If $|V| \geq 7$, \cref{theorem:largecircuit} and the hypothesis that 
$|E|=3|V|-7$ tell us that $G$ contains an $\cR_2$-circuit $C$ with at least seven vertices. \cref{lemma:smallcircuits} now gives  $\sharedstressrank{2}{C}\geq 3$, contradicting the fact that 
$\sharedstressrank{2}{C}\leq 2$. 
      If $|V| = 6$, then
      \cref{lemma:sixvertices} and the fact that $G$ is minimally globally 2-rigid imply  that $G$ is an $\cR_2$-circuit.  This contradicts the  hypothesis that $|E|=3|V|-7=11$ and the assumption that $|V|=6$ and hence $r_2(G) = 9$. 
      
      Hence we may assume that $|V|\leq 5$. Since $G$ is globally rigid and $|E|=3|V|-7$, we have $|V|=5$ and $G$ is an $\cR_2$-circuit. This implies that $G=W_5$.
      
      \vspace{.5em}
      \noindent \textbf{\sffamily \boldmath Case 2: $G$ is not globally $2$-rigid.}
      Then $\sharedstressrank{2}{G} \leq |V|-4$, and thus \eqref{eq:1} gives %
$\sharedstressrank{2}{C} \leq 1$. \cref{lemma:smallcircuits} now implies  that  every $\mathcal{R}_2$-circuit in $G$ is a copy of $K_4$.  By \cref{lemma:fivevertices}, this implies that every nontrivial $\mathcal{R}_2$-component of $G$ is a copy of $K_4$.
      
      Let $q$ be the number of nontrivial $\mathcal{R}_2$-components of $G$. Since the $\mathcal{R}_2$-components are edge-disjoint and each is a copy of $K_4$, we have \[6q \leq |E| = 3|V| - 7 \leq 2|V| - 3 + q,\]
      where the last inequality follows from the observation that by deleting one edge from each copy of $K_4$, we obtain an $\mathcal{R}_2$-independent graph. This yields
\[3|V| - 7 \leq 2|V| - 3 + \frac{3|V|-7}{6},\] which after rearranging gives $|V| \leq 8-7/3$, and hence $|V| \leq 5$. It is easy to check by hand that among the non-globally $2$-rigid graphs on four or five vertices, only $K_4-e$ and $K_4^+$ satisfy $|E| = 3|V| - 7$, and that these graphs are indeed $2$-stress-independent.
\end{proof}

We need a final technical lemma before proving \cref{theorem:2dsharpbound}.

\begin{lemma}\label{lemma:separatingpair}
      Let $G = (V,E)$ be an $\mathcal{R}_2$-connected $2$-stress-independent graph. Then
      \begin{enumerate}
            \item every separating pair of vertices is nonadjacent in $G$, and
            \item for any $2$-separation $(G_1,G_2)$ of $G$, $G_1$ is not isomorphic to either $K_4^+$ or $W_5$.
      \end{enumerate}
\end{lemma}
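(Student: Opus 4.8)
The plan is to prove both parts by contradiction, in each case exhibiting an edge $e=uv\in E(G)$ together with an $\mathcal{R}_2$-connected subgraph $H\subseteq G-uv$ containing $u,v$ with $\kappa(u,v;H)\geq 3$; by \cref{theorem:2dstresslinked} this makes $\{u,v\}$ $2$-stress-linked in $G-uv$, contradicting $2$-stress-independence. I would use throughout that an $\mathcal{R}_2$-connected graph is redundantly $2$-rigid (every edge lies in a circuit, and its single maximal rigid subgraph must be all of $G$), so that deleting any one edge leaves a rigid, hence $2$-connected, graph. I would also use \cref{lemma:Mconnected2sum} to pass between $2$-separations of $G$ and $\mathcal{R}_2$-connectivity of the two sides.

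For part (a), suppose $\{x,y\}$ is a separating pair with $xy\in E$. The associated $2$-separation exhibits $G$ as the parallel connection of two graphs $H_1,H_2$ along $xy$, where $V(H_1)\cap V(H_2)=\{x,y\}$, each $H_i$ contains the edge $xy$, and (since $\mathcal{R}_2$-connected graphs have minimum degree at least three) each $H_i$ has at least two further vertices. By \cref{lemma:Mconnected2sum} the $\mathcal{R}_2$-connectivity of $G$ forces both $H_i$ to be $\mathcal{R}_2$-connected, and the same lemma gives that $G-xy=H_1\oplus_2 H_2$ is again $\mathcal{R}_2$-connected. Since each $H_i$ is redundantly rigid, $H_i-xy$ is rigid and hence $2$-connected, so $\kappa(x,y;H_i-xy)\geq 2$; as the two sides meet only in $\{x,y\}$, the internally disjoint $x,y$-paths on the two sides combine to give $\kappa(x,y;G-xy)\geq 4$. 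Taking $H=G-xy$ in \cref{theorem:2dstresslinked} then shows $\{x,y\}$ is $2$-stress-linked in $G-xy$, the desired contradiction.

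For part (b), let $(G_1,G_2)$ be a $2$-separation with $\{x,y\}=V(G_1)\cap V(G_2)$. By part (a) the pair $\{x,y\}$ is nonadjacent, so $xy\notin E$ and $G=(G_1+xy)\oplus_2(G_2+xy)$; as above, \cref{lemma:Mconnected2sum} gives that $G_1+xy$ and $G_2+xy$ are both $\mathcal{R}_2$-connected, and in particular $G_2$ is connected and contains an $x,y$-path. Now suppose $G_1\cong W_5$ or $G_1\cong K_4^+$. In the $K_4^+$ case the degree-two vertex $z$ must lie in $\{x,y\}$ (otherwise it would keep degree two in $G_1+xy$, contradicting $\mathcal{R}_2$-connectivity), and nonadjacency forces the other element of $\{x,y\}$ to be a non-neighbour of $z$; in the $W_5$ case nonadjacency forces $\{x,y\}$ to be a diagonal pair of the rim. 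In both cases $G_1+xy$ is a globally $2$-rigid graph on five vertices with nine edges (namely $W_5$ plus a rim diagonal, respectively $K_5$ minus an edge), and one checks directly that there is an edge $uv\in E(G_1)$ for which $(G_1+xy)-uv\cong W_5$, hence $\mathcal{R}_2$-connected: one may take $uv$ to be an edge joining two private vertices in the $K_4^+$ case, and a spoke at $x$ in the $W_5$ case. By \cref{lemma:Mconnected2sum}, $G-uv=\big((G_1+xy)-uv\big)\oplus_2(G_2+xy)$ is then $\mathcal{R}_2$-connected.

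It remains to verify $\kappa(u,v;G-uv)\geq 3$, and this routing step is the main obstacle. In the $K_4^+$ case all three internally disjoint $u,v$-paths already lie inside $G_1-uv$; in the $W_5$ case two disjoint paths lie inside $G_1-uv$ and the third is obtained by going from $u$ to $x$, through an $x,y$-path of $G_2$, and then from $y$ to $v$, the chosen edge $uv$ being positioned relative to $\{x,y\}$ precisely so that this third path is disjoint from the first two. With $\kappa(u,v;G-uv)\geq 3$ and $G-uv$ $\mathcal{R}_2$-connected in hand, \cref{theorem:2dstresslinked} shows $\{u,v\}$ is $2$-stress-linked in $G-uv$, contradicting $2$-stress-independence. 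The two delicate points are the reduction via part (a) to a nonadjacent pair and the disjoint routing through $G_2$; what keeps the case analysis finite is that $W_5$ is the only five-vertex $\mathcal{R}_2$-circuit (\cref{lemma:smallcircuits}), together with the characterisation of globally $2$-rigid graphs in \cref{theorem:jacksonjordan}.
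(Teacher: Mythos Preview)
Your proof is correct and follows essentially the same approach as the paper: in each part you produce an edge $uv$ whose deletion leaves an $\mathcal{R}_2$-connected graph with $\kappa(u,v;\cdot)\geq 3$, and then invoke \cref{theorem:2dstresslinked}. The only cosmetic difference is the edge choice in the $K_4^+$ case (you delete an edge between two private vertices and find all three paths inside $G_1$, whereas the paper deletes an edge between a separating vertex and a private common neighbour and routes the third path through $G_2$); your closing references to \cref{lemma:smallcircuits} and \cref{theorem:jacksonjordan} are not actually used and can be dropped.
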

\begin{proof}
      \emph{(a)} Suppose, for a contradiction, that $u,v$ is a separating pair of vertices of $G$ and $uv \in E$. We can deduce from \cref{lemma:Mconnected2sum} that $G-uv$ is also $\mathcal{R}_2$-connected and satisfies $\kappa(u,v;G-uv) \geq 3$. Hence by \cref{theorem:2dstresslinked}, $\{u,v\}$ is $2$-stress-linked in $G-uv$, contradicting the assumption that $G$ is $2$-stress-independent.
      
      \emph{(b)} Let $V(G_1) \cap V(G_2) = \{u,v\}$. By part \emph{(a)}, $u$ and $v$ are nonadjacent in $G$. Let us first suppose for a contradiction that $G_1$ is isomorphic to $K_4^+$. Without loss of generality we may assume that $u$ is the vertex of degree two in $G_1$. Let $w$ be a common neighbour of $u$ and $v$ in $G_1$. It is easy to verify that $G_1-vw+uv$ is isomorphic to $W_5$ and hence is $\mathcal{R}_2$-connected. It follows from \cref{lemma:Mconnected2sum} that $G-vw$ is also $\mathcal{R}_2$-connected, since it can be written as the 2-sum of $G_2+uv$ and $G_1-vw+uv$. Since $\kappa(v,w;G-vw) \geq 3$, we deduce from \cref{theorem:2dstresslinked} that $\{v,w\}$ is $2$-stress-linked in $G-vw$, contradicting the fact that $G$ is $2$-stress-independent.
      
      The case when $G_1$ is isomorphic to $W_5$ is similar. Since $u$ and $v$ are nonadjacent in $G_1$, they necessarily lie on the ``rim'' of $W_5$. Let $w$ be the vertex of degree four in $G_1$. It is again easy to verify that $G_1-vw+uv$ is isomorphic to $W_5$, and we can reach a contradiction by repeating the argument in the previous paragraph.
\end{proof}

\paragraph{Proof of \texorpdfstring{\cref{theorem:2dsharpbound}.}{Theorem 4.6}} \hspace{-1em}
      We consider two cases.
      
      \vspace{.5em}
      \noindent \textbf{\sffamily\boldmath Case 1: $G$ is not $\mathcal{R}_2$-connected.}  In this case we can use the proof idea of~\cite[Theorem 1.4]{Jear}. By adding $\mathcal{R}_2$-bridges to $G$, we may assume that it is $2$-rigid. (Recall that by \cref{lemma:stressindependentoperations}, adding $\mathcal{R}_2$-bridges preserves the property of being $2$-stress-linked.) Let $E_0$ denote the set of bridges of $\mathcal{R}_2(G)$, and let $G_i = (V_i,E_i), i \in \{1,\ldots,q\}$ be the nontrivial $\mathcal{R}_2$-components of $G$. Note that each $G_i$ has at least four vertices and is $2$-stress-independent. Now we have
      \begin{equation}\label{eq:ranksum}
            |E_0| + \sum_{i=1}^q r_2(G_i) = r_2(G) = 2|V| - 3,
      \end{equation}
      and, by using \cref{theorem:mainstrongerstress}(b) and the fact that $\mathcal{R}_2$-connected graphs are $2$-rigid,
      \begin{align}\label{eq:components}
            |E| = \sum_{i=0}^q |E_i| &\leq |E_0| + \sum_{i=1}^q \left(3|V_i| - 6\right) \nonumber \\ &= |E_0| + \sum_{i=1}^q \left(\frac{3}{2}r_2(G_i) - \frac{3}{2} \right) \nonumber \\ &= \frac{3}{2}\left(|E_0| + \sum_{i=1}^q r_2(G_i) \right) - \frac{|E_0| + 3q}{2} \nonumber \\
            &= 3|V| - \frac{9 + |E_0| + 3q}{2}  
      \end{align}
      If $q \geq 3$, then $9 + 3q \geq 18$ and we are done. If $q=0$ then $|E|\leq 2|V|-3<3|V|-9$ since $|V|\geq 8$. Hence we may assume that $1\leq q \leq 2$.
      
      We first consider the subcase when $q=1$. Let $G_1 = (V_1,E_1)$. Since $G$ is not $\mathcal{R}_2$-connected, we must have $V_1 \subsetneq V$, and by \cref{eq:ranksum}, we have $|E_0| = 2(|V| - |V_1|)$. 
      Now if $|V_1| \leq 5$, then \cref{theorem:mainstrongerstress}(b) gives
\[|E| = |E_0| + |E_1| \leq 2(|V| - |V_1|) + 3|V_1| - 6 = 3|V| - 6 - (|V| - |V_1|) \leq 3|V| - 9\] 
      since $|V|\geq 8$. Similarly, if $|V_1| \geq 6$, then \cref{theorem:mainstrongerstress}(b) and \cref{lemma:3n-7case} gives $|E_1| \leq 3|V_1| - 8$ and thus  \[|E| = |E_0| + |E_1| \leq 2(|V| - |V_1|) + 3|V_1| - 8 = 3|V| - 8 - (|V| - |V_1|) \leq 3|V| - 9.\]
      
      It remains to consider the subcase when  $q = 2$.  Since $\cR_2$-components  can have at most one vertex in common, $G_1 \cup G_2$ is not $2$-rigid. Thus $|E_0| \geq 1$, and the expression on the right in \cref{eq:components} is at most $3|V| - 8$. Moreover, from our assumption that $|V| \geq 8$ we can deduce that one of the following holds: 
      \begin{itemize}
            \item at least one of $G_1$ or $G_2$ is not a copy of $K_4$, or
            \item $G_1 \cong G_2 \cong K_4$ and $G_1 \cap G_2$ is empty, or
            \item $G_1 \cong G_2 \cong K_4$ and $G_1 \cup G_2$ does not span all of $V$.
      \end{itemize}
      If the  first alternative holds, then the inequality in \cref{eq:components} is strict by \cref{theorem:mainstrongerstress}(b), and thus $|E| \leq 3|V| - 9$. If the second or third holds then  $|E_0| \geq 3$, and we can once again deduce from \cref{eq:components} that $|E| \leq 3|V| - 9$.
      
      \vspace{.5em}
      \noindent \textbf{\sffamily\boldmath Case 2: $G$ is $\mathcal{R}_2$-connected.} Let us first consider the subcase when $G$ is not $3$-connected. Let $\{u,v\}$ be a separating vertex pair of $G$, and let $(G_1,G_2)$ be a $2$-separation of $G$ with $V(G_1) \cap V(G_2) = \{u,v\}$. It follows from \cref{lemma:stresslinkedsubgraph,lemma:separatingpair}(a), respectively, that $G_i$ is 2-stress-independent and $u$ and $v$ are nonadjacent in $G$. By \cref{lemma:Mconnected2sum}, $G_i + uv$ is $\mathcal{R}_2$-connected, and hence $|V(G_i)| \geq 4$, for $i \in \{1,2\}$. 
      
      Let $n_i = |V(G_i)|$ and $m_i = |E(G_i)|$ for $i \in \{1,2\}$. Since $G_i$ is a non-complete $2$-stress-independent graph on at least four vertices, \cref{theorem:mainstrongerstress}(b) implies that $m_i \leq 3n_i - 7$.  Moreover, we have $|V| = n_1 + n_2 - 2$, and hence
\[|E| = m_1+m_2 \leq 3(n_1+n_2) - 14 = 3|V| - 8.\] If $m_i < 3n_i - 7$ holds for some $i \in \{1,2\}$, then the inequality above is strict, and we are done.
      
      Let us assume, to the contrary, that $m_1 = 3n_1-7$ and $m_2 = 3n_2-7$. It follows from \cref{lemma:3n-7case} that $G_1$ and $G_2$ are both isomorphic to one of $K_4-e,K_4^+$ or $W_5$. Moreover, since $|V| \geq 7$, at least one of them, say $G_1$, is not isomorphic to $K_4-e$. \cref{lemma:separatingpair}(b) now implies that $G_1$ cannot be isomorphic to $K_4^+$ or $W_5$ either, a contradiction.
      
      Finally, let us consider the case when $G$ is $\mathcal{R}_2$-connected and $3$-connected. If $G$ is minimally $\mathcal{R}_2$-connected, then we are done by \cref{theorem:R2connectedsharp}. Hence we may assume that there exists an edge $e \in E$ such that $G-e$ is $\mathcal{R}_2$-connected. Since $G$ is $2$-stress-independent, \cref{theorem:2dstresslinked} implies that there exists a vertex pair $\{u,v\}$ in $G-e$ that separates the endvertices of $e$. By \cref{lemma:separatingpair}(a), $u$ and $v$ are nonadjacent in $G-e$.
      
      Let $(G_1,G_2)$ be a $2$-separation of $G-e$ with $V(G_1) \cap V(G_2) = \{u,v\}$. Let $n_i = |V(G_i)|$ and $m_i = |E(G_i)|$ for $i \in \{1,2\}$. As in the previous case, $G_i$ is a non-complete $2$-stress-independent graph on at least four vertices, and hence \cref{theorem:mainstrongerstress}(b) implies that $m_i \leq 3n_i - 7$. Moreover, we have $|V| = n_1 + n_2 - 2$ and
\[|E| = m_1+m_2 + 1 \leq 3(n_1+n_2) - 14 = 3|V| - 7.\] Thus we are done if $m_i < 3n_i - 7$ holds for both $i \in \{1,2\}$.
      
      Suppose, for a contradiction, that $m_i = 3n_i-7$ for some $i \in \{1,2\}$, say $i=1$. By \cref{lemma:separatingpair}(b), $G_1$ cannot be isomorphic to either $K_4^+$ or $W_5$, so by \cref{lemma:3n-7case}, $G_1$ must be isomorphic to $K_4 - uv$. Let $w$ be the endvertex of $e$ lying in $G_1$. It is easy to see that $G-uw$ can be obtained from $G_2+uv$ by a pair of $2$-dimensional edge split operations. By \cref{lemma:Mconnected2sum}, $G_2+uv$ is $\mathcal{R}_2$-connected, and hence by \cref{lemma:edgesplit}, so is $G-uw$. The $\mathcal{R}_2$-connectivity of $G_2+uv$ also implies that $G_2$ is $2$-rigid and hence $2$-connected, from which we can deduce that $\kappa(u,w;G-uw) \geq 3$. \cref{theorem:2dstresslinked} now implies that $\{u,w\}$ is $2$-stress-linked in $G-uw$, contradicting the hypothesis that $G$ is $2$-stress-independent. This final contradiction completes the proof.
\qed

\section{Minimally \texorpdfstring{$\cR_d$}{Rd}-connected graphs}\label{section:Mconnected}

As we noted in the previous section, \cref{theorem:2dstresslinked} implies that every minimally $\mathcal{R}_2$-connected graph is $2$-stress-independent. We conjecture that this extends to higher dimensions.

\begin{conjecture}\label{con:new} 
      Every minimally $\mathcal{R}_d$-connected graph is $d$-stress-independent.
\end{conjecture}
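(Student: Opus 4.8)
The plan is to argue by contradiction, exploiting the tension between $d$-stress-linkedness and the minimality of $\mathcal{R}_d$-connectivity. Suppose $G=(V,E)$ is minimally $\mathcal{R}_d$-connected but not $d$-stress-independent, so that $\{u,v\}$ is $d$-stress-linked in $G-uv$ for some edge $uv\in E$. First I would record the easy observation that, since $G$ is $\mathcal{R}_d$-connected, the edge $uv$ lies in an $\mathcal{R}_d$-circuit of $G$, so $r_d((G-uv)+uv)=r_d(G-uv)$ and hence $\{u,v\}$ is automatically $d$-linked in $G-uv$. Thus the whole content of the statement is about ruling out the \emph{stronger} stress-linked condition, and by minimality we also know that $G-uv$ is not $\mathcal{R}_d$-connected, which is the fact I aim to contradict.

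The heart of the argument is a clean matroidal endgame, provided one can promote the stress-linkedness into a \emph{connected witness}: an $\mathcal{R}_d$-connected subgraph $H\subseteq G-uv$ with $u,v\in V(H)$ in which $\{u,v\}$ is $d$-stress-linked, and in particular $d$-linked. Granting such an $H$, there is an $\mathcal{R}_d$-circuit $C$ with $uv\in C\subseteq E(H)+uv$, and since the edges of the $\mathcal{R}_d$-connected graph $H$ are pairwise contained in a common circuit of $H$ (hence of $G-uv$), the set $C-uv\subseteq E(H)$ lies inside a single $\mathcal{R}_d$-component $K$ of $G-uv$. I would then show every edge of $G-uv$ lies in $K$. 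Fix $e\in E-uv$; as $G$ is $\mathcal{R}_d$-connected, some $\mathcal{R}_d$-circuit $D$ of $G$ contains both $e$ and $uv$. If $e\in C$ then $e\in K$; otherwise $e\in D\setminus C$ and $uv\in C\cap D$, so the strong circuit elimination axiom gives a circuit $D'$ with $e\in D'\subseteq(C\cup D)-uv$. Because $D-uv$ is independent, $D'$ cannot be contained in $D-uv$, so $D'$ meets $C-uv\subseteq K$; as $uv\notin D'$, this circuit lies in $G-uv$ and places $e$ in the same component as $K$, i.e.\ $e\in K$. Since deleting $uv$ creates no isolated vertices ($u,v$ retain their incidences inside $H$), this forces $G-uv$ to be $\mathcal{R}_d$-connected, the desired contradiction with minimality.

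The main obstacle, and the reason the statement is posed as a conjecture rather than a theorem, is precisely the production of the $\mathcal{R}_d$-connected witness $H$. Stress-linkedness of $\{u,v\}$ in $G-uv$ only guarantees, a priori, a circuit through $uv$ whose remaining edges form an independent set that may be scattered across several $\mathcal{R}_d$-components; the stronger requirement is that the stress-linked pair be certified inside one $\mathcal{R}_d$-connected subgraph. For $d=2$ this is exactly what \cref{theorem:2dstresslinked} supplies (the witness $H$ having $\kappa(u,v;H)\geq 3$, which makes $\{u,v\}$ $2$-linked in $H$), so the argument above yields an unconditional proof in the plane. In general dimension, the existence of such a witness is the content of the structural conjecture from \cite{stresslinked} relating stress-linked pairs to $\mathcal{R}_d$-components, so that \cref{con:new} would follow from it. A natural line of attack on this gap would be to build $H$ directly from the shared stress structure: take the union of the supports of the stress matrices certifying the $d$-stress-linkedness of $\{u,v\}$ in $G-uv$, and show that the spanned edge set can be trimmed to an $\mathcal{R}_d$-connected subgraph that still keeps $\{u,v\}$ $d$-linked, mirroring the route by which the two-dimensional characterisation is established.
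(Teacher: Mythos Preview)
This statement is posed as an open conjecture in the paper and is not proved unconditionally; what the paper does establish is \cref{prop:minimallyMconnectedstresslinked}, which derives \cref{con:new} from \cref{conjecture:stresslinkedMcomponent} via exactly the strong-circuit-elimination argument you describe (take the $\mathcal{R}_d$-component $H_0$ of $G-uv$ supplied by the conjecture, find a circuit $C$ through $uv$ inside $H_0+uv$, and eliminate $uv$ against any other circuit of $G$ to force all edges into the same component). Your proposal correctly identifies the missing ingredient as the existence of an $\mathcal{R}_d$-connected witness for the stress-linked pair, correctly notes that \cref{theorem:2dstresslinked} supplies it when $d=2$, and your conditional endgame matches the paper's.
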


In support of this conjecture, we show that parts \emph{(a)} and \emph{(b)} of \cref{theorem:mainstrongerstress} also hold for a family of graphs which includes  all minimally $\cR_d$-connected graphs. This verifies the first half of~\cite[Conjecture 4.2]{Jear}. Let us note that the second half of \cite[Conjecture 4.2]{Jear} is the analogue of \cref{conjecture:stressindependentsharp} for minimally $\mathcal{R}_d$-connected graphs.

\begin{theorem}\label{theorem:mainstronger}
      Let $G$ be a graph in which every nontrivial $\mathcal{R}_d$-component is minimally $\mathcal{R}_d$-connected. Then
      \begin{enumerate}
            \item $G$ is $\mathcal{R}_{d+1}$-independent;
            \item For all subgraphs $G'=(V',E')$  of $G$ with  $|V'|\geq d+2$, we have \[|E'|\leq r_d(G') + |V'| - d - 1 \leq (d+1)|V'|-\binom{d+2}{2},\] 
            where the first inequality holds with equality if and only if $G'=K_{d+2}$. 
      \end{enumerate}
\end{theorem}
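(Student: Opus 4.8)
The plan is to establish (a) by proving that every edge of $G$ is an $\mathcal{R}_{d+1}$-bridge, and then to derive (b) from (a) following the proof of \cref{theorem:mainstrongerstress}(b). The one substantial new ingredient is a structural property of minimally $\mathcal{R}_d$-connected graphs that I would isolate as a lemma: \emph{every edge of a minimally $\mathcal{R}_d$-connected graph lies in a cocircuit of size two of its $d$-dimensional rigidity matroid}. Throughout I will work with the $\mathcal{R}_d$-component partition $E(G) = E_0 \sqcup E_1 \sqcup \cdots \sqcup E_q$, where $E_0$ is the set of $\mathcal{R}_d$-bridges and $G_1, \dots, G_q$ are the nontrivial (hence, by hypothesis, minimally $\mathcal{R}_d$-connected) $\mathcal{R}_d$-components.

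For (a), I would fix an edge $e = uv$ and show that $\{u,v\}$ is \emph{not} $d$-stress-linked in $G - e$; the contrapositive of \cref{theorem:linkedtostresslinked} then shows that $\{u,v\}$ is not $(d+1)$-linked in $G-e$, i.e.\ that $e$ is an $\mathcal{R}_{d+1}$-bridge, and ranging over all $e$ gives that $G$ is $\mathcal{R}_{d+1}$-independent. If $e \in E_0$ then $\{u,v\}$ is not even $d$-linked in $G-e$, so certainly not $d$-stress-linked. If $e \in E_i$, then by the lemma $e$ lies in a size-two cocircuit of $\mathcal{R}_d(G_i)$; since the rank of $\mathcal{R}_d$ is additive over its components, this is also a size-two cocircuit of $\mathcal{R}_d(G) = \mathcal{R}_d((G-e)+uv)$, and the contrapositive of \cref{theorem:stresslinked}(c) applied to $G-e$ then rules out $\{u,v\}$ being $d$-stress-linked in $G-e$.

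For (b), part (a) gives that every subgraph $G'=(V',E')$ is $\mathcal{R}_{d+1}$-independent, so $|E'| = r_{d+1}(G')$. I would extend $G'$ to a $d$-rigid graph $G'+F$ by adding a set $F$ of $\mathcal{R}_d$-bridges; each added edge is then also an $\mathcal{R}_{d+1}$-bridge, since by \cref{lem:bridges}(a) an $\mathcal{R}_d$-bridge $f$ of a graph $H$ is an $\mathcal{R}_{d+1}$-bridge of the cone $H*x$, and a coloop of $\mathcal{R}_{d+1}(H*x)$ restricts to a coloop of the subgraph $\mathcal{R}_{d+1}(H)$. Hence $G'+F$ stays $\mathcal{R}_{d+1}$-independent, and the arithmetic in the proof of \cref{theorem:mainstrongerstress}(b) gives $|E'| \le r_d(G') + |V'| - d - 1 \le (d+1)|V'| - \binom{d+2}{2}$. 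If the first inequality is tight then $G'+F$ is minimally $(d+1)$-rigid, hence globally $d$-rigid by \cref{theorem:rigidgloballyrigid} and therefore $\mathcal{R}_d$-bridgeless by \cref{theorem:mconnected}; thus $F = \varnothing$ and $G'$ is globally $d$-rigid. By \cref{theorem:mconnected}, $G'$ is $\mathcal{R}_d$-connected, so $E(G') \subseteq E_i$ for a single minimally $\mathcal{R}_d$-connected component $G_i$. Invoking the structural theory a second time---now in the form that an $\mathcal{R}_d$-connected subgraph of a minimally $\mathcal{R}_d$-connected graph is itself minimally $\mathcal{R}_d$-connected---I would conclude that $G'-e$ is not $\mathcal{R}_d$-connected, hence (again by \cref{theorem:mconnected}) not globally $d$-rigid, for every $e \in E'$. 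Therefore $G'$ is minimally globally $d$-rigid, and \cref{theorem:minimallygloballyrigid} forces $G' = K_{d+2}$.

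The hard part is the structural input on minimally $\mathcal{R}_d$-connected graphs: both the size-two cocircuit property used in (a) and the subgraph-closure property used for the equality case. That the latter is genuinely needed is shown by $K_5 - e$ when $d=2$: it is globally $2$-rigid and minimally $3$-rigid with exactly $3|V|-6$ edges, yet it is not $K_4$, so the equality clause can hold only if such a graph never arises as an $\mathcal{R}_2$-connected subgraph of a minimally $\mathcal{R}_2$-connected graph. For $d=2$ I expect both properties to follow from the structure of $\mathcal{R}_2$-connected graphs developed in \cite{Jear} (via \cref{theorem:largecircuit} and the $2$-sum decomposition of \cref{lemma:Mconnected2sum}); for general $d$ I anticipate needing a purely matroidal argument showing that a minimally connected matroid has every element in a series pair and is closed under connected restrictions. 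Proving these facts in arbitrary dimension is where I expect the real difficulty to lie.
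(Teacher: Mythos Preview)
Your plan for part~(b) is essentially the paper's: add $\mathcal{R}_d$-bridges until $G'$ is $d$-rigid, use part~(a), and for the equality case invoke the structural fact that a connected restriction of a minimally connected matroid is again minimally connected. That second structural fact is indeed true and is exactly what the paper proves, via ear-decompositions (\cref{lemma:eardeco,minimalpartial}).

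The genuine gap is in part~(a). Your key lemma---\emph{every element of a minimally connected matroid lies in a series pair}---is false, already for $d=1$. Take two vertex-disjoint $4$-cycles, identify one vertex $v$ from each, and add a single edge $e$ joining the two antipodal vertices of the cycles (the ones at distance~$2$ from $v$). The resulting graph $G$ on seven vertices is minimally $2$-connected (deleting $e$ creates the cut vertex $v$, deleting any cycle edge creates a pendant vertex), yet $G-e$ consists of two $4$-cycles glued at $v$ and hence has no bridge. Thus $e$ lies in no $2$-edge-cut, i.e.\ in no size-two cocircuit of $\mathcal{R}_1(G)$. In particular the ``purely matroidal argument'' you anticipate cannot exist. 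Note also that your route would, en passant, prove that every minimally $\mathcal{R}_d$-connected graph is $d$-stress-independent, which is precisely the open \cref{con:new}.

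The paper avoids this by an inductive use of ear-decompositions rather than a one-shot series-pair argument. Given an ear-decomposition $C_1,\dots,C_t$ of a minimally $\mathcal{R}_d$-connected graph, \cref{minimalpartial} guarantees that the last lobe $\widetilde{C}_t$ has at least two elements and that $D_{t-1}$ is again minimally $\mathcal{R}_d$-connected. Every pair of edges in $\widetilde{C}_t$ is then a cocircuit of $\mathcal{R}_d(G)$, so \cref{lem:bridges}(b) makes each edge of $\widetilde{C}_t$ an $\mathcal{R}_{d+1}$-bridge; the edges of $D_{t-1}$ are handled by induction. This gives $\mathcal{R}_{d+1}$-independence without ever claiming that an \emph{arbitrary} edge sits in a series pair.
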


Our proof of \cref{theorem:mainstronger} is based on the concept of ear-decompositions from matroid theory, which we introduce next.
Let ${\cal M}$ be a matroid on ground set $E$, and let $C_1, C_2, \ldots, C_t$ be a sequence of circuits of ${\cal M}$. 
Let $D_0 = \varnothing$, and $D_j = \bigcup_{i = 1}^j C_i$ for $1 \le j \le t$. 
The set $C_i - D_{i - 1}$ is called the \emph{lobe} of the circuit $C_i$, and is denoted by $\widetilde{C}_i$, for each $1 \le i \le t$. 
Following~\cite{CH}, we say that $C_1, C_2, \ldots, C_t$ is a \emph{partial ear-decomposition} of ${\cal M}$ if the following properties hold for all $2\leq i\leq t$:
\begin{enumerate}[leftmargin=3em]
      \item [\textit{(E1)}] $C_i \cap D_{i - 1} \ne \varnothing$,
      \item [\textit{(E2)}]  $C_i - D_{i - 1} \ne \varnothing$, 
      \item [\textit{(E3)}] no circuit $C_i'$ satisfying \textit{(E1)} and \textit{(E2)} has $C_i' - D_{i - 1}$ properly contained in $C_i - D_{i - 1}$.
\end{enumerate}

\noindent
An \emph{ear-decomposition} of ${\cal M}$ is a partial ear-decomposition with
$D_t = E$. 
Parts \textit{(a)} and \textit{(b)} of the following lemma are from~\cite{CH}.
Part \textit{(c)} follows easily from the definitions.

\begin{lemma}\label{lemma:eardeco} 
      Let $\cal{M}$ be a matroid with
      rank function $r$. Then
      \begin{enumerate}
            \item $\cal M$ is connected if and only if $\cal M$  has an ear-decomposition.
            \item If $\cal M$ is connected, then any partial ear-decomposition of $\cal M$
            can be extended to an ear-decomposition of $\cal M$.
            \item If $C_1,C_2,\ldots,C_t$ is an ear-decomposition of $\cal M$, then
            \begin{equation*}
                  r(D_i)-r(D_{i-1})=|\widetilde C_i|-1\ \hbox{ for }\ \  1\leq i\leq t.
            \end{equation*}
      \end{enumerate}
\end{lemma}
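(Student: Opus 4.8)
The plan is to take parts \textit{(a)} and \textit{(b)} directly from~\cite{CH} and to prove the rank formula in part \textit{(c)} by analysing a single ear at a time. Fix an index $i$ and abbreviate $B = \widetilde{C}_i = C_i - D_{i-1}$ and $A = C_i \cap D_{i-1}$, so that $C_i = A \cup B$ is a disjoint union and, since $A \subseteq D_{i-1}$, we have $D_i = D_{i-1} \cup C_i = D_{i-1} \cup B$. The case $i = 1$ is immediate: here $D_0 = \varnothing$ and $D_1 = C_1$, so $r(D_1) - r(D_0) = r(C_1) = |C_1| - 1 = |\widetilde{C}_1| - 1$ because $C_1$ is a circuit. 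For $i \geq 2$, property \textit{(E2)} guarantees $B \neq \varnothing$, so I may fix an element $b \in B$, and the whole argument reduces to showing that $r(D_{i-1} \cup B) = r(D_{i-1}) + |B| - 1$.

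I would establish this in two steps. First, the \emph{closure step}: since $C_i$ is a circuit, $b \in \cl(C_i - b) = \cl(A \cup (B - b)) \subseteq \cl(D_{i-1} \cup (B - b))$, where the inclusion uses $A \subseteq D_{i-1}$. Consequently adding $b$ back is free, i.e.\ $r(D_{i-1} \cup B) = r(D_{i-1} \cup (B - b))$. Second, the \emph{independence step}: I claim that $B - b$ is independent over $D_{i-1}$, so that $r(D_{i-1} \cup (B - b)) = r(D_{i-1}) + |B| - 1$. Chaining these two equalities gives exactly $r(D_i) - r(D_{i-1}) = |B| - 1 = |\widetilde{C}_i| - 1$.

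The independence step is where the minimality condition \textit{(E3)} does the real work, and I expect it to be the main obstacle. Suppose $B - b$ were not independent over $D_{i-1}$; then some element of $B - b$ lies in the closure of $D_{i-1}$ together with the remaining elements of $B - b$, which produces a circuit $C'$ with $C' \cap (B - b) \neq \varnothing$ and $C' \subseteq D_{i-1} \cup (B - b)$. I would then check that $C'$ is precisely the kind of circuit that \textit{(E3)} forbids: its lobe $C' - D_{i-1} \subseteq B - b$ is nonempty (so \textit{(E2)} holds) and is properly contained in $\widetilde{C}_i = B$ (since $b \notin B - b$), while $C' \cap D_{i-1} \neq \varnothing$ (so \textit{(E1)} holds), because otherwise $C'$ would be a circuit contained in $B \subsetneq C_i$, hence a proper subset of the circuit $C_i$, which is impossible. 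This contradicts \textit{(E3)} and establishes the claim. The only points needing care are the disjointness bookkeeping among $A$, $B$ and $D_{i-1}$, and the verification that the offending circuit $C'$ genuinely meets $D_{i-1}$; both are routine once the closure identity and the basic circuit axioms are in place.
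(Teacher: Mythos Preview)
Your proposal is correct: you cite \cite{CH} for parts \textit{(a)} and \textit{(b)} exactly as the paper does, and your argument for part \textit{(c)}---using the circuit $C_i$ to show one element of the lobe is in the closure of the rest, and invoking \textit{(E3)} to rule out any further dependence---is the natural way to unpack the paper's claim that \textit{(c)} ``follows easily from the definitions.'' The paper gives no further details, so your write-up is simply a fleshed-out version of what the authors leave implicit.
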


The next lemma is from~\cite{Jear}. 

\begin{lemma}\label{minimalpartial}
      Let ${\cal M}$ be a minimally connected matroid on ground set $E$ with $|E|\geq 2$ and let
$C_1,C_2,\ldots,C_t$ be an ear-decomposition of 
${\cal M}$.
      Then
      \begin{enumerate}
            \item $|\widetilde C_i|\geq 2$ for all $1\leq i\leq t$,
            \item ${\cal M}|_{D_i}$ is minimally connected for all $1\leq i\leq t$.
      \end{enumerate}
\end{lemma}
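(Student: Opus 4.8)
The plan is to prove both parts together by a downward induction on $i$, peeling off the last ear, and to extract part \textit{(a)} as a by-product of part \textit{(b)}. The starting observation is that $\mathcal{M}|_{D_i}$ is connected for every $i$: since property \textit{(E3)} quantifies over all circuits of $\mathcal{M}$, the truncated sequence $C_1,\ldots,C_i$ still satisfies \textit{(E1)}--\textit{(E3)} in the restriction $\mathcal{M}|_{D_i}$ and has union $D_i$, so it is an ear-decomposition of $\mathcal{M}|_{D_i}$; \cref{lemma:eardeco}(a) then gives connectivity. The base case of the induction is $i=t$, where $\mathcal{M}|_{D_t}=\mathcal{M}$ is minimally connected by hypothesis.

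For the inductive step I would assume that $\mathcal{M}|_{D_i}$ is minimally connected (with $i\ge 2$) and prove that $\mathcal{M}|_{D_{i-1}}=\mathcal{M}|_{D_i}\setminus\widetilde C_i$ is minimally connected. Part \textit{(a)} then falls out at once: if the last lobe were a singleton $\widetilde C_i=\{e\}$, then $\mathcal{M}|_{D_i}\setminus e=\mathcal{M}|_{D_{i-1}}$ is connected, contradicting the minimal connectivity of $\mathcal{M}|_{D_i}$; hence $|\widetilde C_i|\ge 2$. (For $i=1$ one has $\widetilde C_1=C_1$, and $|C_1|\ge 2$ because a connected matroid on at least two elements is loopless.) It therefore remains to show, for every $f\in D_{i-1}$, that $\mathcal{M}|_{D_{i-1}}\setminus f$ is disconnected, knowing that $\mathcal{M}|_{D_i}\setminus f$ is disconnected.

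Write $L=\widetilde C_i$ and fix a separation of $\mathcal{M}|_{D_i}\setminus f$ into nonempty parts $A$ and $B$, so that no circuit of $\mathcal{M}|_{D_i}\setminus f$ meets both. Since every circuit contained in $D_{i-1}\setminus f$ is a circuit of $\mathcal{M}|_{D_i}\setminus f$, it cannot cross this partition; hence it suffices to argue that $L$ is contained in one part, say $A$, and that both $A\cap D_{i-1}$ and $B\cap D_{i-1}$ are nonempty, as this exhibits a separation of $\mathcal{M}|_{D_{i-1}}\setminus f$ into $A\cap D_{i-1}$ and $B\cap D_{i-1}$. When $f\notin C_i$ this is straightforward: $C_i$ is then a circuit of $\mathcal{M}|_{D_i}\setminus f$, so it lies in a single part, which I call $A$; as $L\subseteq C_i$ this forces $L\subseteq A$, the elements of $C_i\cap D_{i-1}\ne\varnothing$ witness $A\cap D_{i-1}\ne\varnothing$, and $L\subseteq A$ gives $B\subseteq D_{i-1}\setminus f$, so $B=B\cap D_{i-1}\ne\varnothing$.

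The main obstacle is the case $f\in C_i$, where deleting $f$ destroys the one circuit that visibly ties $L$ to $D_{i-1}$. Here I would argue by contradiction: if $\mathcal{M}|_{D_{i-1}}\setminus f$ were connected, I would reconstruct the connectivity of $\mathcal{M}|_{D_i}\setminus f$ by attaching each element $x\in L$ through a circuit of $\mathcal{M}|_{D_i}$ that passes through $x$, avoids $f$, and meets $D_{i-1}$, thereby contradicting the minimal connectivity of $\mathcal{M}|_{D_i}$. The delicate point --- and the crux of the whole lemma --- is the existence of such a circuit for each $x\in L$: it can fail only if $\{x,f\}$ is a cocircuit (a series pair) of $\mathcal{M}|_{D_i}$, so the argument must use the minimality of the ear-decomposition (property \textit{(E3)}) together with the bound $|L|\ge 2$ just established to rule out a lobe element being in series with an element of $D_{i-1}$. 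Once this is settled, the two nonemptiness conditions follow as in the easy case, completing the inductive step and hence both parts of the lemma.
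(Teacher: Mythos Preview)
The paper does not prove this lemma; it simply cites \cite{Jear}. So there is no in-paper argument to compare against, and the question is whether your sketch can be completed.

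Your overall strategy---downward induction on $i$, extracting \textit{(a)} at each step from the inductive hypothesis, and splitting the verification that $\mathcal{M}|_{D_{i-1}}\setminus f$ is disconnected into the cases $f\notin C_i$ and $f\in C_i$---is sound and does lead to a proof. The case $f\notin C_i$ is handled correctly.

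The gap is exactly where you flag it, but your proposed cure is wrong: neither \textit{(E3)} nor the bound $|\widetilde C_i|\ge 2$ is what resolves the obstruction. The missing ingredient is simply the connectivity of $\mathcal{M}|_{D_{i-1}}$. Suppose no circuit of $\mathcal{M}|_{D_i}$ through $x\in L$ avoids $f$. Since $\mathcal{M}|_{D_i}$ is connected, $x$ is not a coloop, and your observation that $\{x,f\}$ is then a cocircuit of $\mathcal{M}|_{D_i}$ is correct. But a cocircuit meets every circuit in $0$ or at least $2$ elements, so every circuit of $\mathcal{M}|_{D_i}$ through $f$ would have to contain $x$. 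Now use that $\mathcal{M}|_{D_{i-1}}$ is connected on $|D_{i-1}|\ge |C_1|\ge 2$ elements: $f$ lies in some circuit $C'\subseteq D_{i-1}$, and $C'$ cannot contain $x\in L=C_i\setminus D_{i-1}$. This is the contradiction. (Note that any circuit in $D_i\setminus f$ through $x$ automatically meets $D_{i-1}$, since a circuit cannot be a proper subset of $C_i$; so you do not need \textit{(E3)} for this either.)

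A minor slip: your closing line about ``the two nonemptiness conditions'' belongs to the separation argument of the easy case, not to the contradiction argument you are running here. Once you have, for each $x\in L$, a circuit in $D_i\setminus f$ joining $x$ to $D_{i-1}\setminus f$, transitivity of the connectivity relation together with the assumed connectivity of $\mathcal{M}|_{D_{i-1}}\setminus f$ gives that $\mathcal{M}|_{D_i}\setminus f$ is connected, contradicting the minimal connectivity of $\mathcal{M}|_{D_i}$.
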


\paragraph{Proof of \texorpdfstring{\cref{theorem:mainstronger}.}{Theorem 5.2}} \hspace{-1em}
      \textit{(a)} It follows from \cref{theorem:Mconnecteddimensiondropping} that every $\mathcal{R}_d$-component of $G$ is the union of some $\mathcal{R}_{d+1}$-components of $G$. Hence it suffices to show that each $\mathcal{R}_d$-component of $G$ is $\mathcal{R}_{d+1}$-independent. In the case of a trivial $\mathcal{R}_d$-component this is clear. Thus we may restrict our attention to a nontrivial $\mathcal{R}_d$-component, and hence assume that $G$ is minimally $\mathcal{R}_d$-connected. 
      
      By \cref{lemma:eardeco}, $\cR_d(G)$ has an ear-decomposition $C_1,C_2,\ldots, C_t$. We proceed by induction on $t$. If $t=1$, then $G$ is an $\cR_d$-circuit and every pair of edges of $G$ is a cocircuit of $\cR_d(G)$. \cref{lem:bridges}(b) now implies that every edge of $G$ is a bridge in $\mathcal{R}_{d+1}(G)$, and hence $G$ is $\mathcal{R}_{d+1}$-independent. Thus we may assume that $t\geq 2$.
      
      Put $G'=G[\bigcup_{i=1}^{t-1} C_i]$ and $J=E\sm\bigcup_{i=1}^{t-1} C_i$. Then $G'$ is $\mathcal{R}_{d+1}$-independent by \cref{minimalpartial}(b) and induction, and hence $r_{d+1}(E\sm J)=|E\sm J|$. In addition, $|J|\geq 2$ by \cref{minimalpartial}(a). The fact that $C_1,C_2,\ldots,C_t$ is an ear-decomposition of $\cR_d(G)$ and \cref{lemma:eardeco}(c) imply that every pair of edges of $J$ is a cocircuit of $\cR_d(G)$. Thus every edge of $J$ is a bridge of $\mathcal{R}_{d+1}(G)$ by \cref{lem:bridges}(b). This implies that $r_{d+1}(E)=|E\sm J|+|J|=|E|$, and hence $G$ is $\mathcal{R}_{d+1}$-independent.
      
      \textit{(b)} We proceed as in the proof of \cref{theorem:mainstrongerstress}(b). Let us add a set $F$ of $\mathcal{R}_d$-bridges to $G'$ so that $G'+F$ is $d$-rigid. Every nontrivial $\mathcal{R}_d$-component of $G'+F$ is minimally $\mathcal{R}_d$-connected, and hence part \textit{(a)} implies that $G'+F$ is $\mathcal{R}_{d+1}$-independent. Hence by a simple calculation we have,
      \begin{align}\label{eq:bound}
            |E'| = |E(G'+F)| - |F| &\leq (d+1)|V'|-\binom{d+2}{2} - |F| \\ \notag &= r_d(G'+F) + |V'| - d - 1 - |F| \\ \notag &= r_d(G') + |V'| - d - 1, 
      \end{align}
      which gives the first part of \emph{(b)}.
      
      If equality holds in \cref{eq:bound}, then $G'+F$ is minimally $(d+1)$-rigid, and hence globally $d$-rigid by \cref{theorem:rigidgloballyrigid}. \cref{theorem:mconnected} now implies that $F = \varnothing$. If $G'$ is minimally globally $d$-rigid, then $G'=K_{d+2}$ by \cref{theorem:minimallygloballyrigid}. Hence we may assume that  $G'-e$ is globally $d$-rigid for some $e\in E'$. We can now use \cref{theorem:mconnected} to deduce that both $G'$ and $G'-e$ are $\Rd$-connected.
      
      It follows that $G'$ is contained in some $\mathcal{R}_d$-component $G_0$ of $G$. Note that $G_0$ is necessarily nontrivial, and hence minimally $\mathcal{R}_d$-connected. By using parts \emph{(a)} and \emph{(b)} of \cref{lemma:eardeco},   we can construct an ear-decomposition ${\cal C}'$ of $G'$ and then extend it to an ear-decomposition of $G_0$. It follows from \cref{minimalpartial}(b) that $G'=\bigcup_{C\in {\cal C}'}C$ is minimally $\cR_d$-connected, contradicting the fact that $G'-e$ is also $\mathcal{R}_d$-connected.
      \qed

\medskip
Note
that, besides minimally $\mathcal{R}_d$-connected graphs, minimally redundantly $d$-rigid graphs and, more generally, minimally $\mathcal{R}_d$-bridgeless graphs also have the property that each of their nontrivial $\mathcal{R}_d$-components  is minimally $\mathcal{R}_d$-connected. Hence \cref{theorem:mainstronger} can  be applied to these graphs as well. On the other hand, the fact that these graphs are $\mathcal{R}_{d+1}$-independent follows immediately from \cref{lem:bridges}(b). Let us also note that \cite[Theorem 8.3]{CJT} gives a slightly stronger upper bound on the number of edges in a minimally $\mathcal{R}_3$-connected graph than the $d=3$ case of \cref{theorem:mainstronger}(b).

To close this section, we point out that an affirmative answer to the following conjecture from~\cite{stresslinked} would imply \cref{con:new}.

\begin{conjecture}\label{conjecture:stresslinkedMcomponent}\cite[Conjecture 6.4]{stresslinked}
      Let $G$ be a graph, and let $u,v$ be a pair of vertices in $G$. If $\{u,v\}$ is $d$-stress-linked in $G$, then there is some $\mathcal{R}_d$-component $G_0$ of $G$ such that $\{u,v\}$ is $d$-stress-linked in $G_0$.
\end{conjecture}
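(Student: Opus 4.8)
The plan is to locate the witnessing component explicitly rather than through an abstract argument. First, I would reduce to the case $uv\notin E(G)$: if $uv\in E(G)$, then taking $G_0$ to be the $\cR_d$-component of $G$ containing the edge $uv$ (a single edge if $uv$ is an $\cR_d$-bridge), we have $u,v\in V(G_0)$ and $uv\in E(G_0)$, so $\{u,v\}$ is trivially $d$-stress-linked in $G_0$ since adjacent pairs are always $d$-stress-linked. So assume $uv\notin E(G)$. As $\{u,v\}$ is $d$-stress-linked it is in particular $d$-linked, so there is an $\cR_d$-circuit $C$ of $G+uv$ with $uv\in E(C)$, together with a new stress $\omega_{uv}\in S(G+uv,p)$ normalised by $\omega_{uv}(uv)=1$. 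The goal is then to localise both the $d$-linkedness of $\{u,v\}$ and the defining stress condition to a single $\cR_d$-component of $G$. The one structural fact I would lean on throughout is that \emph{any $\cR_d$-connected subgraph $H$ of $G$ lies in a single $\cR_d$-component of $G$}: indeed $\cR_d(H)$ is the restriction of $\cR_d(G)$ to $E(H)$, so the circuits of $H$ are circuits of $G$, and $\cR_d$-connectedness of $H$ places all of $E(H)$ in one connected component of $\cR_d(G)$.

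For $d=2$ this plan can be completed using \cref{theorem:2dstresslinked}. By that theorem, since $\{u,v\}$ is $2$-stress-linked in $G$ and $uv\notin E$, there is an $\cR_2$-connected subgraph $H$ of $G$ with $u,v\in V(H)$ and $\kappa(u,v;H)\geq 3$. By the structural fact above, $H$ is contained in a single $\cR_2$-component $G_0$ of $G$, and in particular $u,v\in V(G_0)$. Applying \cref{theorem:2dstresslinked} once more, now to $G_0$ with the same witness $H$, we conclude that $\{u,v\}$ is $2$-stress-linked in $G_0$. This settles the case $d=2$.

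For $d\geq 3$ no combinatorial characterisation of $d$-stress-linked pairs analogous to \cref{theorem:2dstresslinked} is available, so a witnessing substructure must be produced by linear-algebraic means, and the approach I would pursue is through the shared stress rank. Because $\cR_d$-circuits are connected in the rigidity matroid, every equilibrium stress is a sum of stresses each supported on the edges of a single $\cR_d$-component; hence, writing $G_1,\dots,G_k$ for the $\cR_d$-components and $W_i$ for the span of the rows of the stress matrices supported on $G_i$, one obtains $S(G,p)=\bigoplus_i S(G_i,p)$ and $\sharedstressrank{d}{G}=\dim\big(\sum_i W_i\big)$. Adding $uv$ merges exactly the components met by $C$ into one component $H$ of $G+uv$, whose stresses are spanned by the merged $W_i$ together with the single extra row space of $\Omega_{uv}$. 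The identity $\sharedstressrank{d}{G}=\sharedstressrank{d}{G+uv}$, valid because $\{u,v\}$ is $d$-stress-linked, then forces the rows of $\Omega_{uv}$ into $\sum_i W_i$. The aim would be to upgrade this into the statement that a single merged component $G_0$ already satisfies the full stress condition, using that $\{u,v\}$ is $d$-linked in $H-uv$ and gluing-type reasoning in the spirit of \cref{theorem:gluing}.

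The main obstacle lies in this last upgrade. Localising the defining condition of $d$-stress-linkedness---\emph{for every placement $q$ with $S(G,p)\subseteq S(G,q)$ one has $\omega_{uv}\in S(G+uv,q)$}---to a single component is delicate because distinct $\cR_d$-components may share vertices, so a test placement $q$ cannot be prescribed independently on each component, and the hypothesis $S(G,p)\subseteq S(G,q)$ couples the components through these shared vertices. Moreover, \cref{theorem:gluing} shows that stress-linkedness in $G$ can be created purely from $d$-linkedness spread across several components, so one genuinely has to argue that the merging induced by the circuit $C$ concentrates enough structure in one component to yield full stress-linkedness there, and not merely $d$-linkedness. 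Pinning down the correct component $G_0$ and verifying the stress condition for every admissible $q$ restricted to it is exactly the point at which the $d=2$ argument invoked \cref{theorem:2dstresslinked}, and I expect this to be the crux of any proof for $d\geq 3$.
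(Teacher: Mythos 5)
You should first note that the statement you were asked to prove is not a theorem of the paper at all: it is an open conjecture (quoted from~\cite{stresslinked}), and the paper offers no proof of it. Its only role in the paper is as a hypothesis in \cref{prop:minimallyMconnectedstresslinked}, which derives \cref{con:new} \emph{conditionally} on the conjecture being true. So there is no proof in the paper to compare yours against; a complete proof would be a new result, not a reconstruction.

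Measured against that bar, your proposal has a genuine gap, and you identify it yourself: for $d \geq 3$ you only describe a programme, and the crux --- localising the quantifier ``for every $q$ with $S(G,p) \subseteq S(G,q)$'' to a single $\cR_d$-component when distinct components share vertices --- is left unresolved. The shared-stress-rank observations you make (that $S(G,p)$ decomposes over the $\cR_d$-components, and that $d$-stress-linkedness forces the row space of $\Omega_{uv}$ into the span of the component stress matrices) are essentially the content of \cite[Proposition 4.1]{stresslinked} and do not by themselves produce the witnessing component, precisely because (as you note via \cref{theorem:gluing}) stress-linkedness in $G$ can arise from structure spread across several components. On the positive side, your $d=2$ argument is correct and complete: the reduction to $uv \notin E$, the use of \cref{theorem:2dstresslinked} to extract an $\cR_2$-connected witness $H$ with $\kappa(u,v;H) \geq 3$, the observation that $\cR_2(H)$ is a restriction of $\cR_2(G)$ so that $E(H)$ lies inside a single $\cR_2$-component $G_0$, and the reapplication of \cref{theorem:2dstresslinked} inside $G_0$ are all sound. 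This settles the planar case of the conjecture (and is consistent with the paper's remark that \cref{theorem:2dstresslinked} makes minimally $\cR_2$-connected graphs $2$-stress-independent), but the conjecture for $d \geq 3$ remains open, exactly as in the paper.
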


\begin{proposition}\label{prop:minimallyMconnectedstresslinked}
      Suppose \cref{conjecture:stresslinkedMcomponent} is true. Let $G$ be a graph, let $G_0$ be a nontrivial $\mathcal{R}_d$-component of $G$, and let $uv$ be an edge of $G_0$ such that $G_0-uv$ is not $\mathcal{R}_d$-connected. Then $\{u,v\}$ is not $d$-stress-linked in $G-uv$. In particular, if every nontrivial $\mathcal{R}_d$-component of $G$ is minimally $\mathcal{R}_d$-connected, then $G$ is $d$-stress-independent. 
\end{proposition}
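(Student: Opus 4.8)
The plan is to argue by contradiction and use \cref{conjecture:stresslinkedMcomponent} to push the stress-linkedness down into a single $\mathcal{R}_d$-component of $G_0-uv$, and then contradict the $\mathcal{R}_d$-connectivity of $G_0$ by exhibiting an explicit matroid separator. So suppose $\{u,v\}$ is $d$-stress-linked in $G-uv$ (note that $u,v$ are nonadjacent there). By \cref{conjecture:stresslinkedMcomponent} applied to $G-uv$, there is an $\mathcal{R}_d$-component $H$ of $G-uv$ in which $\{u,v\}$ is $d$-stress-linked, and in particular $d$-linked; hence $r_d(H+uv)=r_d(H)$, so $uv\in\cl(E(H))$, and there is an $\mathcal{R}_d$-circuit $C$ with $uv\in C\subseteq E(H)+uv$.

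First I would locate $H$ inside $G_0$. Since $uv\in E(G_0)$ and circuits lie within a single matroid component, $C\subseteq E(G_0)$, so $\varnothing\neq C-uv\subseteq E(H)\cap(E(G_0)-uv)$. Because $E(G_0)$ is a connected component of $\mathcal{R}_d(G)$, deleting $uv$ gives the direct sum $\mathcal{R}_d(G-uv)=\mathcal{R}_d(G_0-uv)\oplus \mathcal{R}_d(G)|_{E(G)-E(G_0)}$, so every $\mathcal{R}_d$-component of $G-uv$ is either contained in $E(G_0)-uv$ or disjoint from $E(G_0)$. As $H$ meets $E(G_0)-uv$, it is an $\mathcal{R}_d$-component of $G_0-uv$. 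Moreover $G_0$ has minimum degree at least two (a degree-one vertex would make its unique incident edge an $\mathcal{R}_d$-bridge, impossible in an $\mathcal{R}_d$-connected graph), so $G_0-uv$ has no isolated vertices, and its failure to be $\mathcal{R}_d$-connected therefore means that $\mathcal{R}_d(G_0-uv)$ is disconnected. Hence $H$ is not its only component and $R:=(E(G_0)-uv)-E(H)\neq\varnothing$.

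The crux is then to show that $X:=E(H)+uv$ is a nontrivial separator of $\mathcal{M}:=\mathcal{R}_d(G_0)$. Using $uv\in\cl(E(H))$ together with the fact that $E(H)$ is a separator of $\mathcal{M}\setminus uv=\mathcal{R}_d(G_0-uv)$ with complement $R$, I would compute $r_{\mathcal{M}}(X)=r_{\mathcal{M}}(E(H))=r_{\mathcal{M}\setminus uv}(E(H))$ and $r_{\mathcal{M}}(R)=r_{\mathcal{M}\setminus uv}(R)$, so that $r_{\mathcal{M}}(X)+r_{\mathcal{M}}(R)=r(\mathcal{M}\setminus uv)=r(\mathcal{M})$, the last equality holding because $uv\in\cl(E(H))$ is not a coloop. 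Since $\varnothing\neq X\neq E(G_0)$, this writes $\mathcal{R}_d(G_0)$ as a direct sum, contradicting that $G_0$ is a connected $\mathcal{R}_d$-component. This contradiction proves the first claim; the ``in particular'' statement then follows since every edge of $G$ is either an $\mathcal{R}_d$-bridge $uv$ (for which $\{u,v\}$ is not even $d$-linked, hence not $d$-stress-linked, in $G-uv$) or lies in a nontrivial $\mathcal{R}_d$-component $G_0$, which by hypothesis is minimally $\mathcal{R}_d$-connected, so $G_0-uv$ is not $\mathcal{R}_d$-connected and the first part applies.

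I expect the main obstacle to be resisting the tempting but doomed route of trying to produce a size-two cocircuit through $uv$ from \cref{theorem:stresslinked}(c): since $H$ is $\mathcal{R}_d$-connected it has no $\mathcal{R}_d$-bridges, so $uv$ genuinely lies in no size-two cocircuit of $\mathcal{R}_d(H+uv)$, and likewise need not lie in any size-two cocircuit of $\mathcal{R}_d(G_0)$ (one can build $\mathcal{R}_d$-connected $G_0$ with $G_0-uv$ disconnected yet $uv$ in no such cocircuit). This is exactly why \cref{conjecture:stresslinkedMcomponent} is indispensable rather than a mere cocircuit count, and the real content is the separator computation above, whose delicate point is checking that deleting the single edge $uv$ leaves the rank unchanged (so $r(\mathcal{M}\setminus uv)=r(\mathcal{M})$) while keeping $X$ and $R$ rank-complementary.
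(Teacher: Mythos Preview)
Your argument is correct and follows the paper's strategy closely: both invoke \cref{conjecture:stresslinkedMcomponent} to obtain an $\mathcal{R}_d$-component $H$ of $G-uv$ in which $\{u,v\}$ is $d$-linked, observe that $H$ must then be an $\mathcal{R}_d$-component of $G_0-uv$, and derive a contradiction with the $\mathcal{R}_d$-connectivity of $G_0$. The only divergence is in how that final contradiction is reached. The paper supposes $G_0\neq H+uv$, picks an edge $e\in E(G_0)\setminus(E(H)+uv)$ together with an $\mathcal{R}_d$-circuit $C'$ of $G_0$ through $uv$ and $e$, and applies strong circuit elimination to $C$ and $C'$ to produce a circuit in $G_0-uv$ meeting both $E(H)$ and $e$, contradicting that $H$ is a component of $G_0-uv$; it then concludes $G_0-uv=H$ is $\mathcal{R}_d$-connected. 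You instead use $uv\in\cl(E(H))$ and a direct rank computation to exhibit $(E(H)+uv,\,R)$ as a nontrivial separation of $\mathcal{R}_d(G_0)$. Both are short, standard matroid manoeuvres; your separator argument is arguably a touch cleaner in that it avoids the circuit-axiom bookkeeping, while the paper's approach makes the obstruction more explicit. Your closing remark about why \cref{theorem:stresslinked}(c) alone is insufficient is a nice observation not spelled out in the paper.
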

\begin{proof}
      Suppose, for a contradiction, that $\{u,v\}$ is $d$-stress-linked in $G-uv$. By \cref{conjecture:stresslinkedMcomponent}, there is an $\mathcal{R}_d$-component $H_0$ of $G-uv$ such that $\{u,v\}$ is $d$-stress-linked in $H_0$. In particular, $\{u,v\}$ is $d$-linked in $H_0$, and hence there exists an $\mathcal{R}_d$-circuit $C$ in $H_0+uv$ containing $uv$. 
      It follows that $H_0 + uv$ is $\mathcal{R}_d$-connected, and thus  is contained in $G_0$. 
      
      Suppose $G_0\neq H_0+uv$. Let $e$ be an edge of $G_0$ not in $H_0+uv$. Since $G_0$ is $\mathcal{R}_d$-connected, there exists an $\mathcal{R}_d$-circuit $C'$ in $G_0$  with $\{uv,e\} \subseteq  E(C')$. Using the strong circuit exchange axiom on $(C \cup C') - uv$ and $e$, we obtain an $\mathcal{R}_d$-circuit $C_0$ in $G_0-uv$ that contains $e$, as well as at least one edge from $H_0$. Then $H_0\cup C_0$ is $\mathcal{R}_d$-connected. This contradicts the fact that $H_0$ is an $\mathcal{R}_d$-component of $G-uv$.
      
      Hence $G_0= H_0+uv$. Since $H_0$ is $\mathcal{R}_d$-connected, we deduce that $G_0 - uv$ is also $\mathcal{R}_d$-connected, a contradiction. 
\end{proof}

\section{Closing remarks}\label{section:closing}

\subsection{A bonus result}

The following proposition is an immediate consequence of \cref{theorem:vertexredundantlylinkedstronger}.

\begin{proposition}\label{proposition:contrapositive}
      Let $G$ be a $d$-stress-independent graph, $uv \in E(G)$, 
      and put \[U = \bigcap\{V(C): C \text{ is an } \mathcal{R}_d\text{-circuit in } G \text{ with } uv \in E(C)\} 
      .\] 
      If $uv$ is not an $\mathcal{R}_d$-bridge in $G$, then $|U| \geq d+2$.
\end{proposition}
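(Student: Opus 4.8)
The plan is to read the statement as the contrapositive of \cref{theorem:vertexredundantlylinkedstronger} applied to the graph $G-uv$. First I would record the two facts that set this up. Since $uv$ is not an $\mathcal{R}_d$-bridge, it lies in some $\mathcal{R}_d$-circuit of $G$, and every $\mathcal{R}_d$-circuit has at least $d+2$ vertices (the smallest being $K_{d+2}$, by the sparsity bound \eqref{eq:Rdsparsity}); hence $|V(G-uv)| = |V(G)| \geq d+2$, so \cref{theorem:vertexredundantlylinkedstronger} is applicable to $G-uv$. Second, because $G$ is $d$-stress-independent, by definition $\{u,v\}$ is \emph{not} $d$-stress-linked in $G-uv$.

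The key step is to match the set $U$ in the statement with the set that \cref{theorem:vertexredundantlylinkedstronger} associates to $G-uv$. Writing
\[U' = \{z \in V(G) - \{u,v\} : \{u,v\} \text{ is not } d\text{-linked in } (G-uv) - z\},\]
I would show that $U = U' \cup \{u,v\}$, so that $|U| = |U'| + 2$. This rests on the circuit description of $d$-linkedness from the preliminaries: for $z \neq u,v$, the pair $\{u,v\}$ is $d$-linked in $G-uv-z$ if and only if $uv$ is not an $\mathcal{R}_d$-bridge of $G-z$ (using $(G-uv-z)+uv = G-z$), which holds exactly when there is an $\mathcal{R}_d$-circuit $C$ of $G$ with $uv \in E(C)$ and $z \notin V(C)$ (a circuit has no isolated vertices, so a circuit of $G$ avoiding $z$ is precisely a circuit of $G-z$). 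Negating, $z \in U'$ iff every $\mathcal{R}_d$-circuit of $G$ through $uv$ contains $z$, i.e.\ $z \in U$. Since $u,v \in V(C)$ for every such circuit $C$, we have $u,v \in U$ trivially, and these two vertices lie outside $U'$; this yields $U = U' \cup \{u,v\}$ as a disjoint union.

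Finally I would invoke the contrapositive of \cref{theorem:vertexredundantlylinkedstronger} for $G-uv$: since $\{u,v\}$ is not $d$-stress-linked there, we cannot have $|U'| \leq d-1$, so $|U'| \geq d$ and therefore $|U| = |U'| + 2 \geq d+2$, as claimed.

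I do not expect a genuine obstacle: the whole argument is a dictionary between $d$-linkedness and the presence of $\mathcal{R}_d$-circuits through $uv$ avoiding a prescribed vertex, together with one application of an already-proved theorem. The only point needing care is the book-keeping when passing between $G$, $G-uv$, and $G-uv-z$ — in particular noting that deleting the edge $uv$ leaves the vertex set unchanged, so the hypothesis $|V| \geq d+2$ transfers unchanged to $G-uv$.
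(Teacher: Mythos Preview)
Your proposal is correct and is exactly the argument the paper has in mind: the paper records the proposition as ``an immediate consequence of \cref{theorem:vertexredundantlylinkedstronger}'' without further detail, and your write-up simply unpacks that consequence by identifying $U$ with $U'\cup\{u,v\}$ via the circuit characterisation of $d$-linkedness and then taking the contrapositive.
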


We can use \cref{proposition:contrapositive} to prove the following.

\begin{theorem}\label{theorem:noK4}
      If $G$ is an $\mathcal{R}_2$-connected $2$-stress-independent graph on at least five vertices, then $G$ does not contain a copy of $K_4$.  
\end{theorem}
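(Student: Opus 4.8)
The plan is to argue by contradiction: suppose $G$ is $\mathcal{R}_2$-connected, $2$-stress-independent, has at least five vertices, and contains a copy of $K_4$ on vertex set $\{a,b,c,d\}$. The strategy is to locate an edge inside this $K_4$ and apply \cref{proposition:contrapositive} to derive a contradiction from the bound $|U| \geq d+2 = 4$. First I would observe that since $G$ is $\mathcal{R}_2$-connected with at least five vertices, it is not itself equal to $K_4$, so there must be some vertex outside the copy of $K_4$, and the copy of $K_4$ cannot be an isolated $\mathcal{R}_2$-component.

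The key computation is to understand the set $U$ for an edge $uv$ of the embedded $K_4$. Since $K_4$ is itself an $\mathcal{R}_2$-circuit containing $uv$, and $U$ is the intersection of the vertex sets of \emph{all} $\mathcal{R}_2$-circuits through $uv$, we immediately get $U \subseteq \{a,b,c,d\}$, so $|U| \leq 4$. To force a contradiction with $|U| \geq 4$ we want to produce, for a well-chosen edge $uv$ of the $K_4$, a second $\mathcal{R}_2$-circuit $C'$ through $uv$ whose vertex set does not contain all four of $a,b,c,d$; this would give $|U| \leq 3 < 4$. Since $G$ is $\mathcal{R}_2$-connected, every pair of edges lies in a common $\mathcal{R}_2$-circuit, so for any edge $e$ of $G$ incident to a vertex of the $K_4$ but leaving it, there is an $\mathcal{R}_2$-circuit $C'$ containing both $uv$ and $e$. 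The task is to choose $uv$ and $e$ so that this circuit $C'$ is forced to omit at least one vertex of $\{a,b,c,d\}$.

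The main obstacle I anticipate is controlling the vertex set of this second circuit $C'$: $\mathcal{R}_2$-connectedness guarantees \emph{existence} of a circuit through two given edges, but not that it avoids a specific vertex. To handle this I would use the structural results available, most likely combining \cref{theorem:largecircuit} and \cref{lemma:smallcircuits} with the $2$-sum/$2$-separation machinery of \cref{lemma:Mconnected2sum,lemma:separatingpair}. The natural approach is to consider a minimal $\mathcal{R}_2$-circuit $C'$ containing $uv$ and some external edge: by \cref{lemma:smallcircuits} the only $\mathcal{R}_2$-circuits on at most six vertices with small shared stress rank are $K_4$, $W_5$, and $K_4 \oplus_2 K_4$, and these have a well-understood separation structure. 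Alternatively, since $uv$ is not an $\mathcal{R}_2$-bridge (the $K_4$ provides a circuit through it), \cref{proposition:contrapositive} directly yields $|U| \geq 4$, and combined with $U \subseteq \{a,b,c,d\}$ this forces $U = \{a,b,c,d\}$, meaning \emph{every} $\mathcal{R}_2$-circuit through $uv$ contains all four vertices of the $K_4$.

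From $U = \{a,b,c,d\}$ for \emph{each} of the six edges of the $K_4$, I would extract a strong rigidity-theoretic constraint and seek a contradiction with $\mathcal{R}_2$-connectedness and the presence of an external vertex. Concretely, the fact that every circuit through any edge of the $K_4$ contains all of $\{a,b,c,d\}$ suggests that $\{a,b,c,d\}$ behaves like a rigid or tightly-linked block; one would then try to show this forces a separating pair that violates \cref{lemma:separatingpair}, or that some pair among $a,b,c,d$ becomes $2$-stress-linked after deleting an edge, contradicting $2$-stress-independence via \cref{theorem:2dstresslinked}. I expect the cleanest route is to pick two opposite-type edges of the $K_4$ and use $\mathcal{R}_2$-connectedness to route a circuit outward, then apply \cref{theorem:2dstresslinked} to exhibit a $2$-stress-linked pair in some $G-f$, the desired contradiction. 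The hard part will be the bookkeeping that guarantees the external circuit genuinely omits a vertex of the $K_4$, and I would lean on the minimality of the chosen circuit together with the classification in \cref{lemma:smallcircuits} to pin this down.
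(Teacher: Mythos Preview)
Your opening move is right: \cref{proposition:contrapositive} applied to any edge $uv$ of the embedded $K_4$ (which is never an $\mathcal{R}_2$-bridge, since $K_4$ itself is a circuit through it) forces every $\mathcal{R}_2$-circuit through $uv$ to contain all four vertices $\{a,b,c,d\}$. This is exactly what the paper does. But from here your plan drifts. You spend several lines trying to find a second circuit through $uv$ that \emph{omits} a vertex of $K_4$---that is precisely what \cref{proposition:contrapositive} forbids, and your last sentence shows you are still looking for this: ``the bookkeeping that guarantees the external circuit genuinely omits a vertex of the $K_4$.'' The circuit omits an \emph{edge}, not a vertex, and that is the handle you need.

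Here is the missing concrete step. Fix any edge $uv$ of $K$ and use $\mathcal{R}_2$-connectedness plus $|V(G)|\ge 5$ to get a single circuit $C\neq K$ with $uv\in E(C)$. \cref{proposition:contrapositive} gives $V(K)\subseteq V(C)$; since $K$ is itself a circuit, $K\not\subseteq C$, so some edge $xy$ of $K$ is absent from $C$. Now pass to the subgraph $H=C\cup K$ (not all of $G$---this localisation is what makes the argument clean). Then $H$ is $2$-stress-independent (as a subgraph of $G$), so $\{x,y\}$ is not $2$-stress-linked in $H-xy$; but $H-xy\supseteq C$ is $\mathcal{R}_2$-connected, so \cref{theorem:2dstresslinked} forces $\kappa(x,y;H-xy)\le 2$. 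Since $K-xy$ already supplies two internally disjoint $x$--$y$ paths through $u$ and $v$, the separating pair must be exactly $\{u,v\}$---an \emph{adjacent} pair in the $\mathcal{R}_2$-connected graph $H-xy$, contradicting \cref{lemma:separatingpair}(a). You named both \cref{theorem:2dstresslinked} and \cref{lemma:separatingpair} as likely tools; what you were missing is this edge-versus-vertex distinction and the passage to $H=C\cup K$. The lemmas on shared stress rank and \cref{theorem:largecircuit} are not needed.
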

\begin{proof}
      Suppose, for a contradiction, that $G$ contains a subgraph $K\cong K_4$. Let us fix vertices $u,v \in V(K)$. Since $G$ is $\mathcal{R}_2$-connected and $G \neq K$, there exists another $\mathcal{R}_2$-circuit $C$ in $G$ with $uv \in E(C)$. It follows from \cref{proposition:contrapositive} that $V(K) \subseteq V(C)$. Since $K$ cannot be a subgraph of $C$, there exists a pair of vertices $x,y \in V(K)$ such that $xy \notin E(C)$. %
      
      Let us consider the subgraph $H = C \cup K$ of $G$. Since $G$ is $2$-stress-independent, so is $H$, and hence $\{x,y\}$ is not $2$-stress-linked in $H-xy$. Since $H - xy$ is a supergraph of $C$, it is also $\mathcal{R}_2$-connected, and thus \cref{theorem:2dstresslinked} implies that \[2 \geq \kappa(x,y;H-xy) \geq \kappa(x,y;K-xy) = 2,\] from which we deduce that $\{u,v\}$ is a separating vertex pair of $H-xy$. But by \cref{lemma:separatingpair}(a), every separating vertex pair in a $2$-stress-independent graph must be nonadjacent, a contradiction.   
\end{proof}

Let us note that  we can give an alternative proof of \cref{theorem:noK4} in the special case when $G$ is a minimally globally $2$-rigid graph using \cref{theorem:jacksonjordan}. We omit the details.

It is unclear whether \cref{theorem:noK4} generalises to higher dimensions. We pose the special case of minimally globally $d$-rigid graphs as a conjecture.

\begin{conjecture}
      If $G$ is a minimally globally $d$-rigid graph on at least $d+3$ vertices, then $G$ does not contain a copy of $K_{d+2}$.
\end{conjecture}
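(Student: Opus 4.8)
The plan is to argue by contradiction: assume $G$ contains a subgraph $K \cong K_4$ and then manufacture an \emph{adjacent} separating pair, which is forbidden by \cref{lemma:separatingpair}(a). The engine of the argument is \cref{proposition:contrapositive}, which controls the vertex sets of the $\mathcal{R}_2$-circuits passing through a fixed non-bridge edge, combined with the connectivity characterisation of $2$-stress-linkedness in \cref{theorem:2dstresslinked}.

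First I would fix two vertices $u,v \in V(K)$ and observe that, since $G$ is $\mathcal{R}_2$-connected with $|V(G)| \geq 5$, we have $G \neq K$ and no edge of $G$ is an $\mathcal{R}_2$-bridge; in particular $uv$ is not a bridge. Choosing any edge $e$ incident to a vertex outside $V(K)$ (such a non-isolated vertex exists as $|V(G)| > |V(K)|$) and applying $\mathcal{R}_2$-connectivity yields an $\mathcal{R}_2$-circuit $C$ with $uv,e \in E(C)$, so $C \neq K$. Now \cref{proposition:contrapositive} (with $d=2$) gives $|U| \geq 4$, where $U$ is the intersection of the vertex sets of all $\mathcal{R}_2$-circuits through $uv$. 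Since $K$ is itself such a circuit, $U \subseteq V(K)$, and $|V(K)| = 4$ forces $U = V(K)$, whence $V(K) \subseteq V(C)$.

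Next, because an $\mathcal{R}_2$-circuit cannot properly contain another $\mathcal{R}_2$-circuit, $K$ is not a subgraph of $C$; as $V(K) \subseteq V(C)$, some edge $xy \in E(K)$ is absent from $C$. In $K_4$ the two endpoints of the missing edge together with the remaining two vertices partition $V(K)$, so after relabelling $\{x,y\}$ and $\{u,v\}$ are the two complementary pairs of $V(K)$. I would then set $H = C \cup K$ and note that $H$, and also $H - xy$ (which still contains the circuit $C$, since $xy \notin E(C)$), are $\mathcal{R}_2$-connected, and that both are $2$-stress-independent as subgraphs of $G$ by \cref{lemma:stressindependentoperations}(a).

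Finally, since $H$ is $2$-stress-independent, $\{x,y\}$ is not $2$-stress-linked in $H - xy$; as $H - xy$ is $\mathcal{R}_2$-connected, \cref{theorem:2dstresslinked} forces $\kappa(x,y;H-xy) \leq 2$. On the other hand $K - xy \subseteq H - xy$ supplies the two internally disjoint paths through $u$ and through $v$, so $\kappa(x,y;H-xy) = 2$. It follows that deleting $\{u,v\}$ must disconnect $x$ from $y$ in $H - xy$ — otherwise a third $x,y$-path avoiding $u,v$ would combine with the two spoke paths to give $\kappa \geq 3$ — so $\{u,v\}$ is a separating pair of the $\mathcal{R}_2$-connected $2$-stress-independent graph $H - xy$. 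But $uv$ is an edge of $H - xy$, contradicting \cref{lemma:separatingpair}(a). The delicate point is precisely this last step: one must rule out that the size-two separator comes entirely from $C$ rather than being the adjacent pair $\{u,v\}$, and this is exactly where the rigidity of the $K_4$ structure (its unique two-vertex cut after deletion of an edge) is used.
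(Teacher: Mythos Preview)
Your argument is correct, but it proves only the case $d=2$ of the statement. The statement you were asked to address is the paper's \emph{conjecture} for general $d$; the paper does not prove it, and explicitly says that it is unclear whether the $d=2$ result (\cref{theorem:noK4}) generalises. What you have written is precisely a proof of \cref{theorem:noK4} (specialised to minimally globally $2$-rigid $G$, which is indeed $\cR_2$-connected by \cref{theorem:mconnected} and $2$-stress-independent by \cref{cor:d-stress}), and your route is essentially identical to the paper's own proof of that theorem: apply \cref{proposition:contrapositive} to force $V(K)\subseteq V(C)$, locate a missing edge $xy$ of $K$ in $C$, pass to $H=C\cup K$, use \cref{theorem:2dstresslinked} to get $\kappa(x,y;H-xy)=2$, and conclude that the complementary pair $\{u,v\}\subset V(K)$ is an adjacent separating pair, contradicting \cref{lemma:separatingpair}(a). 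Your handling of the ``delicate point'' (that the two spoke paths $x\!-\!u\!-\!y$ and $x\!-\!v\!-\!y$ force any size-two $(x,y)$-cut to be $\{u,v\}$) is exactly what the paper leaves implicit, and your explicit relabelling of $\{u,v\}$ as $V(K)\setminus\{x,y\}$ is cleaner than the paper's presentation.

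The genuine gap is that none of this extends to $d\geq 3$. Both of your key tools are two-dimensional: the combinatorial characterisation of $2$-stress-linked pairs in \cref{theorem:2dstresslinked} has no known analogue for $d\geq 3$, and \cref{lemma:separatingpair} is likewise a $d=2$ statement (its proof rests on \cref{theorem:2dstresslinked} and on \cref{lemma:Mconnected2sum}). Without a higher-dimensional replacement for these, your strategy of producing an adjacent small separator does not go through. So as a proof of the conjecture the proposal is incomplete; as a proof of the $d=2$ case it matches the paper's argument.
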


\subsection{Minimally \texorpdfstring{$k$}{k}-connected graphs}\label{subsection:connectivity}

We relate \cref{theorem:mainstrongerstress,theorem:2dsharpboundstrong} to the existing literature on the sparsity of minimally $k$-connected graphs.
Let $G=(V,E)$ be a minimally $k$-connected graph. As mentioned in the introduction, Mader~\cite{mader_1972a} proved that $|E|\leq k|V| - \binom{k+1}{2}$, and that if $|V|\geq 3k-2$, then
the stronger bound $|E|\leq k(|V|-k)$ applies. The latter result is tight, as shown by the complete bipartite graph $K_{k,n-k}$ for $n \geq 2k$. 

Mader's result can be extended to subgraphs by a result of Nishizeki and Poljak~\cite{nishizeki.poljak_1994} which tells us
 that $E$ can be covered by $k$ forests and hence implies that
$|E(H)|\leq k|V(H)|-k$ for all subgraphs $H\subseteq G$.
\cref{cor:d-stress} and 
\cref{theorem:mainstrongerstress} immediately give the following stronger result.
For completeness, we present a more direct proof. 

\begin{theorem}\label{theorem:mainstronger2}
      Let $k$ be a positive integer, and let $G$ be a 
      minimally $k$-connected graph. Then
      \begin{enumerate}
            \item $G$ is $\mathcal{R}_{k}$-independent;
            \item For all subgraphs $G'=(V',E')$  of $G$ with  $|V'|\geq k+1$, we have \[|E'|\leq r_{k-1}(G') + |V'| - k \leq k|V'|-\binom{k+1}{2},\] 
            where the first inequality holds with equality if and only if $G'=K_{k+1}$. 
      \end{enumerate}
\end{theorem}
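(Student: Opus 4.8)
The plan is to connect minimally $k$-connected graphs to the $d$-stress-independent framework by setting $d = k-1$, and then invoke \cref{cor:d-stress} and \cref{theorem:mainstrongerstress}. The key observation is that by \cref{cor:d-stress}, a minimally $(d+1)$-connected graph is $d$-stress-independent, so taking $d = k-1$ tells us that every minimally $k$-connected graph is $(k-1)$-stress-independent. Parts \emph{(a)} and \emph{(b)} of \cref{theorem:mainstrongerstress} (with $d$ replaced by $k-1$) then yield exactly the two conclusions claimed here, since $d+1 = k$, $d+2 = k+1$, and $\binom{d+2}{2} = \binom{k+1}{2}$. Thus the statement is an immediate corollary, and the only real content is the more self-contained argument promised in the excerpt.

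For the direct proof, I would first establish part \emph{(a)} by showing every edge of $G$ is an $\mathcal{R}_k$-bridge. Fix an edge $uv \in E$. Since $G$ is minimally $k$-connected, $G - uv$ is not $k$-connected, so there is a vertex cut $S$ of size at most $k-1$ separating $u$ from $v$ in $G - uv$. I would then argue that the endvertices $u,v$ fail to be $(k-1)$-linked in enough deleted subgraphs: concretely, for any vertex $z \notin S \cup \{u,v\}$, the pair $\{u,v\}$ can be controlled via \cref{theorem:vertexredundantlylinkedstronger} applied in dimension $d = k-1$. The cleanest route is to show $\{u,v\}$ is not $(k-1)$-stress-linked in $G - uv$ and conclude via \cref{theorem:linkedtostresslinked} that $uv$ is an $\mathcal{R}_{k}$-bridge; a small cut of size $k-1$ obstructs $(k-1)$-stress-linkedness because \cref{theorem:stresslinked}(b) would force $\kappa(u,v; G-uv) \geq k$, contradicting the existence of the $(k-1)$-element separator $S$.

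For part \emph{(b)}, I would mirror the proof of \cref{theorem:mainstrongerstress}(b) verbatim with $d = k-1$. Add a set $F$ of $\mathcal{R}_{k-1}$-bridges to $G' = (V',E')$ so that $G' + F$ becomes $(k-1)$-rigid; since adding bridges preserves $(k-1)$-stress-independence (\cref{lemma:stressindependentoperations}(b)), part \emph{(a)} gives that $G'+F$ is $\mathcal{R}_{k}$-independent, whence the Maxwell bound
\[
|E'| = |E(G'+F)| - |F| \leq k|V'| - \binom{k+1}{2} - |F| = r_{k-1}(G') + |V'| - k.
\]
The equality case follows exactly as before: if the first inequality is tight then $G'+F$ is minimally $k$-rigid, hence globally $(k-1)$-rigid by \cref{theorem:rigidgloballyrigid}, forcing $F = \varnothing$ by Hendrickson's theorem, and then \cref{lem:new} together with \cref{theorem:minimallygloballyrigid} identifies $G'$ as $K_{k+1}$.

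The main obstacle will be part \emph{(a)}, specifically making precise the claim that a $(k-1)$-element separator in $G - uv$ genuinely obstructs $(k-1)$-stress-linkedness of $\{u,v\}$. The connectivity count must align correctly: \cref{theorem:stresslinked}(b) requires $\kappa(u,v;\,\cdot\,) \geq (k-1)+1 = k$ internally disjoint paths for a $(k-1)$-stress-linked pair, and minimal $k$-connectivity supplies a cut of size exactly $k-1$ after deleting $uv$, so the two bounds are in tension by exactly one path — I would need to verify that deleting the edge $uv$ itself does not accidentally restore the missing path. This is where the hypothesis of minimal (rather than merely) $k$-connectivity is essential, and it is the one step where I would be most careful to avoid an off-by-one error.
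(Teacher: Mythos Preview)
Your proposal is correct. The first paragraph already gives a complete proof: the paper itself remarks, just before stating the theorem, that it follows immediately from \cref{cor:d-stress} and \cref{theorem:mainstrongerstress}, and then offers a ``more direct'' proof only for completeness.

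Your ``direct'' argument for part \emph{(a)}, however, is not the paper's. You route through the stress-linked machinery (\cref{theorem:stresslinked}(b) to block $(k-1)$-stress-linkedness, then the contrapositive of \cref{theorem:linkedtostresslinked}), which is really just an unwinding of the corollary route. The paper instead uses a more elementary rigidity fact: every edge $uv$ of an $\mathcal{R}_k$-circuit $H$ satisfies $\kappa(u,v;H-uv)\geq k$. Since minimal $k$-connectivity forces $\kappa(u,v;G-uv)\leq k-1$ for every edge $uv$, no edge of $G$ can lie in an $\mathcal{R}_k$-circuit, so $G$ is $\mathcal{R}_k$-independent in one line. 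This bypasses stress-linkedness entirely. Your worry about an off-by-one error is unfounded: if $G$ is $k$-connected and $G-uv$ is not, any $(\leq k-1)$-cut $S$ of $G-uv$ must separate $u$ from $v$ (otherwise $S$ would already be a cut of $G$), so $\kappa(u,v;G-uv)\leq k-1$ exactly as needed.

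For part \emph{(b)} your outline matches the paper's, with a small difference in the equality case: you invoke \cref{lem:new} and then \cref{theorem:minimallygloballyrigid}, while the paper instead argues directly that if $G'-e$ were globally $(k-1)$-rigid it would be $k$-connected, giving $\kappa(u,v;G-uv)\geq k$ for $e=uv$, contradicting minimality. Both arguments work.
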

\begin{proof}
      \textit{(a)} It is known that if $uv$ is an edge in an $\mathcal{R}_k$-circuit $H$, then $\kappa(u,v;H-uv)\geq k$ holds (see, e.g.,~\cite[Lemma 2.5]{jackson.jordan_2005a}).  
      The minimality of $G$ implies that $\kappa(u,v;G-uv) \leq k-1$
      for all $uv\in E(G)$. Thus  $G$ is $\mathcal{R}_{k}$-independent.

      \textit{(b)} 
      We proceed as in the proofs of \cref{theorem:mainstrongerstress}(b) and \cref{theorem:mainstronger}(b). Let us add a set $F$ of $\mathcal{R}_{d-1}$-bridges to $G'$ so that $G'+F$ becomes $(k-1)$-rigid. The edges in $F$ are $\mathcal{R}_k$-bridges as well, and hence part \textit{(a)} implies that $G'+F$ is $\mathcal{R}_{k}$-independent. As before, this implies       
      \begin{align}\label{eq:bound2}
            \notag |E'| = |E(G'+F)| - |F| &\leq (d+1)|V'|-\binom{d+2}{2} - |F| \\ \notag &= r_d(G'+F) + |V'| - d - 1 - |F| \\ \notag &= r_d(G') + |V'| - d - 1, 
      \end{align}
      as claimed. If equality holds, then $G'+F$ is $k$-rigid, and thus globally $(k-1)$-rigid by \cref{theorem:rigidgloballyrigid}. It follows from \cref{theorem:mconnected} that $F = \varnothing$. If $G'$ is minimally globally $(k-1)$-rigid, then $G'=K_{k+1}$ by \cref{theorem:minimallygloballyrigid}. Hence we may assume that  $G'-e$ is globally $(k-1)$-rigid for some $e\in E'$. Since every globally $(k-1)$-rigid graph is $k$-connected, it follows
      that $\kappa(u,v;G-uv)\geq \kappa(u,v;G'-uv)\geq k$, where $e=uv$, which contradicts the minimality of $G$. This completes the proof.      
\end{proof}

\cref{theorem:mainstronger2} shows that Mader's bound on the size of a minimally $k$-connected graph $G$ holds for all subgraphs $H$ of $G$ of order at least $k$. It is natural to ask whether Mader's stronger bound for sufficiently large graphs can also
be extended to subgraphs. This question was recently answered affirmatively by Lou and He~\cite[Theorem 2.2]{lou.he_2025}.

\begin{theorem}\label{lh}
      Let $G$ be a minimally $k$-connected graph. Then, for all subgraphs $G' = (V',E')$ of $G$ with $|V'|=h\geq 5k-4$, we have
      $E' \leq k(h-k)$, with equality if and only if $G' = K_{k,h-k}$.
\end{theorem}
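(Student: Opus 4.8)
The plan is to prove \cref{lh} by vertex-removal induction on $h$, built on the following structural fact of Mader~\cite{mader_1972a}: in a minimally $k$-connected graph $G$ every cycle contains a vertex of degree $k$, so the set $H$ of \emph{heavy} vertices (those of degree $>k$ in $G$) induces a forest in $G$. Call the remaining vertices \emph{light}; each has degree exactly $k$ in $G$, hence degree at most $k$ in every subgraph. First I would record the immediate consequence that every subgraph $G'$ of $G$ has a vertex of degree at most $k$: otherwise all vertices of $G'$ would be heavy, making $G'$ a subgraph of the forest $G[H]$ and forcing a vertex of degree at most $1$, a contradiction. Granting a base case at $h=5k-4$, the inductive step is then clean: choose $v\in V'$ with $\deg_{G'}(v)\le k$, apply the inductive hypothesis to $G'-v$ (which has $h-1\ge 5k-4$ vertices) to get $|E(G'-v)|\le k(h-1-k)$, and conclude $|E'|\le k(h-1-k)+k=k(h-k)$. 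The case $k=1$ (forests) is trivial and handled separately.

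For the quantitative heart of the base case I would split on the number $b=|V'\cap H|$ of heavy vertices. Writing $a=h-b$ and letting $e_L,e_{LH},e_H$ count light-light, light-heavy, and heavy-heavy edges of $G'$, the light-degree bound $2e_L+e_{LH}\le ka$ together with the forest bound $e_H\le b-1$ yields $|E'|\le ka-e_L+e_H\le (k-1)a+h-1$. A short computation shows this is at most $k(h-k)$ precisely when $b\ge k+1$, with strict inequality unless everything is tight; so the regime $b\ge k+1$ is essentially free. The difficulty is concentrated in the opposite regime $b\le k$, where the forest bound is too weak: when $b=k$ one has $ka=k(h-k)$ exactly, and the desired estimate reduces to the clean sub-claim $e_H\le e_L$ (the few heavy-heavy edges are dominated by the light-light edges). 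Proving this is where one must use more than the forest property, namely the minimality of $G$: every edge $uw$ of a minimally $k$-connected graph is critical, so $G-uw$ has a separator of size $k-1$ separating $u$ from $w$; applying this to an edge inside $H$, and using that the $a\ge 4k-4$ light vertices of $V'$ supply enough internally disjoint paths, should force a $k$-connectivity violation. I expect the threshold $h\ge 5k-4$ to emerge exactly from guaranteeing enough light vertices for this argument.

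For the equality case I would run the same induction. If $|E'|=k(h-k)$ then necessarily $\deg_{G'}(v)=k$ and, by the inductive characterisation, $G'-v=K_{k,h-1-k}$ with small side $X$ ($|X|=k$) and large side $Y$. Since $h\ge 2k+2$, every vertex of $X$ is heavy, so $X$ lies in the heavy forest of $G$; consequently at most one vertex of $Y$ can be heavy, since two heavy vertices of $Y$ would both be joined to all of $X$ and produce a $4$-cycle among heavy vertices. Any $Y$-neighbour of $v$ acquires degree $k+1$ and hence becomes heavy, so $v$ has at most one neighbour in $Y$; moreover, if $v$ were heavy, a triangle on $v$, one of its $X$-neighbours, and that $Y$-neighbour would again violate the forest property. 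Thus $v$ is light with at most one neighbour in $Y$, and it remains only to exclude the configuration in which $v$ is light and does attach to a vertex of $Y$. To rule this out I would invoke edge-criticality at the edge from $v$ into $Y$, deriving a contradiction with the $k$-connectivity of $G$; forcing $N_{G'}(v)=X$ then gives $G'=K_{k,h-k}$.

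The step I expect to be the main obstacle is the quantitative base case, equivalently the sub-claim $e_H\le e_L$ in the regime $b\le k$ (and the closely related final exclusion in the equality analysis). The forest structure of the heavy vertices handles the heavy-rich regime for free but gives nothing when there are at most $k$ heavy vertices; there one must convert the abundance of degree-$k$ vertices guaranteed by $h\ge 5k-4$, together with Mader's edge-criticality, into a violation of $k$-connectivity. Pinning down the exact dependence of the vertex threshold on $k$ is where the real work lies, and is presumably the technical core of Lou and He's argument.
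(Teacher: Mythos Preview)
The paper does not prove \cref{lh}: it is quoted verbatim as \cite[Theorem 2.2]{lou.he_2025} and used only to observe that Mader's sharper bound extends to subgraphs. There is therefore no proof in the paper to compare your proposal against.

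Regarding your outline on its own merits: the inductive step is correct and standard, and your observation that every subgraph of $G$ contains a vertex of $G'$-degree at most $k$ (because the heavy vertices induce a forest) is exactly the right engine for it. The case split on $b=|V'\cap H|$ is natural, and the calculation showing that $b\ge k+1$ already yields $|E'|\le k(h-k)$ is clean and correct.

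But the proposal is not a proof, and you say so yourself. The entire difficulty sits in the regime $b\le k$, where the forest bound contributes nothing and you are reduced to the sub-claim $e_H\le e_L$ (and its analogues for $b<k$). Your suggested mechanism---apply edge-criticality to a heavy--heavy edge and use the many light vertices of $V'$ to manufacture $k$ internally disjoint paths across the resulting $(k-1)$-separator---is only a heuristic: the light vertices of $V'$ lie in $G'$, not necessarily usefully in $G$, and there is no reason a priori that they route across a separator of $G$ in the way you need. The same vagueness recurs in the last step of your equality analysis, where you must exclude a light $v$ with a single neighbour in $Y$ and again appeal to an unspecified edge-criticality argument. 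Since you explicitly flag both points as ``the main obstacle'' and ``presumably the technical core of Lou and He's argument'', what you have is a plausible strategy with the hard part left open, not a proof of \cref{lh}.
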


\cref{lh} verifies \cref{conjecture:stressindependentsharp} for the special case when $d = k-1$ and $G$ is a subgraph of a minimally $k$-connected graph. 
The lower bound on $h$ in \cref{lh} is not best possible. We believe that it can be replaced by $h\geq 3k-2$, for all $k\geq 1$. This is obvious 
for $k=1$, and for $k=2$ it is implied by the $d=1$ case of \cref{theorem:mainstronger} by using the fact that $2$-connectivity and ${\cal R}_1$-connectivity are the same.
The $k=3$ case can be shown by carefully following the proof of \cref{theorem:2dsharpbound} to verify that the graph obtained by gluing together two copies of $K_4$ at a vertex and adding an edge is the unique 2-stress-independent graph with 7 vertices and 13 edges,  and noting that this graph is not $3$-connected. We omit the details.

We close by stating the specialisation of \cref{theorem:2dsharpboundstrong} to minimally $3$-connected graphs.

\begin{theorem}\label{theorem:minimally3connectedsharp}
      Let $G$ be a minimally $3$-connected graph. For all subgraphs $G'=(V',E')$  of $G$ with  $|V'|\geq 8$, we have 
      $|E'|\leq r_2(G') + |V'| - 6 \leq 3(|V'| - 3).$  
\end{theorem}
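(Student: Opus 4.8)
The plan is to recognise this statement as nothing more than the specialisation of \cref{theorem:2dsharpboundstrong} to the class of minimally $3$-connected graphs, so the entire argument reduces to checking that minimally $3$-connected graphs are covered by the hypotheses of that theorem. Concretely, I would first invoke \cref{cor:d-stress} with $d=2$: it asserts that every minimally $(d+1)$-connected graph is $d$-stress-independent, and hence every minimally $3$-connected graph $G$ is $2$-stress-independent. This is the one substantive input, and it is already established earlier in the paper, so no new work is required here.

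With $G$ identified as a $2$-stress-independent graph, I would then apply \cref{theorem:2dsharpboundstrong} verbatim: for every subgraph $G' = (V',E')$ of $G$ with $|V'| \geq 8$ we obtain $|E'| \leq r_2(G') + |V'| - 6 \leq 3|V'| - 9$. The final step is the trivial algebraic rewriting $3|V'| - 9 = 3(|V'| - 3)$, which puts the bound into the form claimed in the statement. Since the hypothesis $|V'| \geq 8$ and the vertex-minimality conventions match exactly between the two results, there is nothing further to verify about the applicability.

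I do not expect any genuine obstacle in this argument. The only conceptual content is the passage from minimal $3$-connectivity to $2$-stress-independence, and that implication is supplied by \cref{cor:d-stress} (which in turn rests on \cref{theorem:stresslinked} via the $d$-linked/stress-linked machinery developed in \cref{subsection:stresslinked}). Everything downstream is a direct citation of \cref{theorem:2dsharpboundstrong} together with elementary arithmetic, so the proof is essentially a one-line deduction and I would present it as such.
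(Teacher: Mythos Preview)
Your proposal is correct and matches the paper's own treatment exactly: the paper presents \cref{theorem:minimally3connectedsharp} simply as the specialisation of \cref{theorem:2dsharpboundstrong} to minimally $3$-connected graphs, relying on \cref{cor:d-stress} (with $d=2$) to supply the $2$-stress-independence hypothesis. There is no further argument in the paper, so your one-line deduction is precisely what is intended.
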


Compared to the $k=3$ case of \cref{lh}, \cref{theorem:minimally3connectedsharp} gives a slightly better bound on the number of edges if $r_2(G')$ is low, or equivalently, if the generic realisations of $G'$ in $\R^2$ have a high degree of freedom.

\subsection*{Acknowledgements}

This work started at the NII Shonan Meeting "Theory and Algorithms in Graph Rigidity and Algebraic Statistics" in 2024. 
This material is also based upon work supported by the National Science Foundation under Grant No. DMS-1929284 while the first and second authors were in residence at the Institute for Computational and Experimental Research in Mathematics in Providence, RI, during the "Geometry of Materials, Packings and Rigid Frameworks" semester program.
The third author was supported by the
MTA-ELTE Momentum Matroid Optimization Research Group and the National Research, Development and Innovation Fund of Hungary, financed under the ELTE TKP 2021-NKTA-62 funding scheme.

\printbibliography

\end{document}